\tikzstyle{doublearr}=[latex-latex,red, line width=0.5pt]
\tikzstyle{doublearr2}=[latex-latex,green!80!black, line width=0.5pt]
\newcommand{\bld}[1]{\boldsymbol{#1}}
\newcommand{\qq}{\boldsymbol{q}}
\newcommand{\dd}{\boldsymbol{\delta}}
\newcommand{\bx}{\bar{x}}
\newcommand{\ba}{\bar{a}}
\newcommand{\bd}{\bar{d}}
\newcommand{\xx}{\mathbf{x}}
\newcommand{\XX}{\mathbf{X}}
\newcommand{\expn}{\mathrm{Exp}}
\newcommand{\cN}{\mathcal{N}}
\newcommand{\cF}{\mathcal{F}}
\newcommand{\cK}{\mathcal{K}}
\newcommand{\cM}{\mathcal{M}}
\newcommand{\cC}{\mathcal{C}}
\newcommand{\cX}{\mathcal{X}}
\newcommand{\htheta}{\hat{\theta}}
\newcommand{\ha}{\hat{a}}
\newcommand{\dif}{\ensuremath{\mbox{d}}}  
\newcommand{\e}{\mathrm{e}}
\newcommand{\pto}{\ensuremath{\xrightarrow{\mathbbm{P}}}}  
\newcommand{\expt}{\mathbb{E}}
\newcommand\ind[1]{\ensuremath{\mathbbm{1}_{\left[#1\right]}}} 
\newcommand\Pro[1]{\mathbbm{P}\left(#1\right)}  
\newcommand{\prob}{\mathbb{P}}
\newcommand\indn[1]{\ensuremath{\mathbbm{1}_{#1}}} 
\newtheorem{theorem}{Theorem}
\newtheorem{lemma}[theorem]{Lemma}
\newtheorem{proposition}[theorem]{Proposition}
\newtheorem{remark}{Remark}
\newtheorem{claim}{Claim}
\newtheorem{implication}{Implication}
\let\plainqed\qedsymbol
\newcommand{\claimqed}{$\lrcorner$}
\tikzstyle{mybox} = [draw=red, fill=yellow!20, thick, minimum height=.4cm,
\tikzstyle{fancytitle} =[fill=blue, text=white]
\numberwithin{equation}{section}
\numberwithin{theorem}{section}
\tikzstyle{mybox} = [draw=black, thick, minimum height=.6cm,
\tikzstyle{fancytitle} =[fill=blue, text=white]
\begin{document}

\title{Join-Idle-Queue with Service Elasticity: \\Large-Scale Asymptotics of a Non-monotone System}

\author[1]{Debankur Mukherjee\footnote{Email: \texttt{d.mukherjee@tue.nl}}}
\author[2]{Alexander Stolyar}
\affil[1]{Eindhoven University of Technology, The Netherlands}
\affil[2]{University of Illinois at Urbana-Champaign}

\renewcommand\Authands{, }

\date{\today}

\maketitle 

\begin{abstract}
We consider the model of a token-based joint auto-scaling and load balancing strategy, proposed in a recent paper by Mukherjee, Dhara,
 Borst, and van Leeuwaarden~\cite{MDBL17} (SIGMETRICS~'17), which offers an efficient scalable implementation and yet achieves asymptotically optimal steady-state delay performance and energy consumption as the number of servers $N\to\infty$.
In the above work, the asymptotic results are obtained \emph{under the assumption that the queues have fixed-size finite buffers}, and therefore the fundamental question of stability of the proposed scheme with infinite buffers was left open.
In this paper, we address this fundamental stability question.
The system stability under the usual subcritical load assumption is not automatic.
Moreover, the stability may \emph{ not} even hold for all $N$. 
The key challenge stems from the fact that the process \emph{lacks monotonicity}, which has been the powerful primary tool for establishing stability in load balancing models. 
We develop a novel method to prove that the subcritically loaded system is stable for \emph{large enough}~$N$, and establish convergence of steady-state distributions to the optimal one, as $N \to \infty$. 
The method goes beyond the state of the art techniques -- it uses an induction-based idea and a ``weak monotonicity'' property of the model; this technique is of independent interest and may have broader applicability.
\end{abstract}

\section{Introduction}
\noindent
{\bf Background and motivation.}
Load balancing and auto-scaling are two principal pillars in modern-day data centers and cloud networks, and therefore, have gained renewed interest in past two decades. 
In its basic setup, a large-scale system consists of a pool of large number of servers and a single dispatcher, where tasks arrive sequentially. 
Each task has to be instantaneously assigned to some server or discarded. 
Load balancing algorithms primarily concern design and analysis of algorithms to distribute incoming tasks among the servers as evenly as possible, while using minimal instantaneous queue length information.
At the same time, a big proportion of the tasks processed by these data centers come with  business-critical performance requirements.
This forces service providers to increase their capacity at a tremendous rate to cope up with the high-demand period in the presence of a time-varying demand pattern.
Consequently, the energy consumption by the servers in these huge data centers has risen dramatically and become a dominant factor in managing data center operations
and cloud infrastructure platforms. 
Auto-scaling provides a popular paradigm for automatically adjusting service capacity in response to demand while meeting performance targets.\\

\noindent
\emph{Load balancing in large systems.} The study of load balancing schemes in large-scale systems have a very rich history, and for decades, a lot of research have been conducted in understanding the fundamental trade-off between delay-performance and communication overhead per task.
The so-called `power-of-d' schemes, where each arrival is assigned to the shortest among $d$ randomly chosen queues, provide surprising improvements for $d\geq 2$ over purely random routing ($d=1$) while maintaining the communication overhead as low as~$d$ per task.
This scheme along with its many variations have been studied extensively in \cite{VDK96,Mitzenmacher01,BLP13,BLP12,AR17, EG16, MBLW16-3, Ying17} and many more.
Relatively recently, join-the-idle queue (JIQ) scheme was proposed in~\cite{LXKGLG11}, where an arriving task is assigned to an idle server (if any), or in case all servers are busy, it is assigned to some queue uniformly at random.
The JIQ scheme has a low-cost token-based implementation that involves only $O(1)$ communication overhead per task.
Large-scale asymptotic results in \cite{Stolyar15,Stolyar17} show that
under Markovian assumptions, the JIQ policy achieves a zero
probability of wait for any fixed subcritical load per server
in a regime where the total number of servers grows large.
It should be noted that the results in~\cite{Stolyar15,Stolyar17} even hold for considerably more general scenarios, viz.~decreasing hazard rate service time distributions and heterogeneous servers pools.
Recently, it is further shown~\cite{FS17} that when the average load per server $\lambda<1/2$, the  large-scale asymptotic optimality of JIQ is preserved even under completely general service time distributions.
Results in~\cite{MBLW16-1} indicate that under Markovian assumptions, the JIQ policy has the
same diffusion-limit as the Join-the-Shortest-Queue (JSQ)
strategy, and thus achieves diffusion-level optimality.
These results show that the JIQ policy provides asymptotically
optimal delay performance in large-scale systems, while only involving minimal
communication overhead (at most one message per task on average).
We refer to~\cite{BBLM18} for a recent survey on load balancing schemes.\\

\noindent
\emph{Auto-scaling with a centralized queue.}
Queue-driven auto-scaling techniques have been widely
investigated in the literature~\cite{ALW10, GDHS13, LCBWGWMH12, LLWLA11a, LLWLA11b, LLWA12, LWAT13, PP16, UKIN10, WLT12}.
In systems with a centralized queue it is very common to put servers to `sleep' while the demand is low, since servers in sleep mode consume much less energy than active servers.
Under Markovian assumptions, the behavior of these mechanisms can
be described in terms of various incarnations of M/M/N queues
with setup times.
There are several further recent papers which examine on-demand server addition/removal in a somewhat different vein~\cite{PS16, NS16}. 
 Generalizations towards non-stationary arrivals and impatience effects have also been considered recently~\cite{PP16}.
Unfortunately, data centers and cloud networks with millions of servers are too complex to maintain any centralized queue, and it involves prohibitively high communication burden to obtain instantaneous system information even for a small fraction of servers.\\

\noindent
\emph{Joint load balancing and auto-scaling in distributed systems.}
Motivated by all the above, a token-based joint load balancing and auto scaling scheme called TABS was proposed in~\cite{MDBL17}, that offers an efficient scalable implementation and yet achieves asymptotically optimal steady-state delay performance and energy consumption as the number of servers $N\to\infty$.
In~\cite{MDBL17}, the authors left open a fundamental question: Is the system with a given number $N$ of servers stable under TABS scheme?
The analysis in~\cite{MDBL17} bypasses the issue of stability by assuming that each server in the system has a finite buffer capacity.
Thus, it remains an important open challenge to understand the stability property of the TABS scheme without the finite-buffer restriction. \\

\noindent
{\bf Key contributions and our approach.}
In this paper we address the stability issue for systems under the TABS scheme without the assumption of finite buffers, and examine the asymptotic behavior of the system as $N$ becomes large.
Analyzing the stability of the TABS scheme in the infinite buffer scenario poses a significant challenge, because the stability of the finite-$N$ system, i.e., the system with finite number $N$ of servers under the usual subcritical load assumption is not automatic.
In fact, as we will further discuss in Remark~\ref{rem:instability} below in detail, even under subcritical load, the system may \emph{not} be stable for all $N$.
Our first main result is that for any fixed subcritical load, the system is stable for \emph{large enough} $N$.
Further, using this large-$N$ stability result in combination with mean-field analysis, we establish convergence of the sequence of steady-state distributions as~$N\to\infty$.

The key challenge in showing large-$N$ stability for systems under the TABS scheme stems from the fact that the occupancy state process lacks monotonicity.
It is well-known that monotonicity is a powerful primary tool for establishing stability of load balancing models~\cite{Stolyar15,Stolyar17, VDK96, BLP12}.
In fact, process monotonicity is used extensively not only for stability analysis and not only in queueing literature -- for example,
many interacting-particle-systems' results rely crucially on monotonicity; see e.g.~\cite{L85}. The lack of monotonicity immediately complicates the situation, as for example in ~\cite{FS17, SS17}.
Specifically, when the service time distribution is general, it is the lack of monotonicity that has left open the stability questions for the power-of-d scheme when system load $\lambda>1/4$~\cite{BLP12}, and the JIQ scheme when $\lambda>1/2$~\cite{FS17}.
We develop a novel method for proving \emph{large-$N$ stability} for subcritically loaded systems, and using that we establish the convergence of the sequence of steady-state distributions as $N\to\infty$.
Our method uses an induction-based idea, and relies on a ``weak monotonicity'' property of the model, as further detailed below.
To the best of our knowledge, this is the first time both the \emph{traditional fluid limit} (in the sense of large starting state) and the \emph{mean-field fluid limit} (when the number of servers grows large) are used in an intricate manner to obtain large-$N$ stability results.

To establish the large-$N$ stability, we actually prove a stronger statement.
We consider an artificial system, where some of the queues are infinite at all times. 
Then, loosely speaking, we prove that the following holds for all sufficiently large $N$:
\emph{If the system with $N$ servers contains $k$ servers with infinite queue lengths, $0\leq k\leq N$, then (i)~The subsystem consisting of the remaining (i.e., finite) queues is stable, and 
(ii)~When this subsystem is at steady state, the average rate at which tasks join the infinite queues is strictly smaller than that at which tasks depart from them.}
Note that the case $k=0$ corresponds to the desired stability result.

The use of backward induction in $k$ facilitates proving the above statement.
For a fixed $N$, first we introduce the notion of a fluid sample path (FSP) for systems where some queues might be infinite.
The base case of the backward induction is when $k=N$, and assuming the statement for $k$, we show that it holds for $k-1$.
We use the classical fluid-stability argument (as in~\cite{RS92, S95, D99}) in order to establish stability for the system where the number of infinite queues is $k-1$.
As mentioned above, here the notion of the traditional FSP is needed to be suitably extended to fit to the systems where some servers have infinite queue lengths.
Loosely speaking, for the fluid-stability, the `large queues' behave as `infinite queues' for which induction statement provides us with the drift estimates.
Also, to calculate the drift of a queue in the fluid limit for fixed but \emph{large enough $N$}, we use the mean-field analysis.
A more detailed heuristic roadmap of the above proof argument is presented in Subsection~\ref{ssec:roadmap}.
This technique is of independent interest, and potentially has a much broader applicability in proving large-$N$ stability for non-monotone systems, where the state-of-the-art results have remained scarce so far.
\\


\noindent
{\bf Organization of the paper.}
The rest of the paper is organized as follows.
In Section~\ref{sec:model} we present a detailed model description,  state the main results, and provide their ramifications along with discussions of several proof heuristics.
The full proof of the main results are deferred till Section~\ref{sec:proofs}. 
Section~\ref{sec:ind} introduces an inductive approach to prove large-$N$ stability result.
We present the proof of the large-scale system (when $N\to\infty$) using mean-field analysis in Section~\ref{sec:mf}.
Finally, we make a few brief concluding remarks in Section~\ref{sec:con}.

\section{Model description and main result}\label{sec:model}

In this section, first we will describe the system and the TABS scheme in detail, and then state the main results and discuss their ramifications.

Consider a system of $N$~parallel queues with identical servers and a single dispatcher. 
Tasks with unit-mean exponentially distributed service requirements arrive as a Poisson process of rate $\lambda N$  with $\lambda<1$.
Incoming tasks cannot be queued at the dispatcher, and must immediately and irrevocably be forwarded to one of the servers where they can be queued.
Each server has an infinite buffer capacity. 
The service discipline at each server is oblivious to the actual service requirements (e.g., FCFS).
A turned-off server takes an Exponentially distributed time with mean $1/\nu$ (to be henceforth denoted as Exp$(\nu)$) time (setup period) to be turned on.
We now describe the token-based joint auto-scaling and load balancing scheme called TABS (Token-based Auto Balance Scaling), as introduced in~\cite{MDBL17}.\\

\noindent
{\bf TABS scheme~\cite{MDBL17}.}
\begin{itemize}
\item When a server becomes idle, it sends a `green' message to the dispatcher, waits for an $\expn(\mu)$ time (standby period), and turns itself off by sending a `red' message to the dispatcher (the corresponding green message is destroyed).
\item When a task arrives, the dispatcher selects a green message at random if there are any, and assigns the task to the corresponding server (the corresponding green message is replaced by a `yellow' message). 
Otherwise, the task is assigned to an arbitrary busy server (and is lost if there is none), and if at that arrival epoch there is a red message at the dispatcher, then it selects one at random, and the setup procedure of the corresponding server is initiated, replacing its red message by an `orange' message.
Setup procedure takes $\expn(\nu)$ time after which the server becomes active. 
\item Any server which activates due to the latter event, sends a green message to the dispatcher (the corresponding orange message is replaced), waits for an $\expn(\mu)$ time for a possible assignment of a task, and again turns itself off by sending a red message to the dispatcher.
\end{itemize}
\begin{figure}
\begin{center}
\includegraphics[scale=1]{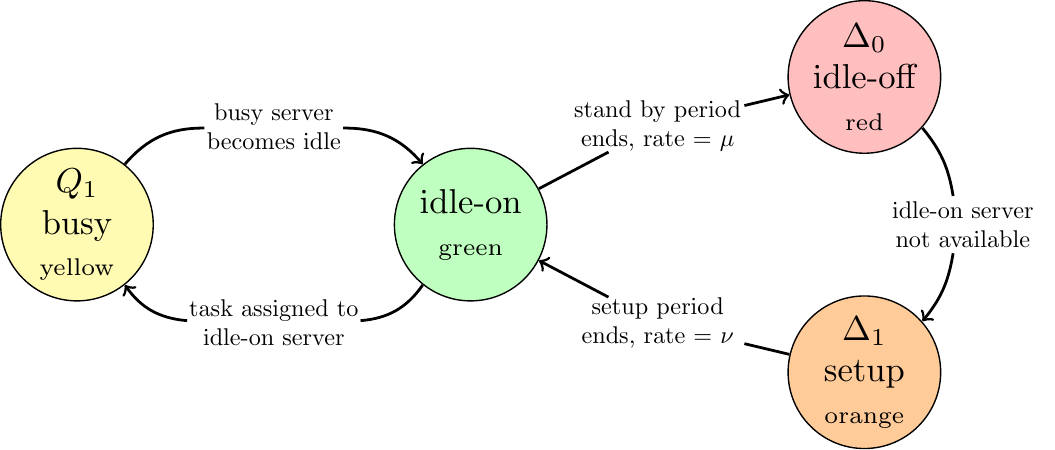}
\end{center}
\caption{Illustration of server on-off decision rules in the TABS scheme, along with message colors and state variables as given in~\cite{MDBL17}.}
\label{fig:scheme}
\end{figure}
As described in~\cite{MDBL17}, the TABS scheme gives rise to a distributed operation in which  servers are in one of four states (busy, idle-on, idle-off, or standby), and advertize their state to the dispatcher via exchange of tokens. Figure~\ref{fig:scheme} illustrates this token-based exchange protocol. 
Note that setup procedures are never aborted and continued even when idle-on servers do become available.  \\

\noindent
{\bf Notation.}
For the system with $N$ servers, let $X_j^N(t)$ denote the queue length of server $j$ at time~$t$, $j=1,2,\ldots, N$, and
 $\mathbf{Q}^N(t) := (Q_1^N(t), Q_2^N(t), \dots )$ denote the system occupancy state, where $Q_i^N(t)$ is the number of servers with queue length greater than or equal to $i$ at time $t$, including the possible task in service, $i=1,2,\ldots$.
Also, let $\Delta_0^N(t)$ and $\Delta_1^N(t)$ denote the number of idle-off servers and servers  in setup mode at time $t$, respectively. 
Note that the process $(\mathbf{Q}^N(t),\Delta_0^N(t),\Delta_1^N(t))_{t\geq 0}$ provides a Markovian state description by virtue of the exchangeablity of the servers.
It is easy to see that, for any fixed $N$, this process is an irreducible countable-state Markov chain. 
Therefore, its positive recurrence, which we refer to as \emph{stability}, is equivalent to ergodicity and to the existence of unique stationary distribution. 
Further, let  $U^N(t)$ denote the number of idle-on servers at time $t$.
We will focus upon an asymptotic analysis, where the task arrival rate and the number of servers grow large in proportion.
The \emph{mean-field fluid-scaled} quantities are denoted by the respective small letters, viz.~$q_i^{N}(t):=Q_i^{N}(t)/N$, $\delta_0^N(t) = \Delta_0^N(t)/N$, $\delta_1^N(t) = \Delta_1^N(t)/N$, and $u^N(t):= U^N(t)/N$. 
Notation for the \emph{conventional fluid-scaled} occupancy states for a fixed~$N$ will be introduced later in Subsection~\ref{ssec:fluid}.
For brevity in notation, we will write $\mathbf{q}^N(t) = (q_1^N(t), q_2^N(t), \dots)$ and $\bld{\delta}^N(t) = (\delta_0^N(t),\delta_1^N(t))$. 
Let
$$
E = \Big\{(\bld{q},\dd)\in [0,1]^\infty:  q_i\geq q_{i+1},\ \forall i, \  \delta_0+\delta_1+ q_1\leq 1 \Big\},
$$
denote the space of all mean-field fluid-scaled occupancy states,
so that  $(\mathbf{q}^N(t),\bld{\delta}^N(t))$ takes value in $E$ for all $t$.
Endow $E$ with the product topology, and the Borel $\sigma$-algebra $\mathcal{E}$, generated by the open sets of $E$.
For any complete separable metric space $E$, denote by $D_E[0,\infty)$, the set of all $E$-valued \emph{c\`adl\`ag} (right continuous with left limit exists) processes.
By the symbol `$\pto$' we denote convergence in probability for real-valued random variables.\\

We now present our first main result which states that for any fixed choice of the parameters, a sub-critically loaded system under TABS scheme is stable for large enough $N$. 
\begin{theorem}\label{th:stab}
For any fixed $\mu$, $\nu>0,$ and $\lambda<1$, the system with $N$ servers under the TABS scheme is stable (positive recurrent) for large enough $N$. 
\end{theorem}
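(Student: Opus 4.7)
The plan is to prove a stronger statement by backward induction on the number $k$ of queues that are \emph{artificially pinned at infinity}. Concretely, I consider a modification of the $N$-server TABS system in which $k$ designated queues are held at infinite length at all times---so they are permanently busy and produce service completions at unit rate each, while still participating in the token protocol---and the remaining $N-k$ queues evolve according to the usual TABS dynamics, receiving arrivals and issuing tokens as prescribed. For each $0\le k\le N$, I aim to establish the hypothesis $H(k)$: for all sufficiently large $N$, (i) the subsystem of $N-k$ finite queues is positive recurrent, and (ii) when it is in its stationary regime, the long-run average rate at which tasks are routed to the $k$ infinite queues is strictly smaller than the long-run rate of service completions at them (the latter equalling $k$). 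The desired theorem is $H(0)$, for which (ii) is vacuous.

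The induction proceeds from $k=N$ downward. The base case $k=N$ reduces to a rate balance: with every queue infinite there are no idle-on or idle-off servers, every arrival is routed to a permanently busy queue, yielding arrival rate $\lambda N$ against departure rate $N$, with strict inequality since $\lambda<1$. For the inductive step, one assumes $H(k), H(k+1), \dots, H(N)$ and proves $H(k-1)$. Part (i) is obtained via the fluid-stability paradigm of Rybko--Stolyar and Dai, adapted to this \emph{hybrid} system: I introduce a notion of fluid sample path (FSP) for the process with $k-1$ pinned queues, obtained as a functional law-of-large-numbers limit of rescaled trajectories starting from large states (with $N$ fixed), and show that every FSP drains its finite-queue mass in bounded time. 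A standard Lyapunov argument (using, for instance, the $L^1$-norm of the fluid-scaled queue vector) then yields positive recurrence. The critical observation is that any finite queue carrying strictly positive fluid mass is always busy and hence locally mimics an infinite queue; in any fluid regime with $j\ge 1$ large finite queues, the instantaneous dynamics therefore coincide with those of a hybrid carrying $(k-1)+j\ge k$ effectively-infinite queues, and the previously established $H((k-1)+j)$ supplies a strictly negative drift for arrivals into the large queues, forcing the $L^1$-norm to decrease at a uniformly positive rate.

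To verify part (ii) of $H(k-1)$---the stationary rate inequality---I would use the mean-field limit as $N\to\infty$ with the number of pinned queues held at the fixed value $k-1$ (a vanishing fraction of $N$). The TABS mean-field fixed point under subcritical load concentrates mass on idle or lightly loaded servers, so asymptotically almost all arrivals are routed via green tokens, and the per-infinite-queue arrival rate tends to a quantity strictly below $1$; tightness of the stationary distributions implied by part (i) of $H(k-1)$ lifts this to the prelimit for all sufficiently large $N$. The principal obstacle is the construction and analysis of the hybrid FSP in the absence of monotonicity: since TABS admits no convenient dominating coupling, the FSP equations must be derived directly from the Poissonian driving processes while correctly tracking the full green/red/orange/yellow token exchange even in the presence of permanently-busy queues. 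Equally delicate is the tandem use of two distinct fluid limits---the traditional large-state fluid limit at fixed $N$ and the mean-field limit as $N\to\infty$---since the drift estimates underpinning the induction are established in one asymptotic regime and invoked in the other, and verifying their compatibility is where the technical heart of the argument will lie.
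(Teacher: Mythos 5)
Your overall strategy matches the paper's: prove a stronger statement by backward induction on the number $k$ of queues pinned at infinity, with stability of the finite subsystem established via fluid-limit arguments and the stationary rate bound established via mean-field analysis. The observation that positive finite queues ``locally mimic'' infinite queues is also the right key idea. However, the proposal has several genuine gaps in exactly the places where the paper has to work.

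First, your inductive hypothesis $H(k)$ consists only of (i) positive recurrence and (ii) a \emph{stationary} arrival-rate bound, yet you then invoke $H((k-1)+j)$ to ``supply a strictly negative drift'' for the large queues in a fluid sample path. A steady-state rate inequality does not, by itself, say anything about the instantaneous drift of an FSP coordinate at a regular time; passing from one to the other requires a time-scale separation argument (on the fluid time scale the small finite queues equilibrate instantaneously, and the number of arrivals to a large queue over a short fluid interval is governed by a renewal/SLLN argument over i.i.d.~excursion cycles). The paper bakes this into its induction hypothesis as a separate statement about FSP drifts and proves it with a dedicated renewal-cycle construction. Without an explicit FSP-drift clause in $H(k)$ and the accompanying separation argument, the fluid-stability step has a hole.

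Second, you assert that $k-1$ is ``a vanishing fraction of $N$.'' That cannot be taken for granted: for a fixed (large) $N$ the induction descends through \emph{all} $k\in\{N,N-1,\dots,0\}$, so the step you must verify is uniform over $k$, including values of order $N$. The correct formulation is to assume the inductive step fails for infinitely many $N$ at some $k(N)$ and pass to a subsequence with $k(N)/N\to\kappa\in[0,1]$. The picture you describe---mean-field fixed point concentrating on idle/lightly-loaded servers, green tokens absorbing almost all arrivals---is the $\kappa<1-\lambda$ regime only; when $\kappa\ge 1-\lambda$ the steady-state $q_1$ concentrates at $1$ and the per-infinite-queue arrival rate is controlled differently (roughly $\lambda N/Q_1\approx\lambda<1$). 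Your argument silently omits this regime.

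Third, even after establishing concentration of the stationary distribution, one must still bound the \emph{expected} per-infinite-queue arrival rate $\mathbbm{E}[\lambda N/Q_1^N(\infty)]$. Because the integrand is of order $N$ on the event that $Q_1^N(\infty)$ is small, a law-of-large-numbers--type concentration alone is insufficient: you additionally need an exponentially small bound on $\mathbbm{P}(q_1^N(\infty)<\varepsilon_1)$ to tame the $\lambda N$ prefactor. This is a separate lemma in the paper (proved via Cramér-type large deviations chained over a few short time intervals) and is indispensable; ``tightness implied by part (i)'' does not deliver it.
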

Theorem~\ref{th:stab} is proved in Section~\ref{sec:proofs}.

\begin{remark}\label{rem:instability}\normalfont
It is worthwhile to mention that the `large-$N$' stability as stated in Theorem~\ref{th:stab} above is the best one can hope for. 
In fact, for fixed $N$ and $\lambda$, there are values of the parameters $\mu$ and $\nu$ such that the system under the TABS scheme may not be stable.
To elaborate further on this point, consider a system with 2 servers A and B, and $1/2<\lambda<1$.
Let server A start with a large queue, while the initial queue length at server B be small.
In that case, observe that every time the queue length at server B hits 0, with positive probability, it turns idle-off before the next arrival epoch.
Once server B is idle-off, the arrival rate into server A becomes $2\lambda>1$. 
Thus, before server B turns idle-on again, the expected number of tasks that join server A is given by at least $2\lambda/\nu$, while the expected number of departures is $1/\nu$.
Thus the queue length at server A increases by $(2\lambda-1)/\nu$,
which can be very large if $\nu$ is small.
Further note that once server B becomes busy again, both servers receive an arrival rate $\lambda<1$, and hence it is more likely that server B will empty out again, repeating the above scenario.
The situation becomes better as $N$ increases. 
Indeed for large $N$, if `too many' servers are idle-off and `too many' tasks do not find an idle queue to join, the system starts producing servers in setup mode fast enough, and as a result, more and more servers start becoming busy.
The above heuristic has been illustrated in Figure~\ref{fig:instability} with examples of three scenarios with small, moderate, and large values of $N$, respectively.
\begin{figure}
\begin{center}$
\begin{array}{ccc}
\includegraphics[width=75mm]{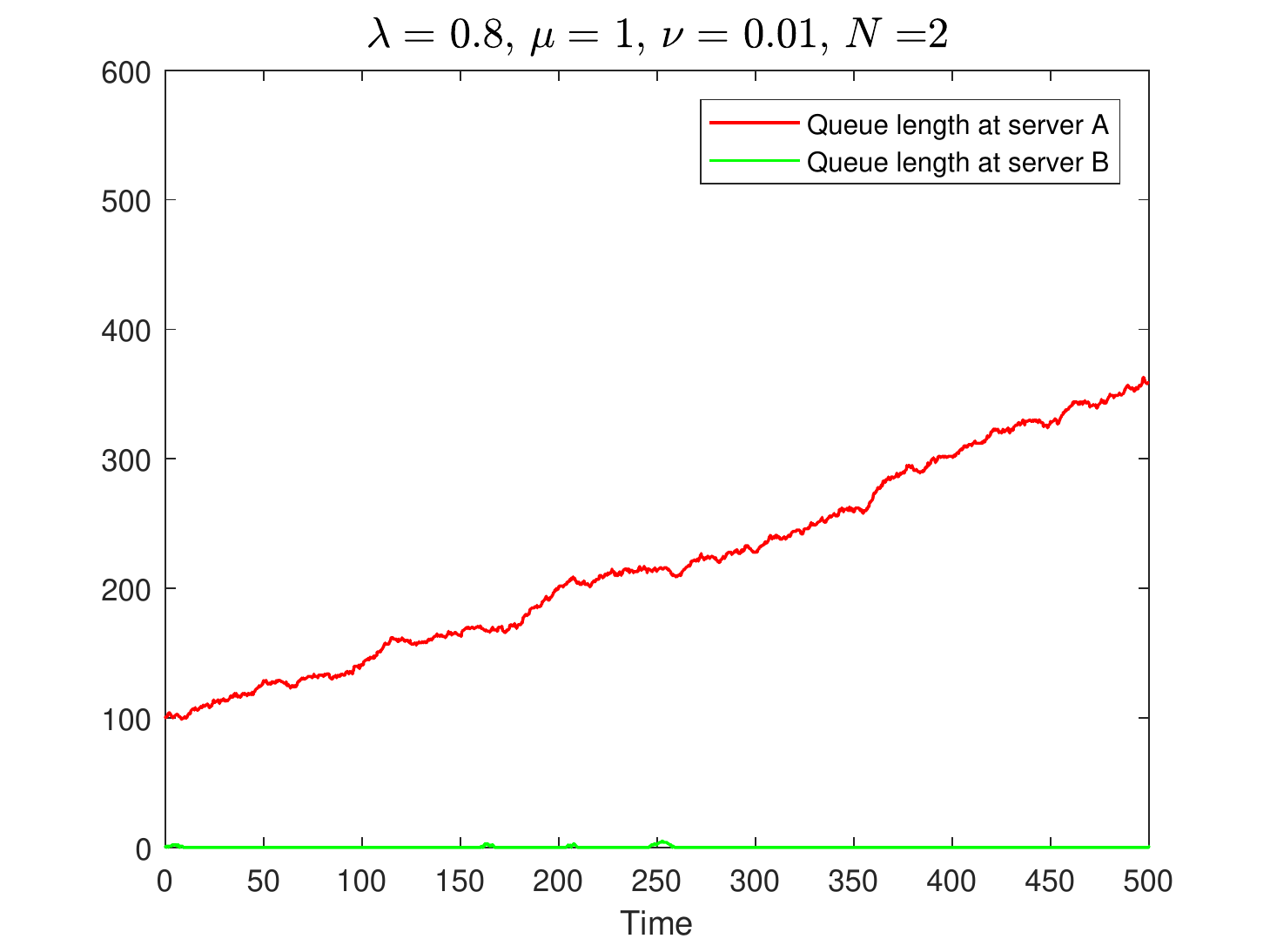}
\includegraphics[width=75mm]{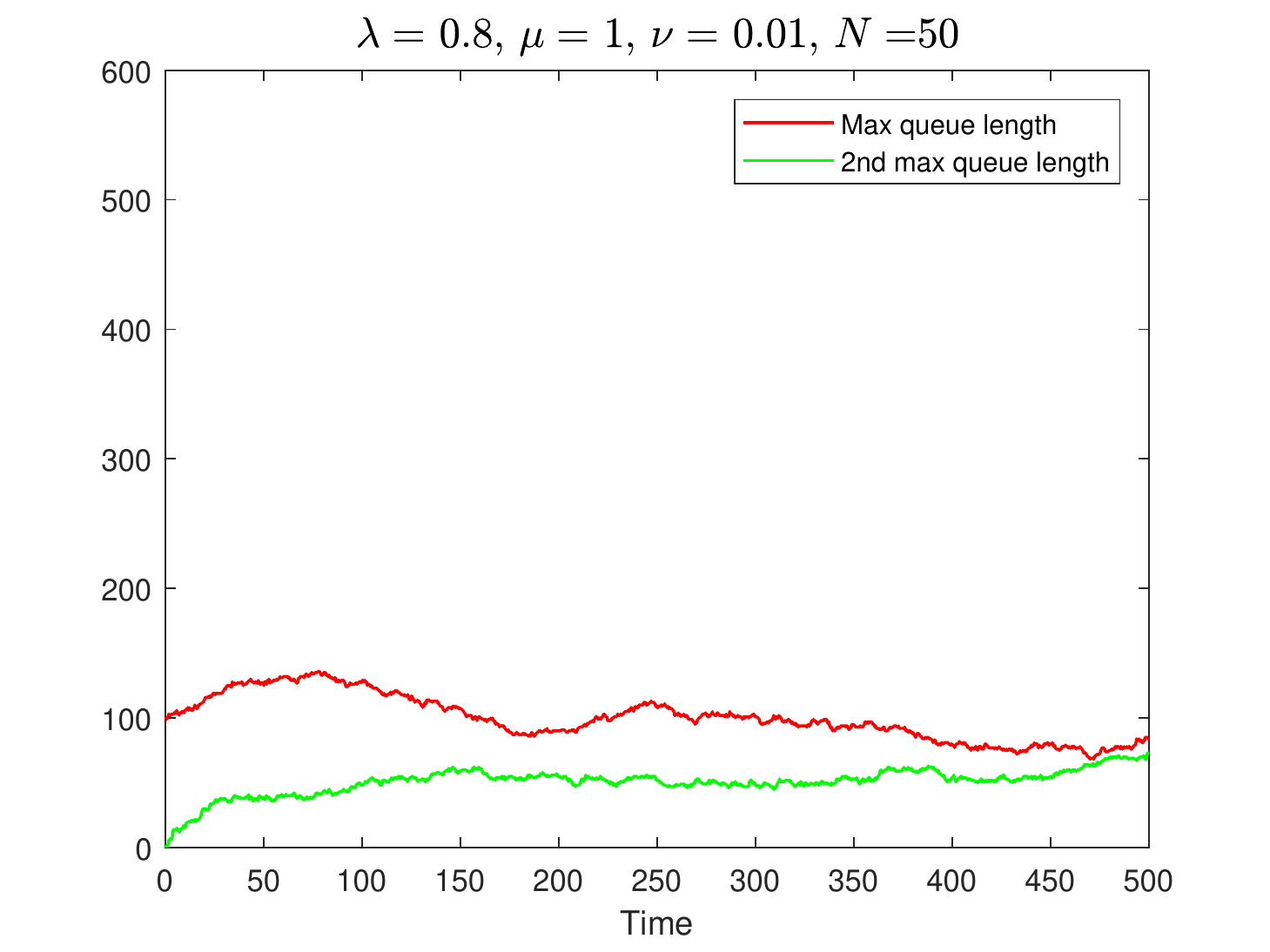}\\
\includegraphics[width=75mm]{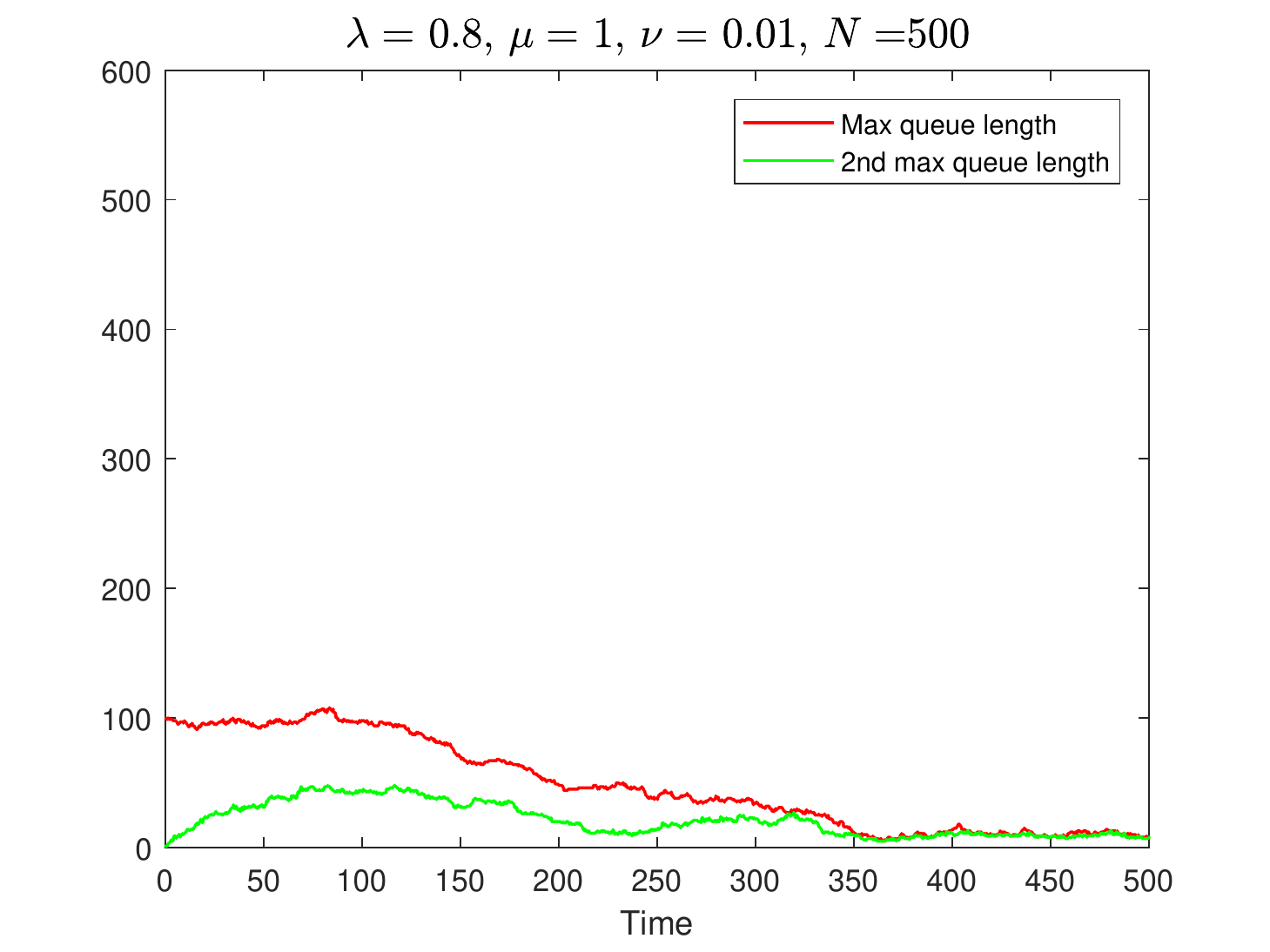}
\end{array}$
\end{center}
\caption{(Top left) Illustration of instability of the TABS scheme for $N=2$ via sample paths of the queue length process. (Top right) Sample paths of the maximum and second maximum queue length processes in an intermediate system ($N=50$) for the same parameter choices. (Bottom) The system becomes stable for a large enough system ($N=500$).}
\label{fig:instability}
\end{figure}
\end{remark}

In the next theorem we will identify the limit of the sequence of stationary distributions of the occupancy processes as $N\to\infty$.
In particular, we will establish that under sub-critical load, for any fixed $\mu$, $\nu>0$, the steady-state occupancy process converges weakly to the unique fixed point.
(For the finite buffer scenario this was proved in~\cite[Proposition 3.3]{MDBL17}.)
Denote by $\qq^N(\infty)$ and $\dd^N(\infty)$ the random values of $\qq^N(t)$ and $\dd^N(t)$ in the steady-state, respectively.
\begin{theorem}\label{th:steady-lim}
For any fixed $\mu$, $\nu>0$, and $\lambda<1$, the sequence of steady states
$(\qq^N(\infty), \dd^N(\infty))$ converges weakly to the fixed point $(\qq^\star, \dd^\star)$  as  $N\to\infty$,
where 
\[\delta_0^\star = 1-\lambda\quad\delta_1^\star = 0\quad q_1^\star = \lambda,\quad q_i^\star =0 \quad\mbox{for all}\quad i\geq 2.\]
\end{theorem}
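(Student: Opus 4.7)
The plan is the standard two-step argument: establish tightness of the sequence $(\qq^N(\infty), \dd^N(\infty))_{N \geq N_0}$ and identify any weak subsequential limit as the unique invariant measure of the mean-field flow. Theorem~\ref{th:stab} guarantees that the stationary distributions exist for all $N \geq N_0$. Tightness is automatic: the state space $E$ is a closed subset of $[0,1]^{\N} \times [0,1]^2$, hence compact in the product topology by Tychonoff, so any $E$-valued sequence of random variables is tight. It therefore suffices to identify the subsequential limits.

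Fix any subsequence $N_k \to \infty$ along which $(\qq^{N_k}(\infty), \dd^{N_k}(\infty))$ converges weakly to some $(\qq^\infty, \dd^\infty)$, and start the $N_k$-system in its stationary distribution. I would then establish functional convergence $(\qq^{N_k}(\cdot), \dd^{N_k}(\cdot)) \Rightarrow (\qq(\cdot), \dd(\cdot))$ in $D_E[0, T]$, for any $T > 0$, where $(\qq(t), \dd(t))$ is the deterministic trajectory solving the mean-field ODE with initial law $\mathcal{L}(\qq^\infty, \dd^\infty)$. The standard route --- martingale decomposition of the Poisson-driven jumps, $C$-tightness via Aldous's criterion applied coordinatewise, and identification of the limit through generator convergence --- should carry over from the finite-buffer mean-field analysis of~\cite{MDBL17} with only notational changes to accommodate the infinite-dimensional state. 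Since the prelimit marginal at time $t$ coincides with that at time $0$, the same holds in the limit, and $\mathcal{L}(\qq^\infty, \dd^\infty)$ is an invariant probability measure of the mean-field flow on $E$.

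The last step is to argue that the only such invariant measure is $\delta_{(\qq^\star, \dd^\star)}$. For any stationary mean-field solution, the total-mass balance $\tfrac{d}{dt}\sum_{i \geq 1} q_i(t) = \lambda - q_1(t)$ together with $q_1$ being time-invariant forces $q_1(t) \equiv \lambda$; the ODEs governing $\dd$ and the upper queue-length coordinates $q_i$, $i \geq 2$, then pin down $q_i \equiv 0$ for $i \geq 2$, $\delta_1 \equiv 0$, and $\delta_0 \equiv 1 - \lambda$, echoing the fixed-point uniqueness already established for the finite-buffer case in Proposition~3.3 of~\cite{MDBL17}.

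I expect the main obstacle to be precisely this last uniqueness step in the infinite-dimensional state $E$. For the finite-buffer system the state space is finite-dimensional and compact, and a global stability argument rules out spurious invariant measures cleanly. With infinite buffers, one cannot a priori exclude an invariant measure supported on configurations with persistent tail mass $q_i > 0$ for arbitrarily large $i$; ruling this out requires quantitative downward-drift control of the upper coordinates, typically through a Lyapunov-type argument exploiting the subcriticality $\lambda < 1$. Once this is accomplished, Theorem~\ref{th:stab} plays its indispensable role of guaranteeing the existence of the stationary distributions whose limit the above program can then pin down.
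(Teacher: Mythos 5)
Your overall framing---tightness by compactness of $E$, pass to a convergent subsequence, identify the limit through the mean-field dynamics---is in the right spirit. But the decisive step, which you flag as the main obstacle, is left open, and it is more subtle than your sketch suggests. The mean-field dynamics of Proposition~\ref{prop:mf} are degenerate at $q_1 = 0$ (the assignment fractions $p_i$ involve $q_1^{-1}$); before one can use them to constrain a weak limit of stationary distributions, one needs Lemma~\ref{lem:expo-bound}, an exponential concentration estimate guaranteeing that the limit assigns no mass near $q_1 = 0$, and your proposal does not address this. Moreover, your flow-conservation argument for $q_1 \equiv \lambda$ implicitly assumes that the limiting invariant measure is supported on configurations with $\sum_i q_i < \infty$; compactness of $E$ in the product topology gives nothing of the sort, and establishing this finiteness is essentially the same problem as ruling out persistent tail mass, which is precisely the gap.

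The paper does not follow the ``identify invariant measure + prove uniqueness'' program you outline: it never proves uniqueness of invariant measures for the mean-field flow on $E$. The proof of Theorem~\ref{th:steady-lim} is literally the combination of the $k(N)=0$ case of Proposition~\ref{prop:largeN} (stability) with Lemma~\ref{lem:steady-concentration} at $\kappa=0$, and it is inside Lemma~\ref{lem:steady-concentration} that the limit is pinned down, by bootstrapping rather than abstract uniqueness. Claim~\ref{claim:q1lower} shows that any MFFSP with $q_1(0) \geq \varepsilon_1$ satisfies $\liminf_t q_1(t) \geq \lambda$, which combined with the stationarity balance $\lim_N \expt(q_1^N(\infty)) \leq \lambda$ forces $q_1^N(\infty) \pto \lambda$; Claim~\ref{claim:q2conv} supplies exactly the quantitative downward-drift control you anticipated, showing that $q_2(0) > \varepsilon_3$ with $q_1(0)=\lambda$ would push $q_1$ strictly above $\lambda$ within a fixed time, hence $q_2^N(\infty) \pto 0$, and monotonicity $q_i \leq q_2$ then handles all higher coordinates at once; Claim~\ref{claim:delta0delta1} settles $\dd$. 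If you supply analogues of Lemma~\ref{lem:expo-bound} and Claim~\ref{claim:q2conv} your outline becomes a proof; as written, the concentration bound and the drift-control argument---the two pieces that do the real work---are missing.
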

Note that the fixed point $(\qq^\star, \dd^\star)$ is such that the  probability of wait vanishes as $N\to\infty$ and the asymptotic fraction of active servers is minimum possible, and in this sense, the fixed point is optimal.
Thus, Theorem~\ref{th:steady-lim} implies that the TABS scheme provides fluid-level optimality for large-scale systems in terms of delay performance and resource utilization,
while involving only $O(1)$ communication overhead per task.

\section{Proofs of the main results}\label{sec:proofs}

In Subsection~\ref{ssec:fluid} we introduce 
the notion of conventional fluid scaling (when the number of servers is fixed) and fluid sample paths (FSP), and state Proposition~\ref{prop:largeN} that implies Theorem~\ref{th:stab} as an immediate corollary.
Subsection~\ref{ssec:large-scale} contains two key results for sequence of systems with increasing system size, i.e., number of servers $N\to\infty$, and proves Theorem~\ref{th:steady-lim}.

\subsection{Conventional fluid limit for a system with fixed N}\label{ssec:fluid}
In this subsection first we will introduce a notion of fluid sample path (FSP) for finite-$N$ systems where some of the queue lengths are infinite.
We emphasize that this is \emph{conventional fluid limit}, in the sense that the number of servers is fixed, but the time and the queue length at each server are scaled by some parameter that goes to infinity.

Loosely speaking, conventional fluid limits are usually defined as follows: 
For a fixed $N$, consider a sequence of systems with increasing initial norm (total queue length) $R$ say. 
Now scale the queue length process at each server and the time by $R$.
Then any weak limit of this sequence of (space and time) scaled processes is called an FSP.
Observe that this definition is inherently not fit if the system has some servers whose initial queue length is infinity.
Thus we introduce a suitable notion of FSP that does not require the scaled norm of the initial state to be 1.
We now introduce a rigorous notion of FSP for systems with some of the queues being infinite.\\

\noindent
{\bf Fluid limit of a system with some of the queues being infinite.} 
Consider a system of $N$ servers with indices in $\cN$, among which $k$ servers with indices in $\cK\subseteq\cN$ have infinite queue lengths.
Now consider any sequence of systems indexed by $R$ such that $\sum_{i\in\cN\setminus\cK}X_i^{N,R}(0) <\infty$, and 
\begin{equation}\label{eq:conventional}
x^{N,R}_i(t) := \frac{X_i^{N,R}(Rt)}{R},\quad i\in\cN\setminus\cK
\end{equation}
be the corresponding scaled processes.
For fixed $N$, the scaling in~\eqref{eq:conventional} will henceforth be called as the \emph{conventional fluid-scaled} queue length process.
Also, for the $R$-th system, let $A_i^{N,R}(t)$ and $D_i^{N,R}(t)$ denote the cumulative number of arrivals to and departures from server $i$ with $a_i^{N,R}(t):=A_i^{N,R}(Rt)/R$ and $d_i^{N,R}(t):=D_i^{N,R}(Rt)/R$ being the corresponding fluid-scaled processes, $i\in\cN$.
We will often omit the superscript $N$ when it is fixed from the context.

Now for any fixed $N$, suppose the (conventional fluid-scaled) initial states converge, i.e., $x^{R}(0) \to x(0)$, for some fixed
$x(0)$ such that $0 \le \sum_{i\in\cN\setminus\cK} x_i(0) < \infty$ and $x_i(0) = \infty$ for $i\in\cK$.
Then a set of uniformly Lipschitz continuous functions $(x_i(t), a_i(t), d_i(t))_{i\in\cN}$ on the time interval $[0,T]$ (where $T$ is possibly infinite) with the convention $x_i(\cdot)\equiv \infty$ for all $i\in \cK$, is called a \emph{fluid sample path} (FSP) starting from $\xx(0)$, if for any subsequence of $\{R\}$ there exists a further subsequence (which we still denote by $\{R\}$) such that with probability~1, along that subsequence the following convergences hold:
\begin{enumerate}[{\normalfont (i)}]
\item For all $i\in \cN$, $a_i^{R}(\cdot)\to a_i(\cdot)$ and $d_i^{R}(\cdot)\to d_i(\cdot)$, u.o.c.
\item For $i\in\cN\setminus\cK$, $x_i^{R}(\cdot)\to x_i(\cdot)$ u.o.c.
\end{enumerate}
Note that the above definition is equivalent to convergence in probability to the unique FSP.
For any FSP almost all points (with respect to Lebesgue measure) are \emph{regular}, i.e., for all $i\in \cN\setminus\cK$, $x_i(t)$ has proper left and right derivatives with respect to $t$, and for all such regular points \[x_i'(t) = a_i'(t)-d_i'(t).\]

\noindent
{\bf Infinite queues as part of an FSP.} The arrival and departure functions $a_i(t)$ and $d_i(t)$ are well-defined for each queue, including infinite queues. Of course, the derivative $x'_i(t)$ for an infinite queue makes no direct sense (because an infinite queue remains infinite at all times). However, we adopt a convention that $x'_i(t)=a'_i(t)-d'_i(t)$, for all queues, including the infinite ones. For an FSP, $x'_i(t)$ is sometimes referred to as a ``drift'' of (finite or infinite) queue $i$ at time $t$.\\

We are now in a position to state the key result that establishes the large-$N$ stability of the TABS scheme.
\begin{proposition}\label{prop:largeN}
The following holds for all sufficiently large $N$. 
For each $0\leq k\leq N$, consider a system where $k$ servers with indices in $\cK$ have infinite queues, and the remaining $N-k$ queues are finite.
Then, for each $j=1,2,\ldots,N$, there exists $\varepsilon(j) >0$, such that the following properties hold ($\varepsilon(j)$ and other constants specified below, also depend on $N$).
\begin{enumerate}[{\normalfont (1)}]
\item For any $\xx(0)$ such that $0 \le \sum_{i\in\cN\setminus\cK} x_i(0) < \infty$ and $x_i(0) = \infty$ for $i\in\cK$,
there exists $T(k,\xx(0))<\infty$ and a unique FSP on the interval $[0,T(k,\xx(0))]$, which has the following properties:
	\begin{enumerate}[{\normalfont (i)}]
	\item If at a regular point $t$, $\cM(t):=\{i\in \cN: x_i(t)>0\}$ with $|\cM(t)|=m>k$, then $x_i'(t) = -\varepsilon(m)$ for all $i\in \cM(t)$.
	\item For any $i\in\cN\setminus\cK$, if $x_i(t_0)=0$ for some $t_0$, then $x_i(t)=0$ for all $t\geq t_0$.
	\item $T(k,\xx(0))=\inf\ \{t:x_i(t) = 0\mbox{ for all }i\in\cN\setminus\cK\}$.
	\end{enumerate}
\item The subsystem with $N-k$ finite queues is stable.
\item When the subsystem with $N-k$ finite queues is in steady state, the average arrival rate into each of the $k$ servers  having infinite queue lengths is at most $1-\varepsilon(k)$.
\item For any $x(0)$ such that $0 \le \sum_{i\in\cN\setminus\cK} x_i(0) < \infty$ and $x_i(0) = \infty$ for $i\in\cK$,
there exists a unique FSP on the entire interval $[0,\infty)$. 
In $[0,T(k,x(0))]$, it is as described in Statement 1.
Starting from $T(k,x(0))$, all queues in $\cN\setminus\cK$
stay at $0$ and all infinite queues have drift at most $-\varepsilon(k)$.
\end{enumerate}
\end{proposition}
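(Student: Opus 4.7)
The plan is to establish Statements (1)--(4) by \emph{backward induction on $k$}, from $k=N$ down to $k=0$; the instance $k=0$ then yields Theorem~\ref{th:stab}. A single sequence $\varepsilon(1),\ldots,\varepsilon(N)$ (depending on $N$ and the model parameters) is produced progressively as the induction descends. The base case $k=N$ is immediate: every server is permanently busy, so by symmetry of the TABS dispatching rule each of the $N$ infinite queues receives arrivals at rate $\lambda<1$ and departs at rate $1$. Setting $\varepsilon(N):=1-\lambda$ validates (3)--(4), while (1)--(2) are vacuous as $\cN\setminus\cK=\emptyset$.

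\textbf{Inductive step.} Fix $1\le k\le N$, assume the proposition at all levels $k,\ldots,N$ with $\varepsilon(k),\ldots,\varepsilon(N)$ already determined, and prove it at level $k-1$, in the order (1), (2), (3), (4). For (1), at a regular time $t$ along an FSP let $\cM(t)\supseteq\cK$ denote the set of queues with strictly positive fluid content, $|\cM(t)|=m>k-1$, hence $m\ge k$. Under the conventional scaling $R\to\infty$ the $m$ queues in $\cM(t)$ have size $\Theta(R)$ whereas every other queue in $\cN\setminus\cK$ has size $o(R)$. A time-scale separation / averaging argument shows that on the natural $O(1)$ time scale the $\cM(t)$-queues are effectively frozen at infinity, while the $N-m$ remaining queues relax to the stationary law of the auxiliary \emph{level-$m$} system; by the induction hypothesis this system is stable and each of its $m$ infinite queues receives arrivals (by TABS symmetry, at a common rate) at most $1-\varepsilon(m)$. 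Hence $x_i'(t)=-\varepsilon(m)$ for every $i\in\cM(t)$, proving (1)(i). Item (1)(ii) follows from local balance at a zero-fluid finite queue (any arrivals are matched by immediate departures while the queue is idle-on), and (1)(iii) together with finiteness of $T(k-1,x(0))$ from the uniform positive drift $\min_{m>k-1}\varepsilon(m)$; uniqueness is intrinsic to the construction. Statement (2) is then the classical fluid-stability conclusion of Rybko--Stolyar \cite{RS92} and Dai \cite{D99} applied to the level-$(k-1)$ fluid model.

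\textbf{The large-$N$ input.} For (3) we must exhibit $\varepsilon(k-1)>0$ bounding the steady-state arrival rate into each of the $k-1$ infinite queues by $1-\varepsilon(k-1)$ for all sufficiently large $N$. This is the genuinely large-$N$ part. The mean-field analysis of Section~\ref{sec:mf}, adapted to the case where $k-1$ servers are forced to remain busy, gives as $N\to\infty$ convergence of the empirical occupancy of the non-pinned servers to a deterministic trajectory whose unique equilibrium coincides with the optimal fixed point $(\qq^\star,\dd^\star)$ of Theorem~\ref{th:steady-lim}. At this equilibrium a fraction $1-\lambda$ of servers are idle-on, so by the TABS rule a task is dispatched to one of the finitely many busy pinned servers only with probability vanishing in $N$. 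Combined with tightness of the stationary distributions (provided by the stability of Statement (2) just proved at this same induction step) and an interchange of the $R\to\infty$ and $N\to\infty$ limits, this bounds the per-infinite-queue arrival rate below any fixed constant less than $1$ for all $N$ large enough; one may then take, say, $\varepsilon(k-1):=(1-\lambda)/2$. Statement (4) is obtained by concatenating the FSP from (1) with a linear tail on $[T(k-1,x(0)),\infty)$ on which all finite queues stay at $0$ and each infinite queue drains at rate at least $\varepsilon(k-1)$ by (3).

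\textbf{Main obstacle.} The principal difficulty is the genuine entanglement of the two fluid regimes across induction steps: the FSP analysis at level $k-1$ consumes \emph{stationary}, level-$m\ge k$ information (namely Statement (3)) via a time-scale separation, while Statement (3) at level $k-1$ in turn relies on mean-field convergence together with the finite-$N$ stability just proved at the same level, and therefore requires a careful interchange of the $R\to\infty$ and $N\to\infty$ limits. Orchestrating these two limits simultaneously -- carrying the induction jointly in all four statements and exploiting the \emph{weak monotonicity} of the TABS dynamics (pinning more servers busy can only shrink the idle-on pool in a suitable stochastic-dominance sense) to justify the required averaging and interchange -- is the technical heart of the argument.
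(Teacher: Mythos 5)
Your overall skeleton — backward induction from $k=N$ down, using Part~(4) of the hypothesis (via coupling the positive finite queues with infinite ones) to prove Part~(1), the classical fluid-stability argument for Part~(2), a mean-field/large-$N$ input for Part~(3), and a time-scale-separation/renewal argument for Part~(4) — matches the paper's route. However, your treatment of Part~(3), the genuinely large-$N$ step, has several substantive problems.

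First, controlling the \emph{expected} steady-state arrival rate into an infinite queue requires more than tightness or weak convergence of $q_1^N(\infty)$: the instantaneous arrival rate to an infinite queue is of order $\lambda N/Q_1^N(\infty)$, which is $\Theta(N)$ on the event $\{Q_1^N(\infty)=O(1)\}$, so an $o(1)$ bound on the probability of small $q_1$ does not suffice. The paper needs, and proves (Lemma~\ref{lem:expo-bound}), an \emph{exponential} lower-tail bound $\Pro{q_1^N(\infty)<\varepsilon_1}\le e^{-C_q N}$ precisely so that $\lambda N\cdot\Pro{q_1^N(\infty)<\varepsilon_1}\to 0$. Your proposal omits this estimate, and the phrase ``tightness of the stationary distributions'' cannot replace it.

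Second, your characterization of the mean-field equilibrium is incorrect. Pinning a positive fraction $\kappa=\lim k(N)/N$ of servers busy changes the fixed point: for $\kappa<1-\lambda$ one gets $q_1^\star(\kappa)=\kappa+\lambda$, $q_2^\star(\kappa)=\kappa$, $\delta_0^\star(\kappa)=1-\lambda-\kappa$ (\emph{idle-off}, not idle-on; $u^\star=0$), while for $\kappa\ge 1-\lambda$ one gets $q_1^\star=1$. Your claim that the equilibrium ``coincides with the optimal fixed point of Theorem~\ref{th:steady-lim}'' and that ``a fraction $1-\lambda$ of servers are idle-on'' only makes sense for $\kappa=0$ and mislabels the server state. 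The case split on $\kappa$ is essential: in the regime $\kappa\ge 1-\lambda$ the arrival rate per infinite queue tends to $\lambda$, not to $0$, and your suggested universal choice $\varepsilon(k-1)=(1-\lambda)/2$ emerges only from this regime. Moreover, Part~(3) must be established for each sufficiently large \emph{fixed} $N$, but the mean-field lemmas are asymptotic in $N$; the paper reconciles these by arguing \emph{by contradiction} along a subsequence where $k(N)/N\to\kappa$. That logical scaffolding is missing from your proposal.

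Finally, the ``weak monotonicity'' you invoke — ``pinning more servers busy can only shrink the idle-on pool in a stochastic-dominance sense'' — is exactly the kind of monotonicity the TABS process \emph{lacks} (this is the paper's central difficulty). What the paper calls weak monotonicity is instead the uniform-in-initial-condition recovery property of Lemma~\ref{lem:expo-bound}: starting from any state, after a fixed time the fraction of busy servers is bounded below except on an exponentially small event. You should replace the stochastic-dominance claim with that.
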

Although Part 2 follows from Part 1, and Part 4 is stronger than Part 1, the statement of Proposition~\ref{prop:largeN} is arranged as it is to facilitate its proof, as we will see in Section~\ref{sec:ind} in detail.
\begin{proof}[Proof of Theorem~\ref{th:stab}]
Note that Theorem~\ref{th:stab} is a special case of Proposition~\ref{prop:largeN} when $k=0$.
\end{proof}

\subsection{Large-scale asymptotics: auxiliary results}
\label{ssec:large-scale}
In this subsection we will state two crucial lemmas that describe asymptotic properties of sequence of systems as the number of servers $N\to\infty$, \emph{if stability is given}.
Their proofs involve mean-field fluid scaling and limits.
\begin{lemma}\label{lem:expo-bound}
There exist $\varepsilon_1>0$ and $C_q=C_q(\varepsilon_1)>0$, such that the following holds.
Consider any sequence of systems with $N\to\infty$ and $k=k(N)$ infinite queues such that $k(N)/N \to \kappa\in [0,1]$, and assume that each of these systems is stable.
Then for all sufficiently large $N$,
\[\Pro{q_1^N(\infty)<\varepsilon_1}\leq \e^{-C_qN}.\]
\end{lemma}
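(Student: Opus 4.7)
The plan is to combine a global attractor property of the mean-field fluid ODE with a uniform (in initial state) exponential concentration of $(\qq^N(t),\dd^N(t))$ around the fluid trajectory on a bounded time window, and then use the assumed stability to transfer the bound to the stationary distribution.

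First I would identify the mean-field ODE. Under the scaling $k(N)/N\to\kappa\in[0,1]$, a standard Kurtz-type functional law-of-large-numbers argument (using exchangeability of servers and the $O(N)$ transition rates of the TABS chain) gives convergence in probability of $(\qq^N(t),\dd^N(t))$ to the solution of a deterministic piecewise ODE on every compact time interval, starting from any convergent initial condition in the compact set $E$. The $k$ infinite queues contribute a rigid offset of $\kappa$ to $q_1$, while the remaining finite-queue part obeys the TABS drift relations with total arrival rate $\lambda$. Since $q_1^N\geq k/N\to\kappa$ already, the statement is trivial for $\kappa>0$ once $\varepsilon_1<\kappa$, so the genuine case is $\kappa=0$.

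The key structural step is then to show that there exist $\varepsilon_1\in(0,\lambda)$ and $T_1<\infty$ such that for every initial state $s_0\in E$, the fluid trajectory $s(\,\cdot\,;s_0)$ satisfies $q_1(t;s_0)\geq 2\varepsilon_1$ for all $t\geq T_1$. The heuristic is that the reservoir chain $\delta_0\to\delta_1\to u\to q_1$ always pumps mass into $q_1$: whenever $q_1<\varepsilon_1$, either $u>0$ and arrivals push $q_1$ up at rate close to $\lambda$, or $u=0$ but then $\delta_0$ is necessarily large, arrivals trigger setups at rate $\lambda$, and after time $O(1/\nu)$ a positive mass of setups completes, feeds $u$, and restarts the chain.

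Given this attractor property, I would invoke the uniform exponential concentration of the Markov jump process around the fluid trajectory on $[0,T_1]$: since transitions are $O(1)$ in size and the rates are $O(N)$, standard exponential martingale estimates (Freedman's inequality or Kurtz's exponential-tightness bound) yield, uniformly over starting states $s_0\in E$,
\[
P_{s_0}\bigl(q_1^N(T_1)<\varepsilon_1\bigr)\leq \e^{-C_0 N},
\]
for some $C_0>0$ depending only on $\varepsilon_1$ and $T_1$. By the assumed stability, if the chain is started from its stationary distribution $\pi^N$ then $(\qq^N(T_1),\dd^N(T_1))\sim\pi^N$ as well; integrating the bound against $\pi^N$ gives $\Pro{q_1^N(\infty)<\varepsilon_1}\leq \e^{-C_0 N}$, proving the lemma with $C_q=C_0$.

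The hardest step will be the attractor property of the fluid ODE. The right-hand side of the ODE is discontinuous at $u=0$ (a Filippov solution concept is needed) and behaves qualitatively differently in the interior $\{u>0\}$, where $q_1$ grows at rate $\lambda-(q_1-q_2)$, and on the face $\{u=0\}$, where the increase of $q_1$ must be routed through the slow setup chain. One must separately control (a) trajectories in $\{u>0\}$; (b) trajectories on the face $\{u=0,\delta_0>0\}$ where the pair $(\delta_1,u)$ undergoes a transient that forces $u$ to become positive within time $O(1/\nu)$; and (c) the corner $\{u=0=\delta_0\}$, which is transient but must be crossed before the pump restarts. A Lyapunov function of the form $\alpha q_1+\beta u+\gamma \delta_1$ with coefficients calibrated to the rates $\lambda,\mu,\nu$ is a natural device to unify these regions.
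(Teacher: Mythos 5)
Your overall plan---establish a uniform-in-initial-state finite-time bound $\Pro{q_1^N(T_0)<\varepsilon_0} \leq \e^{-cN}$ and then transfer it to the stationary distribution using the assumed stability---matches the skeleton of the paper's proof. The gap is in how you propose to obtain that finite-time bound. The paper derives it directly from elementary counting arguments plus Cram\'er's theorem applied to the stochastic process itself: if $q_1^N(0)\geq\varepsilon$, then at least $(\varepsilon/2)\e^{-T_0}N$ of those busy servers are still busy at any time in $[0,T_0]$ with exponentially high probability (and analogously for $\delta_1^N$ and $u^N$); chaining three such statements through the reservoir $\delta_0\to\delta_1\to u\to q_1$ shows that, uniformly in the initial state, one of $q_1,\delta_1$ is of order one within time $T_1$, then one of $q_1,u$ within time $T_1+T_2$, and finally $q_1$ within time $T_1+T_2+T_3\leq T_0$. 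No reference to the fluid ODE is needed at all.

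Your route instead requires (i) a global attractor property of the fluid ODE, uniformly attained by a fixed time $T_1$ from every $s_0\in E$, and (ii) a uniform-in-$s_0$ exponential concentration bound for the process around the fluid trajectory. Step (ii) is the genuine gap: the drift of the TABS process is discontinuous at $u=0$ (and at $\delta_0=0$), so the fluid ODE must be read in a Filippov sense, and the Kurtz/Freedman exponential bounds you invoke assume Lipschitz rate functions and do not apply off the shelf. Near the face $u=0$, fluctuations of order $\sqrt{N}$ can flip the routing of arrivals between idle and busy servers, so showing that the process tracks a discontinuous trajectory uniformly over starting states would require a boundary-aware argument you have not supplied; the standard concentration estimates also do not automatically give constants uniform over all trajectories. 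The paper's counting claims are robust precisely because they never attempt to track the trajectory: they only establish coarse one-sided lower bounds on $q_1^N$, $\delta_1^N$, $u^N$. A minor additional point: the reduction to $\kappa=0$ is slightly off, since $\varepsilon_1$ must be chosen before $\kappa$; the trivially covered case is $\kappa>\varepsilon_1$, not all $\kappa>0$, though your argument for $\kappa=0$ would extend to $0<\kappa\le\varepsilon_1$ as well.
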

\begin{lemma}\label{lem:steady-concentration}
Consider any sequence of systems with $N\to\infty$ and $k=k(N)$ infinite queues such that $k(N)/N \to \kappa\in [0,1]$, and assume that each of these systems is stable.
The following statements hold:
\begin{enumerate}[{\normalfont (1)}]
\item If $\kappa \geq 1-\lambda$, then $q_1^N(\infty)\pto 1$ as $N\to\infty$. 
\item If $\kappa < 1-\lambda$, then 
the limit of the sequence of stationary occupancy states $(\qq^N(\infty),\dd^N(\infty))$ is the distribution concentrated at the unique equilibrium point $(\qq^\star(\kappa), \dd^\star(\kappa))$, such that
\begin{align*}
q_1^\star(\kappa) &= \kappa+\lambda,\quad q_2^\star(\kappa) = \kappa\\
\delta_0^\star(\kappa) &= 1-\lambda-\kappa,\quad\delta_1^\star(\kappa) = 0.
\end{align*}
Consequently,
\begin{equation}\label{eq:foundbusy}
\lim_{N\to\infty}\Pro{Q_1^N(\infty)+\Delta_0^N(\infty)+\Delta_1^N(\infty)=N}=0.
\end{equation}
\end{enumerate}
\end{lemma}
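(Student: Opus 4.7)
My plan is to combine the compactness of the state space $E$ under the product topology with a mean-field fluid-limit analysis of the transient dynamics. Because $E$ is compact, the sequence of stationary laws $\{(\qq^N(\infty),\dd^N(\infty))\}_{N\geq 1}$ is automatically tight, so each subsequence admits a further weakly convergent one with some limit; the task will be to identify every such limit with the Dirac mass at the claimed fixed point. I would use the stationarity of $(\qq^N(t),\dd^N(t))$ and a standard generator/martingale argument (as in \cite{MDBL17}) to show that every path-valued weak limit is, almost surely, a time-stationary solution of a deterministic mean-field ODE on $E$. In that ODE the $k(N)$ infinite queues contribute a constant $\kappa$ to every coordinate $q_i$, and, writing $u:=1-q_1-\delta_0-\delta_1$ for the fraction of idle-on servers, the ODE has the familiar boundary-driven structure of TABS: arrivals are routed to idle-on servers whenever $u>0$, and when $u=0$ they go to an arbitrary busy server (triggering a setup whenever $\delta_0>0$).

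\textbf{Identifying the unique fixed point.} I would then carry out a case analysis of the fixed-point equations. A stationary point must have $u^\star=0$, since otherwise $\dot\delta_0=\mu u^\star>0$. At $u^\star=0$, the conditions $\dot\delta_0=\dot\delta_1=0$ force $\delta_1^\star=0$ and the balance $q_1^\star-q_2^\star=\lambda$ between the rate of idle-on creation (via level-1 departures of finite queues) and the rate of consumption by arrivals. Setting $\dot q_i=0$ for $i\geq 2$ yields $q_i^\star=q_{i+1}^\star$, and the stability of the finite subsystem will give $q_i^\star\to\kappa$ as $i\to\infty$, forcing $q_i^\star=\kappa$ for every $i\geq 2$ and hence $q_1^\star=\kappa+\lambda$. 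In case~(2), the residual mass yields $\delta_0^\star=1-\lambda-\kappa>0$ as claimed; in case~(1), non-negativity of $\delta_0^\star$ rules out the above solution, so the balance $q_1^\star-q_2^\star=\lambda$ must break down at the boundary and $q_1^\star=1$ is the only possibility (the finite subsystem is saturated), giving the conclusion of Part~(1). Since the limiting process is time-stationary and the fluid ODE has a unique fixed point, it must be identically equal to that point, yielding the claimed convergence in distribution (and hence in probability, the limit being deterministic).

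\textbf{Main obstacle.} The hard part will be handling the non-smoothness of the fluid ODE at the boundary $u=0$: as $u$ crosses $0$ the arrival dispatch rule switches abruptly between two regimes, so I will need to formulate the fluid sample paths there with care (for instance via a differential inclusion, or via a fast-time-scale averaging argument for the idle-on layer), both to make sense of the ODE on this boundary and to justify that a time-stationary weak limit of the pre-limit process actually solves it. A separate, more specific technical point is that $u^N(\infty)\pto 0$ is strictly weaker than the event-level claim \eqref{eq:foundbusy}. I plan to obtain the latter from the stationary flow-balance identity
\[
\lambda\,\mathbbm{P}\bigl(U^N(\infty)=0,\ \Delta_0^N(\infty)>0\bigr) \;=\; \nu\,\mathbbm{E}\bigl[\delta_1^N(\infty)\bigr],
\]
combined with $\delta_1^N(\infty)\pto 0$ and $\delta_0^N(\infty)\pto 1-\lambda-\kappa>0$ from Part~(2), which together force both $\mathbbm{P}(U^N(\infty)=0,\Delta_0^N(\infty)>0)\to 0$ and $\mathbbm{P}(\Delta_0^N(\infty)=0)\to 0$, whence \eqref{eq:foundbusy} follows.
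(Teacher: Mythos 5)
Your high-level skeleton (tightness on the compact $E$, identify subsequential weak limits as time-stationary solutions of the mean-field dynamics, determine the unique equilibrium, derive \eqref{eq:foundbusy} from flow balance) is reasonable and the equilibrium identification matches the paper's fixed point. But there is a genuine gap in the step ``the limiting process is time-stationary and the fluid ODE has a unique fixed point, so it must be identically that point.'' Uniqueness of the equilibrium does \emph{not} by itself rule out other time-stationary weak limits: an ODE flow on a compact set can carry invariant probability measures supported on limit cycles, on nontrivial $\omega$-limit sets, or spread across heteroclinic structures, none of which need be Dirac at the fixed point. What you actually need — and what is the mathematical heart of the lemma — is a global attraction statement: every mean-field fluid sample path (at least those with $q_1(0)$ bounded away from $0$) converges as $t\to\infty$ to the claimed point. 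The paper supplies exactly this via Claims~\ref{claim:q1case1}, \ref{claim:q1lower}, and \ref{claim:q2conv}, which are Lyapunov-type arguments exploiting the monotone drift of $q_1$ when $q_1-q_2<\lambda$, and a separate contradiction argument for $q_2$. Your proposal replaces this with a pure fixed-point calculation, which is insufficient.

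Two further points worth flagging. First, the mean-field convergence (Proposition~\ref{prop:mf}) is stated only for initial states with $q_1(0)>0$; to apply it to stationary states you need the exponential lower bound on $q_1^N(\infty)$ from Lemma~\ref{lem:expo-bound}, which your proposal does not invoke. Second, your claim that ``stability of the finite subsystem will give $q_i^\star\to\kappa$ as $i\to\infty$, forcing $q_i^\star=\kappa$ for all $i\ge 2$'' is not justified by the fixed-point equations alone (they only give $q_i^\star=q_{i+1}^\star$ for $i\ge2$); the paper instead proves $q_2^N(\infty)\pto\kappa$ by showing that a MFFSP with $q_2(0)>\kappa+\varepsilon_3$ would push $q_1$ strictly above $\kappa+\lambda$, contradicting the already-established convergence of $q_1^N(\infty)$. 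Your resolution of \eqref{eq:foundbusy} via the stationary balance $\lambda\,\prob(U^N=0,\Delta_0^N>0)=\nu\,\expt[\delta_1^N(\infty)]$ together with $\delta_0^N(\infty)\pto 1-\lambda-\kappa>0$ is a clean and correct way to make the paper's ``PASTA'' remark explicit.
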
 
Lemmas~\ref{lem:expo-bound} and \ref{lem:steady-concentration} are proved in Section~\ref{sec:mf}. 
These results will be used to derive necessary large-$N$ bounds on the expected arrival rate into each of the servers having infinite queue lengths when the system is in steady state.
\begin{remark}\normalfont
It is also worthwhile to note that Lemmas~\ref{lem:expo-bound} and~\ref{lem:steady-concentration} can be thought of as a \emph{weak monotonicity} property of the TABS scheme as mentioned earlier.
Loosely speaking, the weak monotonicity requires that no matter where the system starts, in some fixed time the system arrives at a state with a certain fraction of busy servers.
The purpose of Lemmas~\ref{lem:expo-bound} and~\ref{lem:steady-concentration} is to bound \emph{under the assumption of stability}, the expected rate at which task arrives to the infinite queues when the subsystem containing the finite queues is in steady-state: In this regard
\begin{enumerate}[{\normalfont (i)}]
\item Lemma~\ref{lem:steady-concentration} guarantees high probability bounds on the total number of busy servers, so that with probability tending to 1 as $N\to\infty$, the fraction of busy servers in the whole system is at least $\lambda$ in steady state.
\item But note that since the arrival rate is $\lambda N$, when the system has few busy servers (even with an asymptotically vanishing probability), the arrival rate to the infinite servers can become $\Theta(N)$.
Thus we need the exponential bound stated in Lemma~\ref{lem:expo-bound} in order to obtain bound on the expected rate of arrivals to the infinite queues.
\end{enumerate}
In Subsection~\ref{ssec:part3} we will see that as a consequence of Lemmas~\ref{lem:expo-bound} and~\ref{lem:steady-concentration}, we obtain that for large enough $N$, under the assumption of stability, the steady-state rate at which tasks join an infinite queue is strictly less than 1, and the drift of the infinite queues as defined in Subsection~\ref{ssec:fluid} becomes strictly negative.
This fact will be used in the proof of Proposition~\ref{prop:largeN}.
\end{remark}

\begin{proof}[Proof of Theorem~\ref{th:steady-lim}]
Note that given the large-$N$ stability property proved in Proposition~\ref{prop:largeN} for $k(N)=0$, and the convergence of 
stationary distributions under the assumption of stability in Lemma~\ref{lem:steady-concentration}, the proof of Theorem~\ref{th:steady-lim} is immediate.
\end{proof}

\section{Proof of \texorpdfstring{Proposition~\ref{prop:largeN}}{Proposition 3.1}: An inductive approach}\label{sec:ind}
Throughout this section we will prove Proposition~\ref{prop:largeN}.
The proof consists of several steps and uses both conventional fluid limit and mean-field fluid scaling and limit in an intricate fashion.
Below we first provide a roadmap of the whole proof argument.

\subsection{Proof idea and the roadmap}\label{ssec:roadmap}
The key idea for the proof of Proposition~\ref{prop:largeN} is to use backward induction in $k$, starting from the base case $k=N$. 
For $k=N$, all the queues are infinite.
In that case, Parts (1) and (2) are vacuously satisfied with the convention $T(N,\xx(0))=0$.
Further observe that TABS scheme does not differentiate between two large queues (in fact, any two non-empty queues).
Thus, when all queues are infinite, since all servers are always busy, each arriving task is assigned uniformly at random, and each server has an arrival rate $\lambda$ and a departure rate 1.
Thus, it is immediate that the drift of each server is $-(1-\lambda)<0$, and thus, $\varepsilon(N)=1-\lambda$.
This proves (3), and then (4) follows as well.

Now, we discuss the ideas to establish the backward induction step, i.e., assume that Parts (1)--(4) hold for $k\geq k(N)+1$ for some $k(N)\in \{0,1,\ldots, N-1\}$ and verify that the statements hold for $k=k(N)$.
Rigorous proofs to verify Parts (1)--(4) for $k=k(N)$ are presented  in Subsections~\ref{ssec:part1}--\ref{ssec:part4}.
We begin by providing a roadmap of these four subsections.

\paragraph{Part (1).} Recall that we denote by $\cK$ the indices of the servers having infinite queue lengths, and by $\cN$ the set of all server indices.
Denote by $x_{(i)}$ the $i$-th largest component of $\xx$ (ties are broken arbitrarily).
Then for any $\xx$ with $m\in  \{0,1,\ldots, N-1\}$ infinite components, define 
\begin{equation}\label{eq:Tkx}
T(m,\xx) := \frac{x_{(N)}}{\varepsilon(N)}+\sum_{i=1}^{N-m-1}\frac{x_{(N-i)}  -  x_{(N-i+1)}}{\varepsilon(N-i)}
\end{equation}
with the convention that $T(N,\xx)=0$ if all components of $\xx$ are infinite. 
For $k(N)\in \{0, 1,\ldots, N-1\}$, Part (1) is proved with the choice of $T(k, \xx(0))$ as given by~\eqref{eq:Tkx}. 
Indeed, recall that we are at the backward induction step where there are $k(N)$ infinite queues, and we also know from the hypothesis that Parts (1)--(4) hold if there are $k(N)+1$ or larger infinite queues in the system.
Loosely speaking, the idea is that as long as a conventional fluid-scaled queue length $x_j(t)$ at some server $j\in \cN\setminus\cK$ is positive, it can be coupled with a system where the queue length at server $j$ is infinite.
Thus, as long as there is at least one server $j\in \cN\setminus\cK$ with $x_j(t)>0$, the system can be `treated' as a system with at least $k(N)+1$ infinite queues, in which case, Part~(4) of the backward induction hypothesis furnishes with the drift of each positive component of the FSP (in turn, which is equal to the drift of each infinite queue for the corresponding system).

Now to explain the choice of $T(m,\xx)$ in~\eqref{eq:Tkx}, observe that when all the components of the $N$-dimensional FSP are strictly positive, each component has a negative drift of $-\varepsilon(N)$.
Thus, $x_{(N)}/\varepsilon(N)$ is the time when at least one component of the $N$-dimensional FSP hits 0.
From this time-point onwards, each positive component has a  drift of $-\varepsilon(N-1)$, and thus,  $x_{(N)}/\varepsilon(N) + (x_{(N-1)} - x_{(N)})/\varepsilon(N-1)$ is the time when two components hit 0.
Proceeding this way, one can see that at time $T(m, \xx(0))$ all finite positive components of the FSP hit 0.
The above argument is formalized in Subsection~\ref{ssec:part1}.

\paragraph{Part (1) $\boldsymbol{\implies}$ Part (2).}
To prove Part 2, we will use the fluid limit technique of proving stochastic stability as in~\cite{RS92, S95, D99}, see for example \cite[Theorem 4.2]{D99} or \cite[Theorem 7.2]{S95} for a rigorous statement.
Here we need to show that the sum of the non-infinite queues (of an FSP) drains to~0. This is true, because by Part~(1) each positive non-infinite queue will have negative drift. The formal proof is in Subsection~\ref{ssec:part2}.

\paragraph{Part (2) + Lemmas~\ref{lem:expo-bound} and~\ref{lem:steady-concentration} $\boldsymbol{\implies}$ Part (3).}
Note that in the proofs of Parts (1) and (2) we have only used the backward induction hypothesis, and have not imposed any restriction on the value of $N$.
This is the only part where in the proof we use the large-scale asymptotics, in particular, Lemmas~\ref{lem:expo-bound} and~\ref{lem:steady-concentration}.
For that reason, in the statement of Proposition~\ref{prop:largeN} we use ``large-enough $N$''.
The idea here is to prove by contradiction.
Suppose Part (3) does not hold for infinitely many values of $N$. 
In that case, it can be argued that there exist a subsequence $\{N\}$ and \emph{some} sequence $\{k(N)\}$ with $k(N)\in \{0,1,\ldots, N-1\}$, such that when the subsystem consisting of $N-k(N)$ finite queues is in the steady state, the average arrival rate into each of the $k(N)$ servers having infinite queue lengths is at least~1, along the subsequence.
Loosely speaking, in that case, Lemmas~\ref{lem:expo-bound} and~\ref{lem:steady-concentration} together imply that for large enough $N$, there are `enough' busy servers, so that the rate of arrival to each infinite queues is strictly smaller than~1, which leads to a contradiction.
Note that we can apply Lemmas~\ref{lem:expo-bound} and~\ref{lem:steady-concentration} here, because Part~(2) ensures the required stability. The rigorous proof is in Subsection~\ref{ssec:part3}.

\paragraph{Parts (2), (3) + Time-scale separation $\boldsymbol{\implies}$ Part (4).}
We assume that Parts (1) -- (3) hold for $k \in \{k(N), k(N)+1, \ldots, N\}$, and we will verify Part (4) for $k=k(N)$.
Observe that it only remains to prove convergence to the FSP on the (scaled) time interval $[T(k,\xx(0)),\infty]$.
For this, observe that it is enough to consider the sequence of systems for which $\xx^R(0)\to\xx(0)$ where $x_i(0)=0$ for all $i\in\cN\setminus\cK$. 
In particular, all that remains to be shown is that the drift of each infinite queue is indeed~$-\varepsilon(k)$.
Recall the conventional fluid scaling and FSP from Subsection~\ref{ssec:fluid}, and let $R$ be the scaling parameter.
The proof consists of two main parts:
\begin{enumerate}[{\normalfont (i)}]
\item Let us fix any state $z$ of the \emph{unscaled} process. If the sequence of systems is such that $\xx^R(0)\to\xx(0)$ where $x_i(0)=0$ for all $i\in\cN\setminus\cK$, then due to Part (2), for the subsystem consisting of finite queues, the (scaled) hitting time to the (unscaled) state $z$ converges in probability to 0.
Also, since this subsystem is positive recurrent (due to Part (2)), starting from a fixed (unscaled) state~$z$, its expected (unscaled) return time to the state~$z$ is $O(1)$.
This will allow us to split the (unscaled) time line into i.i.d.~renewal cycles of finite expected lengths.
In addition, this also shows that in the scaled time the subsystem of finite queues evolves on a faster time scale and achieves `instantaneous stationarity'.
\item From the above observation we can claim that the number of arrivals to any specific infinite queue can be written as a sum of arrivals in the above defined i.i.d.~renewal cycles.
Using the strong law of large numbers (SLLN) we can then show that in the limit $R\to \infty$, the instantaneous rate of arrival to an specific infinite queue is given by the \emph{average arrival rate} when the subsystem with $N-k$ finite queues is in steady state.
Therefore, Part (3) completes the verification of Part (4).
\end{enumerate}
The above argument is rigorously carried out in Subsection~\ref{ssec:part4}.

\subsection{Coupling with infinite queues to verify Part (1)}\label{ssec:part1}
To prove Part (1), fix any $\xx(0)$ such that $0 \le \sum_{i\in\cN\setminus\cK} x_i(0) < \infty$ and $x_i(0) = \infty$ for $i\in\cK$.
Let $\cK_1\subseteq \cN\setminus\cK$ be the set of server-indices~$i$, such that $x_i(0) > 0$.
We will first show that when $\sum_{i\in\cN\setminus\cK} x_i(0) > 0$ with $|\cM(t)|=m\geq k(N)+1$, then it has a negative drift $-\varepsilon(m)$ for all $i\in\cM(t)$, thus proving Part (1.i). 
Since $\varepsilon(m)$'s are positive, this will then also imply Part (1.ii).
Now assume $\sum_{i\in \cN\setminus\cK}x_i(0)>0$. 
In that case we have that $|\cK_1|=:k_1>0$.
Now consider the sequence of processes $(x_i^R(\cdot), a_i^R(\cdot), d_i^R(\cdot))_{i\in\cN}$ along any subsequence $\{R\}$.
Define the stopping time
$$T^R:=\inf\big\{t: X_i^R(t) = 0 \mbox{ for some }i\in\cK_1\big\},$$
and $\tau^R= T^R/R$.
In the time interval $[0,T^R]$, we will couple this system with a system, let us label it $\Pi$, with $k+k_1$ infinite queues.
Let $(\bx_i^R(\cdot), \ba_i^R(\cdot), \bd_i^R(\cdot))_{i\in\cN}$ be the queue length, arrival, and departure processes corresponding to the system $\Pi$, and assume that $\bx_i^R(0)$ is infinite for $i\in \cK\cup \cK_1$. 
Now couple each arrival to and departure from $i$-th server in both systems, $i\in\cN$.
Since the scheme does not distinguish among servers with positive queue lengths, observe that up to time $T^R$ both systems evolve according to their own statistical laws.
Also, up to time $T^R$, the queue length processes at the servers in $\cN\setminus (\cK\cup\cK_1)$ in both systems are identical.
Thus, in the (scaled) time interval $[0,\tau^R]$, $a_i^R \equiv \ba_i^R$ and $d_i^R\equiv \bd_i^R$ for all $i\in\cN$, and $x_i^R\equiv \bx_i^R$ for all $i\in \cN\setminus \cK$.
Therefore, using induction hypothesis for systems with $k+k_1\geq k(N)+1$ infinite queues, there exists a subsequence $\{R\}$ along which with probability 1,
$$(\bx_i^R(\cdot), \ba_i^R(\cdot), \bd_i^R(\cdot))_{i\in\cN}\to (\bx_i(\cdot), \ba_i(\cdot), \bd_i(\cdot))_{i\in\cN},$$
where $\bx_i\equiv 0$ for all $i\in \cN\setminus (\cK\cup\cK_1)$, and $\bx_j\equiv \infty$ with $\bx_j'\equiv -\varepsilon(k+k_1)<0$ for all $j\in\cK\cup\cK_1$.
Consequently, in the time interval $[0,\tau]$, along that subsequence with probability 1,
$$(x_i^R(\cdot), a_i^R(\cdot), d_i^R(\cdot))_{i\in\cN}\to (x_i(\cdot), \ba_i(\cdot), \bd_i(\cdot))_{i\in\cN}$$
with $x_i= \bx_i\equiv 0$ for all $i\in\cN\setminus (\cK\cup\cK_1)$ and $x_i'\equiv -\varepsilon(k+k_1)<0$ for all $i\in\cK\cup\cK_1$,
where $\tau = x_{(k+k_1)}/\varepsilon(k+k_1)>0$.
Observe that the above argument can be extended till the time $\sum_{i\in \cN\setminus\cK}x_i(t)$ hits zero.
Furthermore, following the argument as above, this time is given by $T(k(N),\xx(0))$ as given in~\eqref{eq:Tkx}.
This completes the proof of Part 1 (iii).

\subsection{Conventional fluid-limit stability to verify Part (2)}\label{ssec:part2}
As mentioned earlier, we will use the fluid limit technique of proving stochastic stability as in~\cite{RS92, S95, D99} to prove Part (2).
Consider a sequence of initial states with increasing norm $R$, i.e., $\sum_{i\in\cN\setminus\cK}X_i^{R}(0) =R$ and $X_i^R(0)=\infty$ for $i\in\cK$.
Then from Part (1.iii), we know that for any sequence there exists a further subsequence $\{R\}$ along which with probability 1, the fluid-scaled occupancy process $(x_i^R(\cdot))_{i\in\cN}$ converges to the process $(x_i(\cdot))_{i\in\cN}$ 
for which $\sum_{i\in\cN\setminus\cK}x_i(t)$ hits 0 in finite time $T(k(N), \xx(0))$, and stays at 0 afterwards.
This verifies the fluid-limit stability condition in~\cite[Theorem 4.2]{D99} and \cite[Theorem 7.2]{S95}, and thus completes the verification of Part (2).

\subsection{Large-scale asymptotics to verify Part (3)}\label{ssec:part3}
The verification of the backward induction step for Part~(3) uses contradiction.
Namely, assuming that the induction step for Part~(3) does not hold, we will construct a sequence of systems with increasing~$N$, for which we obtain a contradiction using Lemmas~\ref{lem:expo-bound} and~\ref{lem:steady-concentration}.
We note that this is the only part in the proof of Proposition~\ref{prop:largeN}, where we use the large-scale (i.e., $N\to\infty$) asymptotic results.

Observe that we have already argued in Subsection~\ref{ssec:roadmap} that \emph{for all} $N$, Parts (1) -- (4) hold for $k=N$.
Now, if for some $N$, Part (3) does not hold for some $k(N)\in \{0,1,\ldots, N-1\}$ while Parts (1)--(4) hold for all $k\geq k(N)+1$, then from the proofs of Parts (1) and (2), note that Parts (1) and (2) hold for $k=k(N)$ as well.
Consequently, the subsystem with $N-k(N)$ finite queues is stable.
Thus we have the following implication.

\begin{implication}\label{impl:part3not}
Suppose, for infinitely many $N$, the induction step to prove Part~(3) of Proposition~\ref{prop:largeN} does not hold for some $k=k(N)$.
 Then there exists a subsequence of $\{N\}$ $($which we still denote by $\{N\})$ diverging to infinity, such that (i) The system with $k(N)$ infinite queues is stable and (ii) The steady-state arrival rate into each infinite queue is at least 1.
\end{implication}

We will now show that Implication~\ref{impl:part3not} leads to a contradiction -- this will prove Part~(3) of Proposition~\ref{prop:largeN}.
Suppose Implication~\ref{impl:part3not} is true. 
Choose a further subsequence $\{N\}$ along which $k(N)/N$ converges to $\kappa\in [0,1]$.
As in the statement of Lemma~\ref{lem:steady-concentration} we will consider two regimes depending on whether $\kappa\geq 1-\lambda$ or not, and arrive at contradictions in both cases.
Since all the infinite queues are exchangeable, we will use $\sigma$ to denote a typical infinite queue.\\

\noindent
\textbf{Case 1.} 
First consider the case when $\kappa\geq 1-\lambda$.
Note that the expected steady-state instantaneous rate of arrival to $\sigma$ is given by
\begin{equation}\label{eq:inst-rate}
\begin{split}
&\expt\Big(\frac{\lambda N}{Q_1^{N}(\infty)}\ind{Q_1^N(\infty)+\Delta_0^N(\infty)+\Delta_1^N(\infty)=N}\Big)\leq \expt\Big(\frac{\lambda N}{Q_1^{N}(\infty)}\Big)= \expt\Big(\frac{\lambda}{q_1^{N}(\infty)}\Big)+o(1).
\end{split}
\end{equation}
Now observe that for large $N$, $\lambda/q_1^N(\infty)\leq 2\lambda/\kappa$, since $q_1^N(s)\geq \kappa/2>0$.
Further from Lemma~\ref{lem:steady-concentration} we know that $q_1^N(\infty)\pto 1$ as $N\to \infty$.
Consequently, $\expt(\lambda/q_1^N(\infty))\to \lambda$ as $N\to\infty.$
Therefore for large enough $N$,
\begin{equation}\label{eq:arrival1}
\begin{split}
&\expt\Big(\frac{\lambda N}{Q_1^{N}(\infty)}\ind{Q_1^N(\infty)+\Delta_0^N(\infty)+\Delta_1^N(\infty)=N}\Big)
\leq \frac{1+\lambda}{2} = 1- \frac{1-\lambda}{2}<1,
\end{split}
\end{equation}
which is a contradiction to Part (ii) of Implication~\ref{impl:part3not}.\\

\noindent
\textbf{Case 2.} 
In case $\kappa<1-\lambda$, first note that the statement in Part (3) is vacuously satisfied if $k(N) \equiv 0$ for all large enough $N$.
Thus without loss of generality, assume $k(N)>0$.
Fix $\varepsilon_1$ as in Lemma~\ref{lem:expo-bound}.
In that case \eqref{eq:inst-rate} becomes
\begin{align*}
&\expt\Big(\frac{\lambda N}{Q_1^{N}(\infty)}\ind{Q_1^N(\infty)+\Delta_0^N(\infty)+\Delta_1^N(\infty)=N}\Big) \\
&\leq \expt\Big(\frac{\lambda N}{Q_1^{N}(\infty)}\ind{Q_1^N(\infty)+\Delta_0^N(\infty)+\Delta_1^N(\infty)=N,\ Q_1^N(\infty)\geq \varepsilon_1 N}\Big) 
+ \lambda N \Pro{Q_1^{N}(\infty)<\varepsilon_1 N}\\
&\leq \frac{\lambda N}{\varepsilon_1 N}\Pro{Q_1^N(\infty)+\Delta_0^N(\infty)+\Delta_1^N(\infty)=N} 
+ \lambda N \Pro{Q_1^{N}(\infty)<\varepsilon_1 N}.
\end{align*}
Now, due to Part~(2) of Lemma~\ref{lem:steady-concentration}, we know that 
\[\Pro{Q_1^N(\infty)+\Delta_0^N(\infty)+\Delta_1^N(\infty)=N}\to 0,\] 
and furthermore, Lemma~\ref{lem:expo-bound} yields 
\[N \Pro{Q_1^{N}(\infty)<\varepsilon_1 N}\to 0\quad \mbox{as}\quad  N\to\infty.\]
Thus, 
\begin{align}\label{eq:arrival2}
&\expt\Big(\frac{\lambda N}{Q_1^{N}(\infty)}\ind{Q_1^N(\infty)+\Delta_0^N(\infty)+\Delta_1^N(\infty)=N}\Big) \to 0 \quad\mbox{as}\quad N\to\infty.
\end{align}
In particular, for large enough $N$, the expected steady-state arrival rate is bounded away from 1, which is again a contradiction to Part (ii) of Implication~\ref{impl:part3not}. 
This completes the verification of Part (3) of the backward induction hypothesis.

\subsection{Time-scale separation to verify Part (4)}\label{ssec:part4}
Assume Parts (1) -- (3) hold for all $k\in \{k(N), k(N)+1,\ldots, N\}$.
Now consider a system containing $k=k(N)$ infinite queues with indices in $\cK$, and recall the conventional fluid scaling and FSP from Subsection~\ref{ssec:fluid}.
Also, in this subsection whenever we refer to the process $\{\XX(t)\}_{t\geq 0}$, the components in $\cK$ should be taken to be infinite.

For the queue length vector $\XX$, define the norm $\|\XX\|:=\sum_{i\notin\cK}X_i$ to be the total number of tasks at the finite queues.
Lemmas~\ref{lem:norm-decay} and~\ref{lem:hitting} state two hitting time results that will be used in verifying Part (4).
\begin{lemma}\label{lem:norm-decay}
For any fixed $\gamma\in (0,1)$, there exists $\tau=\tau(\gamma)$ and $C=C(\gamma)$, such that 
if $\|\XX(0)\|=R\geq C,$ then \[\expt\|X(R\tau)\|\leq (1-\gamma)\|X(0)\|.\]
\end{lemma}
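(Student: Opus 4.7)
\emph{Proof plan.} The strategy is to combine the FSP drift bound from Part~(1) of Proposition~\ref{prop:largeN} (already established in Subsection~\ref{ssec:part1} for $k=k(N)$) with a standard uniform-integrability argument, in the spirit of the Rybko-Stolyar / Dai fluid-stability technique, to upgrade convergence in probability of the scaled norm into convergence of expectations.

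First I would extract the drift of the finite-queue norm along an FSP. For an FSP starting from $\xx(0)$ with $\sum_{i\in \cN\setminus\cK} x_i(0)=1$, let $S(t):=\sum_{i\in \cN\setminus\cK} x_i(t)$ and set
\[
\varepsilon_\star \;:=\; \min_{k(N)+1\,\le\, m\,\le\, N}\varepsilon(m)\;>\;0.
\]
At any regular $t$ with $S(t)>0$ at least one finite queue is positive, so $|\cM(t)|=m\ge k(N)+1$ and Part~(1.i) gives
\[
S'(t)\;=\;-\bigl(m-k(N)\bigr)\,\varepsilon(m)\;\le\;-\varepsilon_\star,
\]
so $S(t)\le \max\{0,\,1-\varepsilon_\star t\}$. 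Given $\gamma\in(0,1)$, I would pick $\gamma'\in(0,1-\gamma)$ and set $\tau:=(\gamma+\gamma')/\varepsilon_\star$, so every FSP with $S(0)=1$ satisfies $S(\tau)\le 1-\gamma-\gamma'$.

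Next I would upgrade this deterministic bound to convergence in probability via a subsequence argument. Given any sequence of initial states $\XX^R(0)$ with $\|\XX^R(0)\|=R$ and $X_i^R(0)=\infty$ for $i\in\cK$, the scaled configurations $\xx^R(0)=\XX^R(0)/R$ lie in the compact unit simplex of the finite-queue coordinates; along any subsequence a further subsequence has $\xx^R(0)\to \xx(0)$ with $\sum_{i\in\cN\setminus\cK}x_i(0)=1$. Part~(1) of Proposition~\ref{prop:largeN} then yields convergence in probability of $\xx^R(\cdot)$ to the unique FSP starting from $\xx(0)$ on $[0,\tau]$, so in particular $\|\XX^R(R\tau)\|/R \pto S(\tau) \le 1-\gamma-\gamma'$ along that subsequence.

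The only step requiring any care is upgrading this probabilistic convergence to $L^1$. For this I would use the trivial pathwise bound
\[
\|\XX^R(R\tau)\|\;\le\;\|\XX^R(0)\|+A^R(R\tau)\;=\;R+A^R(R\tau),
\]
where $A^R(R\tau)$ is the $\mathrm{Poisson}(\lambda N R\tau)$ number of external arrivals in $[0,R\tau]$, which gives $\sup_R \expt\bigl[(\|\XX^R(R\tau)\|/R)^2\bigr]<\infty$ and hence uniform integrability of $\{\|\XX^R(R\tau)\|/R\}_R$. Convergence in probability then upgrades to convergence in $L^1$, so $\expt\|\XX^R(R\tau)\|\le (1-\gamma)R$ for all sufficiently large $R$ along the subsequence. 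Since the FSP bound $S(\tau)\le 1-\gamma-\gamma'$ depends on the limit only through $\sum_i x_i(0)=1$ and the fixed constant $\varepsilon_\star$ (both determined by $N$), the threshold $C=C(\gamma)$ produced by the subsequence argument is uniform over subsequences, giving the lemma. The main (mild) obstacle is precisely this uniform-integrability upgrade; everything else is a direct invocation of the already-established Part~(1).
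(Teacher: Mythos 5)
Your proof is correct and follows essentially the same route as the paper's: invoke Part~(1) of Proposition~\ref{prop:largeN} to get a strictly negative drift for the sum of finite-queue components of the FSP, pick $\tau$ so the unit-normalized norm drops below $1-\gamma$, and upgrade the in-probability convergence of $\|\XX^R(R\tau)\|/R$ to convergence of expectations by dominating with the scaled Poisson arrival count, finishing with a subsequence/compactness argument. The only cosmetic difference is that you justify uniform integrability via a second-moment bound, whereas the paper phrases it as a first-moment bound on the dominating arrival process (which really amounts to $L^1$-convergence of that dominator); the substance is identical.
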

Lemma~\ref{lem:norm-decay} says that if the system starts from an initial state where the total number of tasks in the finite queues is suitably large, then the time it takes when the expected total number of tasks in the finite queues falls below a certain fraction of the initial number, is proportional to itself.
The proof of Lemma~\ref{lem:norm-decay} is fairly straightforward, but is provided below for completeness.
\begin{proof}[Proof of Lemma~\ref{lem:norm-decay}]
Consider a sequence of initial states with an increasing norm, i.e., $\XX^R(0)$ is such that $\|\XX^R(0)\|=R$ where $R^{-1}\XX^R(0)\to\xx(0)$ as $R\to\infty$.
Then from Part 1 we know that as $R\to\infty$, on the time interval $[0,T(m,\xx(0))]$ the process $R^{-1}\XX^R(Rt)$ converges in probability to the unique deterministic process $\xx(t)$ satisfying
\begin{equation}\label{eq:drift}
\sum_{i\in\cN\setminus\cK}x_i'(t)<-c(k(N)) 
\quad \mbox{whenever}\quad \sum_{i\in\cN\setminus\cK}x_i(t)>0,
\end{equation}
where $c(m)= \min\big\{k\varepsilon(k):k(N)+1\leq k\leq N\big\}>0$.
We also know that for any $i\in\cN\setminus\cK$, if $x_i(t_0)=0$ for some $t_0$, then $x_i(t)=0$ for all $t\geq t_0$.
Consequently, since $c(k(N))$ is positive, there exists $\tau= \tau(\gamma)<\infty$, such that 
\[\sup_{\|\xx\|=1}\Big\{\|\xx(\tau)\|: \xx(0) = \xx\in[0,1]^{N-k(N)}\times\{\infty\}^{k(N)}\Big\}<1-\gamma.\]
Now since the expected number of arrivals into the $R$-th system up to time $t$, when scaled by $R$, is $\lambda t$ for any finite $t$, we obtain $\expt(R^{-1}\|X^R(t)\|)\leq 1+\lambda t.$
Therefore, the convergence in probability also implies the convergence in expectation.
Thus for the above choice of $\gamma$,
\[\limsup_{R\to\infty}\expt\Big(\frac{\|\XX^R(R\tau)\|}{R}\Big)<1-\gamma.\]
Hence, there exists $C$ such that for all $R\geq C$,
\[\expt\Big(\frac{\|\XX^R(R\tau)\|}{R}\Big)=\expt\Big(\frac{\|\XX^R(R\tau)\|}{\|\XX^R(0)\|}\Big)\le 1-\gamma.\]
This completes the proof of Lemma~\ref{lem:norm-decay}.
\end{proof}

For any $C>0$, define the set $\cC:=\{\|\XX\|\leq C\}$, and the stopping time $\theta_C := \inf\ \{t: \XX(t)\in\cC\}$.
For large enough $C$, the next lemma bounds the expected hitting time to the fixed set $\cC$ in terms of the norm of the initial state.
\begin{lemma}\label{lem:hitting}
There exists $C, C_1>0$, such that if $\|\XX(0)\|=R\geq C,$ then \[\expt(\theta_C|\XX(0))\leq C_1\|\XX(0)\|.\]
\end{lemma}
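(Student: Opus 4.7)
The plan is to exploit Lemma~\ref{lem:norm-decay} iteratively via the strong Markov property, turning the ``expected contraction over one cycle'' into a geometric decay over successive stopping times, and then bound $\theta_C$ by the sum of the cycle lengths.

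First, I would pick the constants $\gamma \in (0,1)$ (say $\gamma = 1/2$), and the associated $\tau = \tau(\gamma)$ and $C = C(\gamma)$ furnished by Lemma~\ref{lem:norm-decay}. Define a sequence of stopping times $T_0 = 0$ and, recursively on the event $\{\|\XX(T_n)\| \geq C\}$, set $T_{n+1} := T_n + \|\XX(T_n)\|\,\tau$; otherwise declare $T_{n+1} = T_n$. Let $R_n := \|\XX(T_n)\|$ and $N := \inf\{n \geq 0 : R_n < C\}$. Since $\{R_n < C\} \subset \{\XX(T_n) \in \cC\}$, we have $\theta_C \leq T_N$, so it suffices to bound $\expt(T_N \mid \XX(0))$.

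On the event $\{n < N\}$, we have $R_n \geq C$, so by Lemma~\ref{lem:norm-decay} applied to the process restarted at $T_n$ (using the strong Markov property) one gets
\begin{equation*}
\expt\bigl(R_{n+1} \,\big|\, \cF_{T_n}\bigr)\,\ind{n<N} \;\leq\; (1-\gamma)\, R_n\, \ind{n<N}.
\end{equation*}
Taking expectations and noting $\{n+1 < N\} \subseteq \{n < N\}$, I would iterate this to obtain
\begin{equation*}
\expt\bigl(R_n\,\ind{n<N}\bigr) \;\leq\; (1-\gamma)^n\, \|\XX(0)\|
\end{equation*}
for every $n \geq 0$, by induction on $n$ starting from $R_0 = \|\XX(0)\|$.

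Finally, since the cycle lengths satisfy $T_{n+1} - T_n = R_n \tau$ on $\{n < N\}$,
\begin{equation*}
\expt(\theta_C \mid \XX(0)) \;\leq\; \expt(T_N \mid \XX(0)) \;=\; \tau \sum_{n=0}^{\infty} \expt\bigl(R_n \,\ind{n<N}\bigr) \;\leq\; \tau \|\XX(0)\| \sum_{n=0}^{\infty}(1-\gamma)^n \;=\; \frac{\tau}{\gamma}\,\|\XX(0)\|,
\end{equation*}
giving the claim with $C_1 := \tau/\gamma$. The argument is almost entirely routine once Lemma~\ref{lem:norm-decay} is in hand; the only point requiring some care is the legitimacy of applying Lemma~\ref{lem:norm-decay} with a \emph{random} initial norm $R_n$, which is justified by the strong Markov property at the stopping time $T_n$ together with the fact that the bound in Lemma~\ref{lem:norm-decay} is uniform over initial states of any given norm $\geq C$. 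I do not anticipate a serious obstacle here, beyond writing the conditioning cleanly.
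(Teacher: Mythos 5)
Your proof is correct and takes essentially the same route as the paper: both define the same sequence of stopping times (each of length $\tau$ times the current norm), bound $\theta_C$ by the sum of the cycle lengths, and exploit the contraction from Lemma~\ref{lem:norm-decay} at each step via the strong Markov property. The only difference is cosmetic: where the paper invokes Dynkin's lemma and \cite[Proposition 11.3.2]{MT93} as a black box, you unpack that machinery into an explicit geometric-series calculation $\expt(R_n\,\ind{n<N})\le(1-\gamma)^n\|\XX(0)\|$, arriving at the identical constant $C_1=\tau/\gamma$.
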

\begin{proof}[Proof of Lemma~\ref{lem:hitting}]
Fix any $\gamma\in (0,1)$, and take $\tau = \tau(\gamma)$ and $C=C(\gamma)$ as in Lemma~\ref{lem:norm-decay}.
For $i\geq 1$, define the sequence of random variables $T_i:=\tau \|\XX(T_{i-1})\|$ with the convention that $T_0=0$.
Now consider the discrete time Markov chain $\{\Phi_i:i\geq 0\}$ adapted to the filtration $\boldsymbol{\cF}=\bigcup_{i\geq 0}\cF_i$, where $\Phi_i= \XX(T_i)$ is the value of the continuous time Markov process sample at times $T_i$'s, and $\cF_i=\sigma(\Phi_0,\Phi_1,\ldots,\Phi_i)$ is the sigma field generated by $\{\Phi_0,\Phi_1,\ldots,\Phi_i\}$.
Further, for $i\geq 0$ define the stopping time
$\htheta_C:= \inf\ \{j\geq 0:Z_j\leq C\}$.
Then observe that 
\[\theta_C\leq \sum_{i=1}^{\htheta_C}T_i=:\Psi_C.\]
Also define $\alpha_i = \sum_{j=1}^iT_j$ for $i\geq 1$, and hence $\alpha_{\htheta_C}=\Psi_C.$
Then as a consequence of Dynkin's lemma~\cite[Theorem 11.3.1]{MT93}, using~\cite[Proposition 11.3.2]{MT93} we have  
\[\expt(\theta_C)\leq \expt(\Psi_C)\leq \frac{\tau}{\gamma} \expt\|\XX(0)\|.\]
Choosing $C_1 = \tau/\gamma$ completes the proof.
\end{proof}
Now we have all the ingredients to verify Part (4) of the backward induction hypothesis.
Note that we now look at the sequence of conventional fluid-scaled processes starting at (scaled) time $T(k(N),\xx(0))$.
From the verification of Part (1) we already know that $x_i(t)= 0$ for all $t\geq T(k(N),\xx(0))$, $i\in\cN\setminus\cK$.
Thus, it only remains to show that starting from time $T(k(N),\xx(0))$, the drift of each of the infinite queues is at most $-\varepsilon(k(N))$.
Specifically, we will construct a probability space where the required probability 1 convergence holds.

In order to simplify writing, we assume that the system starts at time 0, and thus it is enough to consider a sequence of initial queue length vectors such that
\[\|\xx^R(0)\|\to 0\quad \mbox{as}\quad R\to\infty,\]
where $R$ is the parameter in the conventional fluid scaling.
Hence, Lemma~\ref{lem:hitting} yields that $R^{-1}\expt(\theta_C|\XX^R(0))\to 0$ as $R\to\infty$.
Consequently, $R^{-1}\theta_C\pto 0.$
Thus, the fluid-scaled time to hit the set~$\cC$ vanishes in probability, which is stated formally in the following claim.
\begin{claim}\label{claim:hittingC}
If the sequence of initial states is such that $\|\xx^R(0)\|\to 0$ as $R\to\infty$, then
$R^{-1}\theta_C\pto 0,$ as $R\to\infty$.
\end{claim}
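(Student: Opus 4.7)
The plan is to reduce the claim to a direct application of Markov's inequality, using Lemma~\ref{lem:hitting} as the principal quantitative input. The only subtlety is keeping straight the two different meanings of the scaling: Lemma~\ref{lem:hitting} bounds $\expt(\theta_C \mid \XX(0))$ in terms of the \emph{unscaled} norm $\|\XX(0)\|$, whereas the hypothesis of the claim is phrased in terms of the conventional fluid-scaled norm $\|\xx^R(0)\| = \|\XX^R(0)\|/R$.

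First, I would dispose of the degenerate case: if $\|\XX^R(0)\| \leq C$, then $\XX^R(0) \in \cC$, so $\theta_C = 0$ and the contribution to $\Pro{R^{-1}\theta_C > \varepsilon}$ is zero. Hence it suffices to consider indices $R$ for which $\|\XX^R(0)\| > C$, where Lemma~\ref{lem:hitting} applies and gives
\[
\expt(\theta_C \mid \XX^R(0)) \leq C_1 \|\XX^R(0)\| = C_1 R \|\xx^R(0)\|.
\]
Dividing by $R$ yields $\expt(R^{-1}\theta_C \mid \XX^R(0)) \leq C_1 \|\xx^R(0)\|$, which by hypothesis tends to $0$ as $R\to\infty$.

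Finally, I would apply (conditional) Markov's inequality: for any $\varepsilon > 0$,
\[
\Pro{R^{-1}\theta_C > \varepsilon \mid \XX^R(0)} \leq \frac{C_1 \|\xx^R(0)\|}{\varepsilon},
\]
and taking expectations and passing to the limit $R\to\infty$ gives $\Pro{R^{-1}\theta_C > \varepsilon} \to 0$, i.e.\ $R^{-1}\theta_C \pto 0$.

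There is no genuine obstacle here; the only point requiring care is the bookkeeping between $\|\XX^R(0)\|$ and $\|\xx^R(0)\|$ when invoking Lemma~\ref{lem:hitting}, together with the harmless separate treatment of the $\|\XX^R(0)\|\leq C$ case. All the real work has already been done in establishing the linear-in-norm hitting-time bound of Lemma~\ref{lem:hitting}.
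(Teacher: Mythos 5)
Your proof is correct and follows exactly the same route as the paper: Lemma~\ref{lem:hitting} gives $R^{-1}\expt(\theta_C\mid\XX^R(0))\le C_1\|\xx^R(0)\|\to 0$, and convergence in probability then follows from Markov's inequality. The paper states this more tersely (it does not explicitly separate out the case $\|\XX^R(0)\|\le C$ nor name Markov's inequality), so your version is a slightly more carefully bookkept rendering of the identical argument.
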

\noindent
Now pick any (unscaled) state $z\in\cC$, and define the stopping time $\htheta_z$ as 
\[\htheta_z:=\inf\big\{t\geq 0: \XX(t) = z\big\}.\]
Since due to Part (2) of the backward induction hypothesis, the unscaled process $\XX(\cdot)$ is irreducible and positive recurrent, we have the following claim.
\begin{claim}\label{claim:hittingz}
If the sequence of initial states is such that $\xx^R(0)\in\cC$, then $R^{-1}\htheta_z\pto 0$, as $R\to\infty.$
\end{claim}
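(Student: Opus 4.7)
The plan is to observe that $\cC$, viewed as a set of full Markov states (including the auxiliary variables $\Delta_0^N$ and $\Delta_1^N$, and disregarding the always-infinite coordinates indexed by $\cK$), is in fact a \emph{finite} set. Indeed, the norm bound $\|\XX\|\leq C$ restricts each of the $N-k(N)$ finite queue lengths to $\{0,1,\ldots,C\}$, while $\Delta_0^N,\Delta_1^N\in\{0,1,\ldots,N\}$, so $|\cC|\leq (C+1)^{N-k(N)}(N+1)^2<\infty$. This finiteness is the key ingredient, and it is the reason I would be careful to augment the state description with the auxiliary variables before invoking recurrence.

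Next, I would appeal to Part~(2) of the backward induction hypothesis: the (unscaled) Markov chain describing the subsystem of finite queues together with $(\Delta_0^N,\Delta_1^N)$ is irreducible and positive recurrent. Classical Markov chain theory then yields that $\expt(\htheta_z\mid\XX(0)=x)<\infty$ for every initial state $x$. Because $\cC$ is a finite set of states, the uniform bound
\[
M\;:=\;\max_{x\in\cC}\expt\bigl(\htheta_z\mid\XX(0)=x\bigr)\;<\;\infty
\]
follows automatically, and crucially $M$ does not depend on $R$.

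Finally, Markov's inequality delivers the conclusion: for any $\varepsilon>0$ and any initial state $\XX^R(0)\in\cC$,
\[
\Pro{R^{-1}\htheta_z>\varepsilon\,\big|\,\XX^R(0)}\;\leq\;\frac{M}{\varepsilon R}\;\longrightarrow\;0\qquad\text{as }R\to\infty,
\]
which is precisely $R^{-1}\htheta_z\pto 0$.

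The argument is deliberately short: all the heavy lifting is done by Part~(2), which furnishes positive recurrence and hence finite expected hitting times, while the bounded-norm condition plus the finite ranges of $\Delta_0^N$ and $\Delta_1^N$ make the uniform-in-starting-state bound a triviality rather than a real obstacle. The only point requiring care is the one already flagged in the first paragraph, namely to identify the correct state space (finite queues plus auxiliary variables) so that ``$\cC$ is finite'' and ``irreducible positive recurrent'' can be combined; everything else is routine.
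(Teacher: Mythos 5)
Your proof is correct and fills in exactly the reasoning the paper only sketches: the paper simply states that Claim~\ref{claim:hittingz} follows because the unscaled process is irreducible and positive recurrent by Part~(2). Your argument -- noting that $\cC$ (as a set of full Markov states, i.e., finite queue lengths plus the auxiliary server-status coordinates) is finite, that positive recurrence gives finite expected hitting time from each such state, that finiteness of $\cC$ yields an $R$-independent uniform bound $M$, and that Markov's inequality then gives $R^{-1}\htheta_z \pto 0$ -- is precisely the standard route that the paper implicitly relies on, and you are right to flag that the state space must be augmented with the auxiliary variables for the finiteness observation to apply.
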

\noindent
Up to time $\htheta_z$, consider the product topology on the sequence space.
Then Claims~\ref{claim:hittingC} and~\ref{claim:hittingz} yield that for a sequence of initial states such that $\|\xx^R(0)\|\to 0$ as $R\to\infty$, there exists a subsequence $\{R\}$, along which with probability 1, $R^{-1}\htheta_z\to 0$.
Starting from the time $\htheta_z$, along the above subsequence, we construct the sequence of processes $\xx^R(\cdot)$ on the same probability space as follows.\\

\noindent
(1) Define the space of an infinite sequence of i.i.d.~renewal cycles of the unscaled process $\XX(\cdot)$, with the unscaled state $z$ being the renewal state, i.e.,
\[\Big\{\XX^{(i)}(t): 0\leq t\leq \htheta_z^{(i)}, \XX^{(i)}(0)=z\Big\}\] 
for $i=1,2,\ldots$ are i.i.d.~copies, and $\htheta_z^{(i)}$ are also i.i.d.~copies of $\htheta_z$.\\

\noindent
(2) Define the process $\XX^R(\cdot)$ as 
\[\XX^R(Rt)= \sum_{i=1}^\infty \XX^{(i)}\big(Rt- \Theta(i-1)\big)\ind{\Theta(i-1)\leq Rt<\Theta(i)}, \quad \text{where}\quad
\Theta(i):= \sum_{j=1}^i\htheta^{(j)}.\]

\noindent
Let $A(t)$ denote the cumulative number of arrivals up to time $t$ to a fixed server with infinite queue length when the system starts from the state $z$.
Now, in order to calculate the drift of each of the infinite queues, 
observe that cumulative number of arrivals up to time $Rt$ to server  $n\in\cK$ in the $R$-th system can be written as
\begin{align*}
A_n^R(Rt) = \sum_{i=1}^{N_\theta^R} A_n^{(i)}+B_n(t-\Theta(N_\theta^R)), \quad \text{where}\quad N_\theta^R:= \max\{j:\Theta(j)\leq Rt\}.
\end{align*}
 $A_n^{(i)}$'s are i.i.d.~copies of the random variable $A(\htheta_z)$, $B_n(\cdot)$ is distributed as $A(t)$, and $A_n^{(i)}$'s and $B_n(\cdot)$ are independent of the random variable $N_\theta^R$.
 Now, since due to Part (2) of the backward induction hypothesis the subsystem consisting of the finite queues is stable, $\XX(\cdot)$ is irreducible and positive recurrent.
 Thus, we have $\expt(\htheta_z|\XX(0)=z)<\infty$, and hence, with probability 1, 
 \[\frac{N_\theta^R}{R}\to \frac{t}{\expt(\htheta_z|\XX(0)=z)}, \quad\mbox{as}\quad R\to\infty.\]
 Thus, using Part (3) of the backward induction hypothesis, SLLN yields, with probability~1,
 \begin{align*}
 \frac{1}{R}A_n^R(Rt) &= \frac{1}{R}\sum_{i=1}^{N_\theta^R} A_n^{(i)}+\frac{B_n(t-\Theta(N_\theta^R))}{R}\to \ha t, \quad\mbox{as}\quad R\to\infty,
 \end{align*}
 for some $\ha\leq 1-\varepsilon(k(N))$.
 Therefore, in the conventional fluid limit, $a_n(t)\leq  (1-\varepsilon(k(N)))t$. 
 Also, since the departure rate from each of the servers with infinite queue lengths is always~1, it can be seen that in the conventional fluid limit, $d_n(t) = t$, and thus, the drift of the $n$-th infinite queue is given by at most $-\varepsilon(k(N))$.
Combining the probability 1 convergence of the time $\htheta_z$ to 0, and the probability space constructed after time $\htheta_z$, we obtain that along the subsequence $\{R\}$ with probability 1, the fluid-scaled processes converges to a limit where each infinite queue has drift at most $-\varepsilon(k(N))$.
This completes the verification of Part (4), and hence of Proposition~\ref{prop:largeN}.

\section{Mean-field analysis for large-scale asymptotics}\label{sec:mf}
In this section we will analyze the large-$N$ behavior of the system.
In particular, we will prove Lemmas~\ref{lem:expo-bound} and~\ref{lem:steady-concentration}.
The next proposition is a basic mean-field fluid limit result that we need later.
Define 
\[E_\kappa := \Big\{(\bld{q},\dd)\in [0,1]^\infty:  q_i\geq q_{i+1}\geq \kappa,\ \forall i, \  \delta_0+\delta_1+ q_1\leq 1 \Big\}.\]

\begin{proposition}
\label{prop:mf}
Assume $k(N)/N\to\kappa\in [0,1]$ and the sequence of initial states $(\qq^N(0),\bld{\delta}^N(0))$ converge to a fixed $(\qq(0),\bld{\delta}(0))\in E_\kappa$, as $N\to\infty$,  where $q_1(0)>0$. 
Then, 
with probability 1, any subsequence of $\{N\}$ has a further subsequence along which $\{(\qq^N(t),\dd^N(t))\}_{t\geq 0}$ converges, uniformly on compact time intervals, to some deterministic trajectory $\{(\qq(t),\dd(t))\}_{t\geq 0}$ satisfying the following equations:
\begin{align*}
 q_i(t)&=q_i(0) +\int_0^t\lambda p_{i-1}(\qq(s),\dd(s),\lambda)\dif s
- \int_0^t(q_i(s)-q_{i+1}(s))\dif s,\ i\ge 1 \\
\delta_0(t)&=\delta_0(0)+\mu\int_0^t u(s)\dif s-\xi(t),  \\
\delta_1(t)&=\delta_1(0)+\xi(t)-\nu\int_0^t\delta_1(s)\dif s,\nonumber
\end{align*}
where 
\begin{align*}
u(t) &= 1- q_1(t) - \delta_0(t) - \delta_1(t),\\
\xi(t) &= \int_0^t\lambda(1-p_0(\qq(s),\dd(s),\lambda))\ind{\delta_0(s)>0}\dif s.
\end{align*}
For any $(\qq,\dd)\in E$, $\lambda>0$, $(p_i(\qq,\dd,\lambda))_{i\geq 0}$ are given by 
\begin{align*}
 p_0(\qq,\dd,\lambda) &= 
\begin{cases}
&1\qquad \text{if}\qquad u=1-q_1-\delta_0-\delta_1>0,\\
&\min \{\lambda^{-1}(\delta_1\nu + q_1-q_2), 1\},\quad\text{otherwise,}
\end{cases}\\
\quad p_i(\qq,\dd,\lambda)  &= (1-p_0(\qq,\dd,\lambda)) (q_{i}-q_{i+1})q_1^{-1},\  i \ge 1.
\end{align*}
\end{proposition}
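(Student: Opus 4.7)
The plan is to prove convergence along the standard route for density-dependent Markov chains --- semimartingale decomposition, tightness, identification of the limit --- with extra care required because of (i) the infinite-dimensional state space and (ii) the discontinuity of the routing probability $p_0$ at $u=0$. First I would write, for each $i\ge 1$,
\[
Q_i^N(t) = Q_i^N(0) + A_i^N(t) - D_i^N(t),
\]
where $A_i^N,D_i^N$ are counting processes with predictable intensities $\lambda N\, p_{i-1}(\qq^N(s),\dd^N(s),\lambda)$ and $Q_i^N(s)-Q_{i+1}^N(s)$; analogous decompositions for $\Delta_0^N,\Delta_1^N$ follow directly from the TABS transition rules. Dividing by $N$ and compensating each counting process expresses $\qq^N,\dd^N$ as the claimed integrals plus martingales $M_i^N$, and since each jump shifts a scaled coordinate by $\pm 1/N$ while jump rates are $O(N)$, Doob's maximal inequality gives $\expt\bigl[\sup_{s\le T}(M_i^N(s))^2\bigr]=O(1/N)$, so each martingale vanishes uniformly on compact intervals.

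Next I would establish tightness in $D_E[0,\infty)$ with $E$ under the product topology. The instantaneous rate of change of each coordinate is uniformly bounded (by $\lambda+2$, say), so each coordinate process is Lipschitz with an $N$-independent constant, and together with the bounds $q_i^N,\delta_0^N,\delta_1^N\in[0,1]$, Arzel\`a-Ascoli in each coordinate plus a diagonal argument yield relative compactness. Skorokhod's representation theorem then lets us work on a probability space where convergence is almost sure along a subsequence, and passing to the limit in the integral representation is routine wherever the drift is continuous in $(\qq,\dd)$.

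The chief obstacle is the discontinuity of $p_0$ at $u=0$: when $u^N(t)$ is bounded below, $p_0\equiv 1$ and all arrivals go to idle-on servers, but when $u^N(t)$ hovers near $0$, newly produced idle-on servers (created at rate $\Delta_1^N\nu+Q_1^N-Q_2^N$ from setup completions and services in length-one queues) are consumed essentially instantly by the $\lambda N$ arrival stream. The clean way to encode this is to work with the cumulative process $\xi^N(t):=\int_0^t \lambda(1-p_0^N(s))\ind{\delta_0^N(s)>0}\,\dif s$; showing $(\xi^N)$ is tight (its derivative lies in $[0,\lambda]$) and invoking a time-scale separation / averaging argument, one concludes that whenever the limit satisfies $u(t)=0$ the instantaneous idle-on balance forces $\lambda(1-p_0)=\delta_1\nu+q_1-q_2$ (clipped at $\lambda$), which is precisely the claimed formula for $p_0$. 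The assumption $q_1(0)>0$ enters here to guarantee $q_1(t)>0$ on the convergence horizon so that the factor $q_1^{-1}$ in the expressions for $p_i$, $i\ge 1$, is well-defined. Uniqueness of the limiting trajectory then follows by noting the resulting right-hand side is piecewise Lipschitz in $(\qq,\dd)$, which upgrades the subsequential convergence to full-sequence convergence.
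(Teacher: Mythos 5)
The paper itself omits the proof of this proposition, stating only that the result ``is standard and is obtained using Functional Strong LLN, for example as in \cite{Stolyar15,Stolyar17,MDBL17}.'' Your sketch takes the parallel but distinct semimartingale route: compensate the jump counting processes, kill the martingale noise via Doob, establish tightness from uniformly bounded jump rates, and pass to the limit. The cited papers instead represent the jumps via driving unit-rate Poisson processes and invoke the functional strong law of large numbers, which yields a.s.\ u.o.c.\ convergence directly. Both are legitimate; your route has the advantage of not needing an explicit Poisson-driven construction, at the cost of a Skorokhod-representation step. You also correctly identify the one non-routine ingredient, namely the averaging/time-scale-separation argument that produces the $\min$ structure in $p_0$ at the boundary $u=0$.

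That said, there are several imprecisions worth flagging. First, the prelimit intensity of $A_i^N$ is \emph{not} $\lambda N\, p_{i-1}(\qq^N(s),\dd^N(s),\lambda)$: in the $N$-server system an arrival goes to an idle-on server whenever $U^N(s)>0$ (so $A_1^N$ has intensity $\lambda N\, \ind{U^N(s)>0}+\lambda N\,(Q_0^N-Q_1^N)/Q_1^N\cdot\ind{U^N(s)=0}$, with the obvious interpretation, and $A_i^N$ for $i\geq 2$ has intensity $\lambda N\,(Q_{i-1}^N-Q_i^N)/Q_1^N\cdot\ind{U^N(s)=0}$), and the $\min\{\lambda^{-1}(\delta_1\nu+q_1-q_2),1\}$ appearing in $p_0$ is a product of the averaging, not a prelimit rate. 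Your later paragraph shows you know this, so it is a matter of being careful in the writeup that the compensator carries the true step-function rate and that the $p_i$ formulas emerge only at the end. Second, the balance identity at $u=0$ is written backwards: it should be $\lambda p_0=\min\{\delta_1\nu+q_1-q_2,\lambda\}$, not ``$\lambda(1-p_0)=\delta_1\nu+q_1-q_2$ clipped at $\lambda$.'' Third, the concluding uniqueness step is both unnecessary (the proposition claims only subsequential convergence to \emph{some} trajectory satisfying the integral equations) and not actually justified by ``piecewise Lipschitz'': the indicator $\ind{\delta_0(s)>0}$ inside $\xi$ makes the right-hand side genuinely discontinuous in the state, and one would need a more delicate argument to rule out multiple solutions. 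None of these affects the validity of the subsequential-limit claim, which is all the paper uses downstream.
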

This type of result is standard and is obtained using Functional Strong LLN, for example as in~\cite{Stolyar15,Stolyar17, MDBL17}; we omit its proof.
Also, we note that, while Proposition~\ref{prop:mf} is a version of \cite[Theorem 3.1]{MDBL17}, it is different in that it is suitably modified for the case of infinite buffers and some queues being infinite, and it states a somewhat different type of convergence, convenient for the use in this paper.
Define \emph{mean-field fluid sample path} (MFFSP) to be any deterministic trajectory satisfying the properties stated in Proposition~\ref{prop:mf}.

We now provide an intuitive explanation of the mean-field fluid limit stated in Proposition~\ref{prop:mf}.
It is similar to that behind~\cite[Theorem 3.1]{MDBL17}.
The term $u(t)$ corresponds to the asymptotic fraction of idle-on servers in the system at time $t$, and $\xi(t)$ represents the asymptotic cumulative number of server setups (scaled by $N$) that have been initiated during $[0,t]$.
The coefficient $p_i(\qq,\dd,\lambda)$ can be interpreted as the instantaneous fraction of incoming tasks that are assigned to some server with queue length $i$, when the fluid-scaled occupancy state is $(\qq,\dd)$ and the scaled instantaneous arrival rate is $\lambda$.
Observe that as long as $u>0$, there are idle-on servers, and hence all the arriving tasks
will join idle servers. 
This explains that if $u>0$, $p_0(\qq,\dd,\lambda) = 1$ and $p_i(\qq,\dd,\lambda)=0$ for $i=1,2,\ldots$.
If $u=0$, then observe that 
servers become idle at rate $q_1-q_2$, and servers in setup mode turn on at rate $\delta_1\nu$.
Thus the  idle-on servers are created at a total rate $\delta_1\nu + q_1-q_2$.
If this rate is larger than the arrival rate $\lambda$, then almost all the arriving tasks can be assigned to idle servers.
Otherwise, only a fraction $(\delta_1\nu + q_1-q_2)/\lambda$
of arriving tasks join idle servers. 
The rest of the tasks are distributed uniformly among busy servers, so a proportion $(q_{i}-q_{i+1})q_1^{-1}$ are assigned to servers having queue length~$i$.
For any $i=1,2, \ldots$, $q_i$ increases when there is an arrival to some server with queue length $i-1$, which occurs at rate $\lambda p_{i-1}(\qq,\dd,\lambda)$, and it decreases when there is a departure from some server with  queue length~$i$, which occurs at rate $q_i-q_{i-1}$. 
Since each idle-on server turns off at rate $\mu$, the fraction of servers in the off mode increases at rate 
$\mu u$.
Observe that if $\delta_0>0$, for each task that cannot be assigned to an idle server, a setup procedure is initiated  at one idle-off server. 
As noted above, $\xi(t)$ captures the (scaled) cumulative number of setup procedures initiated up to time~$t$.
Therefore the fraction of idle-off servers and the fraction of servers in setup mode decreases and increases by $\xi(t)$, respectively, during $[0,t]$.
Finally, since each server in setup mode becomes idle-on at rate $\nu$, the fraction of servers in setup mode decreases at rate $\nu\delta_1$.

\subsection{Proof of \texorpdfstring{Lemma 3.2}{Lemma~\ref{lem:expo-bound}}}
Throughout this subsection we will prove Lemma~\ref{lem:expo-bound}.
Within this proof we will use the following terminology. 
Let $A^N$ be an event pertaining to $N$-th system. We will write $\Pro{A^N} = \eta(N)$ to mean the following property: \emph{There exist $C>0$  and $N_1>0$ such that $\Pro{A^N} \le \e^{-C N}$ for all $N \ge N_1$.}
If event $A^N$ depends on some parameter $p$ (say, the process initial state), we say that $\Pro{A^N} = \eta(N)$ \emph{uniformly in $p$} if the 
property holds for common fixed $C>0$  and $N_1>0$.

To prove the lemma, clearly, it suffices to prove that for some fixed $T_0>0$ and $\varepsilon_0>0$
\begin{equation}\label{eq:q-1-lower}
\Pro{q_1^N(T_0)\leq \varepsilon_0} = \eta(N),
\end{equation}
uniformly on the process initial states $(\qq^N(0),\dd^N(0))$. This is what we do in the rest of the proof.

Fix any $T_0>0$; $\varepsilon_0>0$ will be chosen later. We now prove several claims, which rather simply follow from 
the process structure and
basic large deviations estimates (specifically, Cramer's theorem) -- they will serve as building blocks for the proof argument.

\begin{claim}
\label{claim-busy-decay}
{\normalfont(i)} For any $\varepsilon>0$, uniformly in $\tau\in [0,T_0]$ and uniformly in $q_1^N(0) \ge \varepsilon$, 
\begin{equation}
\label{eq-busy-decay}
\Pro{q_1^N(\tau) \le (\varepsilon/2) \e^{-T_0}} = \eta(N).
\end{equation}
{\normalfont(ii)} For any $\varepsilon>0$, uniformly in $\tau\in [0,T_0]$ and uniformly in $\delta_1^N(0) \ge \varepsilon$, 
\begin{equation}
\label{eq-setup-decay}
\Pro{\delta_1^N(\tau) \le (\varepsilon/2) \e^{-\nu T_0}} = \eta(N).
\end{equation}
\end{claim}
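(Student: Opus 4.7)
The plan is to prove both parts by the same elementary strategy: lower-bound $q_1^N(\tau)$ (resp.\ $\delta_1^N(\tau)$) by a sum of i.i.d.\ Bernoulli random variables, one for each server initially in the relevant state, and then invoke Cramer's (Chernoff) bound. The idea is that a server that is busy at time $0$ remains busy at least as long as the task currently in service remains so, and a server whose setup is in progress at time $0$ remains in setup mode until its setup completes --- which, under the TABS scheme, is never aborted. Memorylessness of the $\expn(1)$ service and $\expn(\nu)$ setup clocks then yields mutually independent Bernoulli indicators across servers.

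For part (i), let $B^N \subseteq \{1,\ldots,N\}$ be the set of servers with positive queue length at time $0$; by hypothesis $|B^N| \ge \varepsilon N$. For each $i \in B^N$, let $S_i$ denote the residual service time of the task currently in service on server $i$ at time $0$. By memorylessness of the exponential service distribution, the $S_i$ are i.i.d.\ $\expn(1)$, and $\{S_i > \tau\}$ implies that server $i$ is still busy at time $\tau$. Hence
\[
Q_1^N(\tau) \;\ge\; \sum_{i \in B^N} \ind{S_i > \tau},
\]
and the right-hand side stochastically dominates a $\mathrm{Binomial}(\lceil \varepsilon N \rceil, \e^{-T_0})$ random variable $Z$, since $\e^{-\tau} \ge \e^{-T_0}$ uniformly in $\tau \in [0, T_0]$. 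The mean of $Z$ is at least $\varepsilon \e^{-T_0} N$, and a standard Chernoff bound at relative deviation $1/2$ yields
\[
\Pro{q_1^N(\tau) \le (\varepsilon/2) \e^{-T_0}} \;\le\; \Pro{Z \le (\varepsilon/2) \e^{-T_0} N} \;\le\; \exp\!\bigl(-\varepsilon \e^{-T_0} N / 8\bigr),
\]
which is of the form $\eta(N)$ with constants depending only on $\varepsilon$ and $T_0$, hence uniform in $\tau$ and in initial states satisfying $q_1^N(0) \ge \varepsilon$.

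Part (ii) is exactly analogous, with the set of servers initially in setup mode replacing $B^N$ and $\expn(\nu)$ residual setup times replacing $\expn(1)$ residual services. The key model-level input is that setup procedures are never interrupted, so every server that is in setup mode at time $0$ with residual setup time exceeding $\tau$ is still in setup mode at time $\tau$; this occurs with probability $\e^{-\nu \tau} \ge \e^{-\nu T_0}$, independently across servers. Applying the same Chernoff bound with mean at least $\varepsilon \e^{-\nu T_0} N$ produces the desired exponential decay uniformly in $\tau \in [0,T_0]$ and in $\delta_1^N(0) \ge \varepsilon$.

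I do not anticipate any real obstacle here: the claim plays the role of an elementary large-deviations building block for the harder arguments that follow. The only minor care is to (a) exploit memorylessness to obtain i.i.d.\ exponential clocks at time $0$, (b) replace the $\tau$-dependent success probabilities $\e^{-\tau}$ and $\e^{-\nu\tau}$ by their $\tau$-uniform lower bounds $\e^{-T_0}$ and $\e^{-\nu T_0}$ before applying Chernoff, and (c) use the fact that setups are never aborted under TABS, which is what makes the one-sided indicator argument in part (ii) go through.
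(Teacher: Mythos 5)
Your proof is correct and follows essentially the same route as the paper's: lower-bound the busy (resp.\ setup) count at time $\tau$ by the number of initially busy (resp.\ setup) servers whose memoryless residual clock exceeds $\tau$, then apply Cramer/Chernoff to the resulting binomial, using $\e^{-\tau}\ge\e^{-T_0}$ (resp.\ $\e^{-\nu\tau}\ge\e^{-\nu T_0}$) for uniformity in $\tau$. The paper states this in two sentences and cites Cramer's theorem; you merely spell out the stochastic-domination and i.i.d.\ Bernoulli structure explicitly, which is accurate, including the key observation that setups are never aborted.
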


\noindent Indeed, to prove (\ref{eq-busy-decay}), observe that
any busy server at time $t$ stays busy in the interval $[t,t+\tau]$ with probability at least $\e^{-\tau}\ge \e^{-T_0}$. It remains to recall that
$q_1^N(0) \ge \varepsilon$ corresponds to at least $\varepsilon N$ busy servers in the unscaled system and apply Cramer's theorem. 
Statement~(ii) is proved analogously.

\begin{claim}
\label{claim-T1}
For any sufficiently small $T_1>0$, there exists $\varepsilon'_1>0$ such that, uniformly in the initial state $(\qq^N(0),\dd^N(0))$,
\begin{equation}
\label{eq-T1}
\Pro{q_1^N(T_1) + \delta_1^N(T_1) \le \varepsilon'_1} = \eta(N).
\end{equation}
\end{claim}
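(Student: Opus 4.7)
The plan is to combine Claim~\ref{claim-busy-decay} with a short ``filling-up'' argument that drives the total mass $\Phi^N(t):=Q_1^N(t)+\Delta_1^N(t)$ away from zero within the small time window $[0,T_1]$. Fix a constant $\varepsilon>0$, whose size will be chosen as a suitable multiple of $\lambda T_1$ in a moment, and introduce the hitting time
\[
\sigma_N \;:=\; \inf\bigl\{t\ge 0:\, q_1^N(t)+\delta_1^N(t)\ge 2\varepsilon\bigr\}.
\]
The argument then proceeds in two steps: first I show that $\sigma_N\le T_1$ with probability $1-\eta(N)$, uniformly in the initial state, and second I apply Claim~\ref{claim-busy-decay} after time~$\sigma_N$ via the strong Markov property to control $q_1^N(T_1)+\delta_1^N(T_1)$.

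For the filling-up step, observe that on $\{\sigma_N>T_1\}$ we have $\Phi^N(t)<2\varepsilon N$ throughout $[0,T_1]$, hence $U^N(t)+\Delta_0^N(t)>(1-2\varepsilon)N$ as well. Taking $\varepsilon<1/2$, this forces every arrival in $[0,T_1]$ to increment $\Phi^N$ by exactly one: either a green message is available and the arrival converts an idle-on server into a busy one, or else all servers are busy while an idle-off server is present and the arrival triggers a setup. At the same time $\Phi^N$ can decrement only through completions of length-one queues (aggregate rate $Q_1^N-Q_2^N$) or completions of setup timers (aggregate rate $\nu\Delta_1^N$), so the total decrement rate is bounded by $(1+\nu)\Phi^N\le 2(1+\nu)\varepsilon N$ on $[0,T_1\wedge\sigma_N]$. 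A standard thinning coupling of the service and setup completion clocks with an autonomous Poisson process then dominates the number of decrements $D^N(T_1\wedge\sigma_N)$ by a Poisson$(2(1+\nu)\varepsilon NT_1)$ variable, while the number of arrivals $A^N(T_1)$ is itself Poisson$(\lambda NT_1)$. Applying Cramer's theorem to each, with probability $1-\eta(N)$,
\[
A^N(T_1)\ge \tfrac{3}{4}\lambda NT_1, \qquad D^N(T_1\wedge\sigma_N)\le 4(1+\nu)\varepsilon NT_1.
\]
On the intersection of these two events, if additionally $\sigma_N>T_1$, then $\Phi^N(T_1)\ge \tfrac{3}{4}\lambda NT_1-4(1+\nu)\varepsilon NT_1$. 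Choosing $\varepsilon$ of order $\lambda T_1$, say $\varepsilon=\lambda T_1/16$, makes the right-hand side exceed $2\varepsilon N$ once $T_1$ is small enough, which contradicts $\sigma_N>T_1$. Hence $\Pro{\sigma_N>T_1}=\eta(N)$, uniformly in the initial state.

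For the second step, on $\{\sigma_N\le T_1\}$ we have $q_1^N(\sigma_N)+\delta_1^N(\sigma_N)\ge 2\varepsilon$, so at least one of $q_1^N(\sigma_N)\ge\varepsilon$ or $\delta_1^N(\sigma_N)\ge\varepsilon$ holds. Taking $T_1\le T_0$ and applying the strong Markov property at~$\sigma_N$ together with Claim~\ref{claim-busy-decay} -- whose uniformity in $\tau\in[0,T_0]$ and in the starting state is essential here -- yields that with probability $1-\eta(N)$ either $q_1^N(T_1)\ge(\varepsilon/2)e^{-T_0}$ or $\delta_1^N(T_1)\ge(\varepsilon/2)e^{-\nu T_0}$. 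Setting $\varepsilon'_1:=(\varepsilon/2)\min\{e^{-T_0},e^{-\nu T_0}\}$ and combining with Step~1 completes the proof.

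The main technical obstacle I expect is the stochastic-domination argument in Step~1: one must construct a coupling that bounds $D^N(\cdot\wedge\sigma_N)$ by an autonomous Poisson process so that Cramer's bound applies \emph{uniformly} in the (arbitrary) initial state of the underlying Markov chain. Once this coupling and the two Cramer estimates are in place, the remainder of the proof is essentially bookkeeping with the constants $\varepsilon$, $\varepsilon'_1$, $T_0$, and $T_1$.
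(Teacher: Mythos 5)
Your proof is correct, and it takes a genuinely different route from the paper's. The paper argues directly by partitioning the initial states: if $q_1^N(0)\ge 1/4$ or $\delta_1^N(0)\ge 1/4$ it invokes Claim~\ref{claim-busy-decay} immediately; otherwise $\delta_0^N(0)+u^N(0)>1/2$, and since the (Poisson) number of arrivals in $[0,T_1]$ concentrates near $\lambda T_1 N<N/4$, the idle/off reservoir cannot be exhausted, so w.h.p.\ \emph{every} arrival creates a new busy or setup server; it then counts, via Cramer, the $\Theta(N)$ of these that persist until $T_1$ (each independently surviving with probability at least $\e^{-\nu' T_1}$). You instead introduce the hitting time $\sigma_N$ of the level set $\{q_1+\delta_1\ge 2\varepsilon\}$, show $\sigma_N\le T_1$ w.h.p.\ by comparing the Poisson arrival count against a Poisson-dominated bound on the decrement count of $\Phi^N=Q_1^N+\Delta_1^N$, and then restart via the strong Markov property and re-use Claim~\ref{claim-busy-decay} on the residual time $T_1-\sigma_N$; the uniformity in $\tau$ and in the initial state built into that claim is indeed exactly what makes this restart legitimate, as you note. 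Both approaches ultimately rest on the same two ingredients (each arrival increments $Q_1+\Delta_1$ while the reservoir is non-empty; Cramer-type concentration), but the paper's version is more elementary in that it never needs a stopping time, a strong-Markov restart, or a coupling/domination bound for the decrement process -- it works in one shot from time $0$. Your version is more modular: the stopping-time formulation cleanly separates the ``fill-up'' phase from the ``persistence'' phase and does not require a case split on the initial state, at the cost of the (correctly handled) stochastic-domination coupling and the strong-Markov bookkeeping you flagged.
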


\noindent Indeed, fix any $T_1>0$ such that $\lambda T_1 \le 1/4$. Suppose first that either $q_1^N(0)\ge 1/4$ or $\delta_1^N(0)\ge 1/4$; uniformly on all such initial conditions, the claim follows by using Claim~\ref{claim-busy-decay}. 
Suppose now that $q_1^N(0)< 1/4$ and $\delta_1^N(0)< 1/4$, and therefore $\delta_0^N(0)+u^N(0) > 1/2$, where recall that $u^N$ is the fraction of idle-on servers. 
The (unscaled) number of new customer arrivals in $[0,T_1]$, denote it by $H[0,T_1]$, is Poisson with mean $\lambda T_1 N$; therefore, 
$$\Pro{| H[0,T_1]/N - \lambda T_1 | \ge (1/2) \lambda T_1} = \eta(N).$$ 
This means that with probability $1-\eta(N)$,  
we have $H[0,T_1]/N < \delta_0^N(0)+u^N(0)$, and therefore
each arrival in $[0,T_1]$ creates either a new busy server or a new setup server; furthermore, each of these newly created busy or setup servers will not change its state until time $T_1$ with probability at least $\e^{-\nu' T_1}$, where $\nu' = \max \{\nu,1\}$.
It remains to choose $\varepsilon'_1 \in (0, (1/4) \lambda T_1 \e^{-\nu' T_1})$ to obtain the claim.

\begin{claim}
\label{claim-T2}
For any $\varepsilon_1>0$ and any $T_2>0$, there exists $\varepsilon'_2>0$ such that, uniformly in $\delta_1^N(0) \ge \varepsilon_1$,
\begin{equation}
\label{eq-T2}
\Pro{q_1^N(T_2) + u^N(T_2) \le \varepsilon'_2} = \eta(N).
\end{equation}
\end{claim}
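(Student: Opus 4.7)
The plan is to exploit the independence of each server's intrinsic clocks (setup, standby, service) across servers, while the global arrival process couples them only through upper bounds that can be controlled with Cramer-type estimates. Starting from at least $\varepsilon_1 N$ initial setup servers, I restrict attention to those whose setup clock fires in a short terminal window $[T_2 - \rho,\, T_2]$, for a sufficiently small $\rho > 0$ (depending on $\varepsilon_1, T_2, \nu, \mu$) to be chosen at the end. The heuristic is that a server activating so close to $T_2$ has little time to undergo many state transitions, so it fails to land in the ``busy or idle-on'' set at $T_2$ only via a full cycle idle-on $\to$ busy $\to$ idle-on $\to$ idle-off, which requires an atypically fast $\expn(1)$ service completion.

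Formally, working in the standard Poisson-driven construction of the Markov process, equip each server $i$ with three mutually independent auxiliary clocks (also independent across $i$): the setup completion $T_i^* \sim \expn(\nu)$, the first idle-on standby clock $Z_1^i \sim \expn(\mu)$, and the first post-activation service time $S_1^i \sim \expn(1)$. Define
\[
\tilde E_i \;:=\; \{T_i^* \in [T_2 - \rho,\, T_2]\} \cap \{Z_1^i > \rho\}.
\]
Then $\{\tilde E_i\}$ are independent Bernoulli$(\tilde p)$ events with $\tilde p := (\e^{-\nu(T_2-\rho)} - \e^{-\nu T_2})\,\e^{-\mu\rho} > 0$, and Cramer's theorem yields $|I| \geq (\tilde p\,\varepsilon_1 / 2)\, N$ with probability $1 - \eta(N)$, where $I := \{i : \tilde E_i \text{ occurs}\}$. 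On $\tilde E_i$, server $i$ is freshly idle-on at $T_i^*$ and its first standby clock does not fire in the remaining interval of length $\le \rho$; hence if server $i$ is idle-off at $T_2$ it must have completed at least one service in $[T_i^*, T_2]$, which in particular forces $S_1^i < \rho$.

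The exception count is therefore bounded by $|\{i \in I : S_1^i < \rho\}|$. Since the $S_1^i$ are independent of each other and of the $\sigma$-algebra generated by the setup and standby clocks and the arrival process, conditional on $I$ this quantity is $\text{Binomial}(|I|,\, 1-\e^{-\rho})$. A Chernoff bound shows it is at most $2|I|(1-\e^{-\rho}) \le 2|I|\rho$ with probability $1 - \eta(N)$. Choosing $\rho \le 1/4$ gives exception count $\le |I|/2$, so at least $|I|/2 \ge (\tilde p\,\varepsilon_1/4)\,N$ servers in $I$ lie in the ``busy or idle-on'' set at $T_2$, which yields the claim with $\varepsilon'_2 := \tilde p\,\varepsilon_1/4$. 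The main subtlety is that the fates of different servers are \emph{not} independent, since they share the arrival process; a naive union bound across the $\varepsilon_1 N$ servers would fail. The two-layer independence engineering above -- the primary event $\tilde E_i$ using only server-intrinsic setup/standby clocks, and the exception bound using the independent service times -- circumvents this, with the shared arrival process playing no role in either layer.
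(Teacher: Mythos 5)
Your argument is correct and reaches the same $\eta(N)$ estimate, but via a decomposition that differs from the paper's. The paper asserts directly that each of the $\ge\varepsilon_1 N$ initial setup servers, with probability at least $(1-\e^{-\nu T_2})\,\e^{-\nu'' T_2}$ where $\nu''=\max\{\mu,\nu\}$, completes setup within $[0,T_2]$ \emph{and} then never leaves the busy/idle-on set up to time $T_2$, and invokes Cram\'er. You explicitly identify what that brief argument leaves implicit: the per-server fates are coupled through the shared arrival stream, so one cannot apply Cram\'er without first exhibiting an event that is measurable with respect to server-intrinsic, mutually independent randomness. Your two-layer device produces exactly such an event -- the $\tilde E_i$ depend only on the pair $(T_i^*,Z_1^i)$ and are i.i.d.~across servers, while the exception bound depends only on the independent first service times $S_1^i$ -- and each layer is handled by Cram\'er/Chernoff. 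The paper's route, read charitably, can also be made rigorous in the same spirit (e.g.\ drive all of server $i$'s standby resets from a single rate-$\mu$ Poisson process $\Pi_i$, so that ``$\Pi_i$ has no point in $[0,T_2]$'' is a server-intrinsic event implying no idle-off transition); it yields a cleaner, larger $\varepsilon'_2$ with no auxiliary parameter. Your route makes the independence engineering explicit and self-contained, at the cost of the extra window parameter $\rho$. Two small touch-ups are advisable: take $\rho\le\min\{T_2,1/4\}$ so that $[T_2-\rho,T_2]\subseteq[0,T_2]$, and note that a server in $I$ that is not busy or idle-on at $T_2$ could be in \emph{setup} rather than idle-off; this case is subsumed by the same reasoning, since a server can only (re-)enter setup after first passing through idle-off, which in turn forces a completed service in $[T_i^*,T_2]$ and hence $S_1^i<\rho$.
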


\noindent Indeed, at time $0$ there are at least $\varepsilon_1 N$ setup servers. Fix any $T_2>0$. In $[0,T_2]$ each of them tuns into an idle-on server with probability at least $1-\e^{-\nu T_2}$; those servers that do turn into idle-on will be either still be idle-on or busy at time $T_2$ with probability at least $\e^{-\nu'' T_2}$, where $\nu'' = \max \{\mu,\nu\}$.
It remains to choose $\varepsilon'_2 \in (0, (1/2) \varepsilon_1 \e^{-\nu'' T_2})$, and apply Cramer's theorem.

\begin{claim}
\label{claim-T3}
For any $\varepsilon_2>0$ and any sufficiently small $T_3>0$, there exists $\varepsilon_3>0$ such that, uniformly in $u^N(0) \ge \varepsilon_2$,
\begin{equation}
\label{eq-T3}
\Pro{q_1^N(T_3) \le \varepsilon_3} = \eta(N).
\end{equation}
\end{claim}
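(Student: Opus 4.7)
The plan is to choose $T_3$ small enough that (i) a linear number of tasks arrive in $[0,T_3]$, (ii) the initial pool of $\ge \varepsilon_2 N$ idle-on servers is not depleted during $[0,T_3]$, so that every arriving task immediately converts an idle-on server into a busy one, and (iii) a positive fraction of these newly-busy servers are still busy at time $T_3$. The final $\varepsilon_3$ will then be taken proportional to $\lambda T_3$.

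Concretely, I would fix $T_3 \le 1$ small enough that $\mu T_3 \le 1/8$ and $\lambda T_3 \le \varepsilon_2/8$. Let $H[0,T_3]$ denote the (unscaled) arrival count, which is Poisson$(\lambda T_3 N)$; Cramer's theorem gives
\begin{equation*}
\Pro{H[0,T_3] \notin [(\lambda T_3/2) N,\, 2\lambda T_3 N]} = \eta(N).
\end{equation*}
Simultaneously, each of the $\ge \varepsilon_2 N$ initially idle-on servers has its standby Exp$(\mu)$ clock fire in $[0,T_3]$ independently with probability $\le 1 - \e^{-\mu T_3} \le 1/8$; a second application of Cramer's theorem yields that with probability $1 - \eta(N)$ at most $(\varepsilon_2/4) N$ such clocks fire. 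On the intersection of these two high-probability events, at every $t \in [0,T_3]$ the number of idle-on servers is at least
\begin{equation*}
\varepsilon_2 N - (\varepsilon_2/4) N - 2\lambda T_3 N \ \ge \ (\varepsilon_2/2) N \ > \ 0,
\end{equation*}
so every arrival in $[0,T_3]$ is dispatched to an idle-on server, producing at least $(\lambda T_3 / 2) N$ idle-on-to-busy transitions.

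Each newly busy server is still busy at time $T_3$ iff its Exp$(1)$ service time exceeds the residual time $\le T_3 \le 1$, which has probability $\ge \e^{-1}$. Conditional on the sequence of idle-on-to-busy transitions (the transition times and the identities of the chosen servers), these survival events are independent, since each service time is a fresh Exp$(1)$ sampled upon the server becoming busy. A conditional Cramer bound then gives that at least $(\lambda T_3 / (4\e)) N$ of these newly busy servers survive to time $T_3$ with probability $1 - \eta(N)$, so setting $\varepsilon_3 := \lambda T_3/(4\e)$ yields the claim, with all estimates uniform in the initial state subject to $u^N(0)\ge \varepsilon_2$. The main subtlety is that the set of newly busy servers is a random, dispatcher-dependent subset rather than a prescribed one; but conditioning on the sequence of transitions and invoking the independence of service times reduces the final step to a standard binomial concentration.
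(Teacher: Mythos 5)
Your proof is correct and follows essentially the same route as the paper: choose $T_3$ small, use Cram\'er-type concentration for the arrival count and for the number of standby-clock expirations, deduce that with high probability every arrival in $[0,T_3]$ is dispatched to an idle-on server, and conclude via the $\mathrm{Exp}(1)$ survival probability that a positive fraction of these freshly-busy servers remain busy at time $T_3$. Your write-up is slightly more explicit than the paper's (a running lower bound on the idle-on population over all of $[0,T_3]$, and the conditional-independence step for the survival events), but the underlying mechanism and the final choice $\varepsilon_3 \propto \lambda T_3$ are the same.
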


\noindent Indeed, fix $T_3$ small enough so that $\e^{-\mu T_3}>3/4$ and $\lambda T_3 < \varepsilon_2/2$. 
At time $0$ there are at least $\varepsilon_2 N$ idle-on servers; with probability at least $\e^{-\mu T_3}>3/4$ they will still be
idle-on at time $T_3$, \emph{unless} they are taken by a new arrival. The (unscaled) number of new arrivals in $[0,T_3]$, namely
$H[0,T_3]$, is Poisson with mean $\lambda T_3 N$, and therefore $H[0,T_3]/N$ concentrates at $\lambda T_3$:
$\Pro{| H[0,T_3]/N - \lambda T_3 | \ge (1/2) \lambda T_3} = \eta(N)$. We conclude that with probability $1-\eta(n)$ every new arrival
in $[0,T_3]$ will go to an idle-on server and turn it into busy; each of those servers, in turn, will remain busy until $T_3$ with 
probability at least $\e^{-T_3}$. 
It remains to choose $\varepsilon_3 \in (0, (1/4) \lambda T_3 \e^{-T_3})$ to obtain the claim.

With these claims, we are now in position to conclude the proof of the lemma. Choose small $T_1>0$ and $\varepsilon'_1>0$
as in Claim~\ref{claim-T1}; and then $\varepsilon_1=\varepsilon'_1/2$. 
For the chosen $\varepsilon_1$,
choose small $T_2>0$ and $\varepsilon'_2>0$
as in Claim~\ref{claim-T2}; and then $\varepsilon_2=\varepsilon'_2/2$. Finally, for the chosen $\varepsilon_2$, choose small $T_3>0$ and $\varepsilon_3>0$
as in Claim~\ref{claim-T3}. Note that $T_1, T_2, T_3$ can small enough so that $T'_3\doteq T_1+T_2+T_3 \le T_0$; 
let us also denote
$T'_2=T_1+T_2$. Choose $\varepsilon_0 = (1/2) \min\{\varepsilon_1,\varepsilon_2,\varepsilon_3\} \e^{-T_0}$.

According to Claim~\ref{claim-T1}, with probability $1-\eta(N)$, at time $T_1$ we have either $q_1^N(T_1) \ge \varepsilon_1$ or
$\delta_1^N(T_1) \ge \varepsilon_1$. Conditioned on a state at $T_1$ safisfying $q_1^N(T_1) \ge \varepsilon_1$, we have 
(\ref{eq:q-1-lower}) by applying Claim~\ref{claim-busy-decay}. Therefore, it remains to prove (\ref{eq:q-1-lower}) conditioned on a state at $T_1$ satisfying $\delta_1^N(T_1) \ge \varepsilon_1$. Under this condition at $T_1$, we obtain from Claim~\ref{claim-T2} that, with probability $1-\eta(N)$, at time $T'_2$ we have either $q_1^N(T'_2) \ge \varepsilon_2$ or
$u^N(T'_2) \ge \varepsilon_2$. Then, conditioned on a state at $T'_2$ satisfying $q_1^N(T'_2) \ge \varepsilon_2$, we have 
(\ref{eq:q-1-lower}) by once again applying Claim~\ref{claim-busy-decay}. It now remains to prove (\ref{eq:q-1-lower}) conditioned on a state at $T'_2$ satisfying $u^N(T'_2) \ge \varepsilon_2$. Under this condition at $T'_2$, we obtain from Claim~\ref{claim-T3} that, with probability $1-\eta(N)$, at time $T'_3$ we have $q_1^N(T'_3) \ge \varepsilon_3$; and conditioned on  $q_1^N(T'_3) \ge \varepsilon_3$
at $T'_3$, we have (\ref{eq:q-1-lower}) by, yet again, Claim~\ref{claim-busy-decay}. The proof is complete.

\subsection{Proof of \texorpdfstring{Lemma 3.3}{Lemma~\ref{lem:steady-concentration}}}
Throughout this subsection we will prove Lemma~\ref{lem:steady-concentration}.
Recall that the stability of the subsystem $\cN\setminus\cK$ is assumed,
and hence there exists a unique stationary distribution for each $N$. 
Recall that we denote by $\qq^N(\infty)$ the random value of $\qq^N(t)$ in the steady-state. 
We will start by stating a few basic facts about the mean-field limits that will facilitate the proof of Lemma~\ref{lem:steady-concentration}.

Recall the definition of MFFSP from the paragraph after Proposition~\ref{prop:mf}, and that $u(t)=1-q_1(t)-\delta_0(t)-\delta_1(t)$. 
Also, denote by $y_1(t) = q_1(t)-q_2(t)$ and by
$(d^+/dt)$ the right derivative.
\begin{claim}\label{fact:mfl}
For any $\varepsilon>0$ there exists $\alpha>0$, such that any MFFSP with $q_1(0)>0$ satisfies the following properties for all $t\geq 0$:
\begin{enumerate}[{\normalfont (i)}]
\item If $y_1(t) \le \lambda-\varepsilon$ and $u(t)>0$, then $(d^+/dt)q_1(t) \ge \alpha$.
\item If $y_1(t) \le \lambda-\varepsilon$, $u(t)=0$ and $\delta_1(t)\ge \varepsilon$, then $(d^+/dt)q_1(t) \ge \alpha$.
\item If $y_1(t) \le \lambda-\varepsilon$, $u(t)=0$, $\delta_1(t)=0$, and $\delta_0(t)>0$, then $(d^+/dt)\delta_1(t)\ge \varepsilon$.
\end{enumerate}
\end{claim}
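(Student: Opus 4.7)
The plan is to deduce each of the three assertions by direct inspection of the MFFSP equations of Proposition~\ref{prop:mf}. Formally differentiating the integral equations at $t$ yields
\[
\frac{d^+}{dt}q_1(t) \;=\; \lambda\, p_0(\qq(t),\dd(t),\lambda) \;-\; y_1(t),
\]
\[
\frac{d^+}{dt}\delta_1(t) \;=\; \lambda\bigl(1-p_0(\qq(t),\dd(t),\lambda)\bigr)\indn{\delta_0(t)>0} \;-\; \nu\delta_1(t),
\]
and the remaining task is to bound the right-hand sides in each of the three cases by substituting the appropriate branch of the piecewise definition of $p_0$.

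For part~(i), $u(t)>0$ forces $p_0=1$ from the definition of $p_0$, so $\frac{d^+}{dt}q_1(t) = \lambda - y_1(t) \ge \varepsilon$; any $\alpha\le\varepsilon$ works. For part~(ii), I would split on which branch of the $\min$ in $p_0$ is active at $t$. If $\delta_1(t)\nu+y_1(t)\ge\lambda$, then $p_0=1$ and again $\frac{d^+}{dt}q_1(t)\ge\varepsilon$; otherwise $p_0=\lambda^{-1}(\delta_1(t)\nu+y_1(t))$, so the arrival contribution exactly cancels $y_1(t)$, leaving $\frac{d^+}{dt}q_1(t)=\delta_1(t)\nu\ge\varepsilon\nu$. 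Taking $\alpha:=\min\{\varepsilon,\varepsilon\nu\}$ covers all sub-cases of both parts.

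For part~(iii), the hypotheses $u(t)=0$ and $\delta_1(t)=0$ reduce $p_0$ to $\min\{\lambda^{-1}y_1(t),1\}$; since $y_1(t)\le\lambda-\varepsilon<\lambda$, this equals $y_1(t)/\lambda$, so $1-p_0\ge\varepsilon/\lambda$. Combined with $\delta_0(t)>0$ and $\delta_1(t)=0$, the formula for $\frac{d^+}{dt}\delta_1(t)$ simplifies to $\lambda(1-p_0)=\lambda-y_1(t)\ge\varepsilon$, which is the bound asserted in the claim.

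The only point requiring some care, rather than a genuine obstacle, is that the right derivatives should equal the formal substitutions of the integrands at $t$. In parts~(i) and~(ii) this is immediate from continuity of $u(\cdot)$ and $\delta_1(\cdot)$, which pin down the branch of $p_0$ on a one-sided neighbourhood of $t$. In part~(iii) the situation is slightly more delicate because in principle $u$ could leave $0$ to the right of $t$; however, from $u=1-q_1-\delta_0-\delta_1$ and the MFFSP equations one derives $u' = y_1-\lambda p_0-\mu u+\nu\delta_1$, which under the hypotheses evaluates to $0$ at $t$. A short check using this same formula rules out $u$ becoming positive in a right neighbourhood (an attempted positive excursion would switch $p_0$ to $1$, giving $u'\le y_1-\lambda<0$ there), so $u\equiv 0$ on a one-sided neighbourhood and the computed value of $\frac{d^+}{dt}\delta_1(t)$ is valid.
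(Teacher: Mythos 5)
Your proof follows essentially the same route as the paper's: both compute the right derivatives formally from the MFFSP integral equations of Proposition~\ref{prop:mf}, substitute the appropriate branch of $p_0$, and take $\alpha = \min\{\varepsilon, \varepsilon\nu\}$. Your case split in part~(ii) is exactly the unpacking of the paper's expression $\min\{\delta_1(t)\nu,\ \lambda - y_1(t)\}$, and parts~(i) and~(iii) are computed identically.

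Where you go beyond the paper is the final paragraph, asking whether the formal substitution at $t$ really gives the right derivative. The paper reads off the integrand without comment; you are right to flag the issue, and your resolution in part~(iii) (ruling out $u$ becoming positive immediately to the right of $t$) is sound: since $\delta_1(t)=0$ and $\delta_1$ is continuous, $\nu\delta_1$ is small near $t$, so whenever $u>0$ in a right neighbourhood one has $p_0=1$ and $u' = y_1 - \lambda - \mu u + \nu\delta_1 < 0$, which precludes a positive excursion. One small imprecision: for part~(ii) you assert that continuity of $u$ ``pins down the branch of $p_0$'' on a right neighbourhood, but this is not quite so when $u(t)=0$, since $u$ may or may not leave $0$ immediately afterward. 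This is harmless in part~(ii) because both branches give the claimed bound: whenever $u(s)>0$ one has $p_0(s)=1$, so $\lambda p_0(s)-y_1(s)=\lambda-y_1(s)\ge \min\{\delta_1(s)\nu,\ \lambda-y_1(s)\}$, and letting $s\to t^+$ yields $\min\{\varepsilon\nu,\varepsilon\}$ regardless of the branch. The ambiguity only genuinely threatens part~(iii), which is precisely where you supplied the extra argument. Lastly, the paper opens by noting $q_1(t)\ge\min\{q_1(0),\lambda\}>0$ for all $t$, so that Proposition~\ref{prop:mf} (whose $p_i$ for $i\ge 1$ divide by $q_1$) stays applicable; you omit this, but since your computations only involve $p_0$ it is not essential here and is worth at most a sentence.
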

\noindent
\begin{proof}
Fix any $\varepsilon>0$. First observe that since $q_1(0)>0$ and due to Proposition~\ref{prop:mf}, $q_1(0)$ is nondecreasing whenever $q_1(t)-q_2(t)\leq \lambda$, we have
$q_1(t)\geq \min\{q_1(0),\lambda\}>0$ for all $t\geq 0$.
Thus, Proposition~\ref{prop:mf} can be applied for all $t\geq 0$, throughout the MFFSP.
Choose $\alpha = \min\{\varepsilon\nu,\ \varepsilon\}$.

For (i), note that if  $y_1(t) \leq \lambda - \varepsilon$ and $u(t)>0$, then 
$$(d^+/dt)q_1(t) = \lambda - (q_1(t) - q_2(t))\geq \varepsilon\geq \alpha.$$

For (ii), note that if $y_1(t) \leq \lambda - \varepsilon$, $u(t)>0$, and $\delta_1(t)\geq \varepsilon$, then due to Proposition~\ref{prop:mf},
\begin{align*}
(d^+/dt)q_1(t) &= \min\big\{(\delta_1(t)\nu + q_1(t) - q_2(t)), \lambda\big\} - (q_1(t) - q_2(t))\\
	&= \min\big\{\delta_1(t)\nu,\quad \lambda - (q_1(t) - q_2(t))\big\}
	\geq \min\big\{\varepsilon\nu,\ \ \varepsilon\big\} = \alpha.
\end{align*}

Finally, for (iii), note that from Proposition~\ref{prop:mf} if $y_1(t) \le \lambda-\varepsilon$, $u(t)=0$, $\delta_1(t)=0$, and $\delta_0(t)>0$, then $(d^+/dt)\delta_1(t) =\lambda-(q_1(t) - q_2(t)) \ge \varepsilon$.
\end{proof}

\noindent
\textbf{Proof of statement (1).}  Note that it is enough to prove the following property of any MFFSP:
\begin{claim}\label{claim:q1case1}
Starting from any state $\qq(0)\in E_\kappa$ with $\kappa\geq 1-\lambda$ and $q_1(0)\in [\kappa, 1)$, along any MFFSP we have $$\lim_{t\to\infty}q_1(t)= 1.$$
\end{claim}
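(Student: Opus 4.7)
The plan is to prove the claim in two steps: first, establish that $q_1(t)$ is monotone non-decreasing along any MFFSP in this regime, so that $L := \lim_{t\to\infty} q_1(t)$ exists in $[\kappa,1]$; then argue by contradiction that $L<1$ is impossible. The monotonicity already ensures $q_1(t) \ge q_1(0) \ge \kappa > 0$ for all $t$, so Proposition~\ref{prop:mf} applies throughout.

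For the monotonicity step, the key observation is that the $k(N)$ infinite queues contribute to every $q_i$, so $q_2(t) \ge \kappa$ and hence $y_1(t) := q_1(t) - q_2(t) \le 1 - \kappa \le \lambda$ at all times. I would plug this into the $q_1$-equation from Proposition~\ref{prop:mf}, $q_1'(t) = \lambda p_0(\qq(t),\dd(t),\lambda) - y_1(t)$, and check the three cases in the definition of $p_0$: when $u>0$ one has $p_0=1$ and $q_1' = \lambda - y_1 \ge 0$; when $u=0$ and $\delta_1\nu + y_1 \le \lambda$ one gets $q_1' = \delta_1\nu \ge 0$; and when $u=0$ with $\delta_1\nu + y_1 > \lambda$, again $p_0=1$ and $q_1' = \lambda - y_1 \ge 0$. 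Monotonicity and boundedness of $q_1$ together produce the limit $L$.

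For the contradiction step, suppose $L<1$ and set $\varepsilon := \tfrac{1}{2}(1-L) > 0$. Since $q_1(t) \le L$ and $q_2(t) \ge \kappa \ge 1-\lambda$, we have $y_1(t) \le L - \kappa \le \lambda - 2\varepsilon$ uniformly in $t$. I would introduce a Lyapunov combination $V(t) := q_1(t) + c\,\delta_1(t)$ for a sufficiently small constant $c>0$, and compute $V'(t) = q_1'(t) + c\,\delta_1'(t)$ using Proposition~\ref{prop:mf}. Working through the three cases of $p_0$ from the previous paragraph, one finds: in the $u>0$ case, $V' \ge 2\varepsilon - c\nu$; in the $u=0$, $\delta_1\nu + y_1 \le \lambda$, $\delta_0>0$ case, $V' = \delta_1\nu(1-2c) + c(\lambda - y_1) \ge 2c\varepsilon$; in the same $p_0$-case but with $\delta_0=0$, the identity $u+\delta_0+\delta_1+q_1=1$ forces $\delta_1 \ge 1 - L = 2\varepsilon$ and hence $V' = (1-c)\nu\delta_1 \ge 2(1-c)\nu\varepsilon$; and in the remaining $u=0$, $\delta_1\nu + y_1 > \lambda$ case one recovers $V' \ge 2\varepsilon - c\nu$. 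Choosing $c = \min\{\varepsilon/\nu,\,1/2\}$ gives a uniform bound $V'(t) \ge \beta(c,\varepsilon,\nu) > 0$, contradicting boundedness of $V \le 1+c$. Therefore $L=1$.

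The hard part is calibrating $c$ to balance all four subcases simultaneously. In the $u>0$ subcase the drag $-c\nu\delta_1$ must be dominated by $\lambda - y_1 \ge 2\varepsilon$, forcing $c \le \varepsilon/\nu$; in the $u=0$, $\delta_0>0$ subcase the positive contribution $c(\lambda - y_1)$ must dominate the term coming from $-2c\nu\delta_1$ in $\delta_1'$, which requires $c \le 1/2$. Note that Claim~\ref{fact:mfl} by itself does \emph{not} cover the intermediate regime $u=0$, $0<\delta_1<\varepsilon$, which is precisely why I would introduce the Lyapunov combination rather than trying to bound $q_1'(t)$ directly case by case.
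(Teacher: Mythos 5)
Your proof is correct, and it takes a genuinely different route from the paper's. Both proofs begin from the same observations (monotonicity of $q_1$ along the MFFSP, so a limit $L$ exists; if $L < 1$ then $y_1(t) = q_1(t) - q_2(t) \le \lambda - 2\varepsilon$ with $\varepsilon = (1-L)/2 > 0$). But the contradiction step diverges: the paper tracks the cumulative setup-initiation function $\xi(t)$ and runs a measure-theoretic bookkeeping argument—bounding the Lebesgue measure of the sets $\{s\le t: u(s)>0\}$, $\{s \le t: \delta_1(s)>\varepsilon/2\}$, and $\{s\le t: \delta_0(s)=0\}$ to conclude simultaneously $\limsup_t \xi(t) < \infty$ and $\liminf_t \xi(t) = \infty$. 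Your Lyapunov combination $V = q_1 + c\,\delta_1$ with $c = \min\{\varepsilon/\nu, 1/2\}$ collapses all of this into a single pointwise drift estimate $V'(t) \ge \beta > 0$, which then contradicts $V \le 1+c$ directly. I verified your four subcases against the $p_0$-formula and the $\delta_1$-equation of Proposition~\ref{prop:mf}: in the $u>0$ and $\delta_1\nu + y_1 > \lambda$ cases $\xi'=0$ and $q_1' = \lambda - y_1 \ge 2\varepsilon$ so $V' \ge 2\varepsilon - c\nu$; in the $u=0$, $\delta_1\nu + y_1 \le \lambda$, $\delta_0>0$ case $q_1' = \nu\delta_1$, $\delta_1' = \lambda - y_1 - 2\nu\delta_1$, giving $V' = \nu\delta_1(1-2c) + c(\lambda - y_1) \ge 2c\varepsilon$; and in the $\delta_0 = u = 0$ subcase the identity $q_1 + \delta_1 = 1$ forces $\delta_1 \ge 2\varepsilon$, so $V' = (1-c)\nu\delta_1 \ge (1-c)\nu \cdot 2\varepsilon$. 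Your choice of $c$ keeps every branch uniformly positive. Your approach is more self-contained and arguably cleaner; the paper's $\xi$-based argument is more explicit about the physical mechanism (unbounded setup initiations), which may be why the authors preferred it. One small caveat worth noting in a write-up: since $p_0$ and the indicator $\ind{\delta_0>0}$ are discontinuous, the case analysis should be read as holding at almost every $t$ (via Lebesgue differentiation of the integral equations in Proposition~\ref{prop:mf}), after which $V(t) - V(0) = \int_0^t V'(s)\,ds \ge \beta t$ gives the contradiction; you implicitly do this, but it deserves a sentence.
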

\noindent
Indeed, Claim~\ref{claim:q1case1} implies that under the assumption of stability, asymptotically the stationary distribution of $q_1^N(t)$ must concentrate at $q_1^\star=1$, as $N\to\infty$.

\begin{proof}[Proof of Claim~\ref{claim:q1case1}]
We will prove by contradiction.
Note that for the case under consideration, $q_i(t)\geq \kappa$ for all $i\geq 1$ and $t\geq 0$.
Therefore, throughout the proof of Claim~\ref{claim:q1case1} we can assume $q_1(0)\geq \kappa>0$, and can apply Proposition~\ref{prop:mf} and Claim~\ref{fact:mfl}.

Note that if $q_1(t)< 1$, we have $q_1(t) - q_2(t)<1-\kappa\leq\lambda$, and hence due to Claim~\ref{fact:mfl}, $q_1(t)$ is non-decreasing.
Thus if Claim~\ref{claim:q1case1} does not hold, then there exists an $\varepsilon>0$, such that $q_1(t)\leq 1 - \varepsilon\nu$ for all $t\geq 0$, and hence
\begin{equation}\label{eq:localq1bdd}
q_1(t)-q_2(t)\leq \lambda - \varepsilon\nu \quad\mbox{for all}\quad t\geq 0.
\end{equation}
The high-level proof idea is that if $q_1(t)$ remains below 1 by a non-vanishing amount for all $t\geq 0$, then the (scaled) rate $q_1(t) - q_2(t)$ of busy servers turning idle-on would not be high enough to match the (scaled) rate $\lambda$ of incoming jobs.
If there are idle-on servers (as in Claim~\ref{fact:mfl}.(i)) or sufficiently many servers in setup mode (as in Claim~\ref{fact:mfl}.(ii)), then we can still assign incoming tasks to idle-on servers, but this drives up the fraction of busy servers $q_1(t)$ and cannot continue indefinitely since $q_1(t)\leq 1 - \varepsilon\nu$ for all $t\geq 0$.
This means that we cannot initiate an unbounded number of setup procedures (see Equation~\eqref{eq:contr1}).
At the same time, as argued above, we cannot continue assigning tasks to idle-on servers either. Thus, throughout the MFFSP, a positive fraction of the jobs are assigned to busy servers, which initiates an unbounded (scaled) number of setup procedures, and hence the contradiction. \\

\noindent
Define the subset $\cX_\kappa\subseteq E$ as
$$\cX_\kappa:= \Big\{(\qq,\dd)\in E_\kappa: q_1 + \delta_0 + \delta_1 = 1,  \delta_1\nu+q_1-q_2\leq \lambda\Big\},$$
and denote by  $\indn{\cX_\kappa}(\qq(s),\dd(s))$  the indicator of $(\qq(s),\dd(s))\in \cX_\kappa.$
Observe that due to Proposition~\ref{prop:mf}, $q_1(t)$ can be written as
\begin{equation}\label{eq:q1eqn}
\begin{split}
q_1(t)&= q_1(0) +\int_{0}^t\delta_1(s)\nu\indn{\cX_\kappa}(\qq(s),\dd(s))\dif s +\int_{0}^t[\lambda - q_1(s) + q_2(s)] \indn{\cX^c_\kappa}(\qq(s),\dd(s))\dif s.
\end{split}
\end{equation}
Thus,
\begin{align*}
q_1(t)\geq q_1(0) +\int_{0}^t[\lambda - q_1(s) + q_2(s)] \indn{\cX^c_\kappa}(\qq(s),\dd(s))\dif s,
\end{align*}
and~\eqref{eq:localq1bdd} yields there exists positive constant $K_1$, which may depend on $\varepsilon$ such that $\forall\ t\geq 0$
\begin{equation}\label{eq:K1-choice}
\int_0^t \indn{\cX_{\kappa}^c}(\qq(s),\dd(s))\dif s<K_1
\implies \int_0^t\ind{u(s)>0}\dif s<K_1.
\end{equation}
Again from~\eqref{eq:q1eqn} we obtain
\begin{align*}
q_1(t)&\geq  q_1(0) +\int_{0}^t\delta_1(s)\nu\indn{\cX_\kappa}(\qq(s),\dd(s))\dif s\\
&\geq q_1(0)+\nu\int_{0}^t\delta_1(s)\dif s-(\nu+1)\int_{0}^t\indn{\cX_\kappa^c}(\qq(s),\dd(s))\dif s.
\end{align*}
and thus, by \eqref{eq:localq1bdd} and \eqref{eq:K1-choice}, there exist positive constants $K_2$, $K_2'$  which may depend on $\varepsilon$ such that $\forall\ t\geq 0$
\begin{equation}\label{eq:K2-choice}
\begin{split}
 \int_0^t\delta_1(s)\dif s<K_1 &\implies \int_0^t\ind{\delta_1(s)>\frac{\varepsilon}{2}}\dif s<K_2,
  \implies \int_0^t\ind{\delta_1(s)>\frac{\varepsilon\nu}{2}}\dif s<K_2'.
 \end{split}
\end{equation}
Consequently, due to Proposition~\ref{prop:mf}, since $\delta_1(t) = \delta_1(0) + \xi(t) - \nu\int_0^t\delta_1(s)\dif s,$
it must be the case that 
\begin{equation}\label{eq:contr1}
\limsup_{t\to\infty}\xi(t)<\infty.
\end{equation}
Furthermore, since $q_1(t)\leq 1-\varepsilon\nu$ for all $t\geq 0$,
\[\ind{\delta_0(t)=0} \leq \ind{u(t)>0}+\ind{\delta_1(t)\geq \frac{\varepsilon\nu}{2}}.\]
Thus, \eqref{eq:K1-choice} and \eqref{eq:K2-choice} yield $\forall\ t\geq 0$,
\begin{equation}\label{eq:delta0bdd}
\int_0^t\ind{\delta_0(s)=0}\dif s\leq K_1+K_2'.
\end{equation}
Now from Proposition~\ref{prop:mf} observe that
\begin{align*}
\xi(t)&= \int_0^t\lambda(1-p_0(\qq(s),\dd(s),\lambda))\ind{\delta_0(s)>0}\dif s\\
&\geq \int_{0}^t\lambda(1-p_0(\qq(s),\dd(s),\lambda))\ind{\delta_0(s)>0, u(s) = 0,\delta_1(s)\leq \varepsilon/2}\dif s,
\end{align*}
and on the set $\{s:\delta_0(s)>0, u(s) = 0,\delta_1(s)\leq \varepsilon/2\}$ we have  $p_0(\qq(s),\dd(s),\lambda)\leq \lambda^{-1}(\varepsilon\nu/2+q_1(s)-\kappa)$. Therefore,
\begin{equation}\label{eq:xilowerbdd}
\begin{split}
\xi(t)
&\geq \int_{0}^t\Big(\lambda-\frac{\varepsilon\nu}{2}-q_1(s)+\kappa\Big)\ind{\delta_0(s)>0, u(s) = 0,\delta_1(s)\leq \varepsilon/2}\dif s\\
&\geq \int_0^t\Big(\lambda-\frac{\varepsilon\nu}{2}-q_1(s)+\kappa\Big)\dif s - \int_0^t\ind{\delta_0(s)=0}\dif s 
- \int_0^t\ind{u(s)>0}\dif s - \int_0^t\ind{\delta_1(s)>\varepsilon/2}\dif s,
\end{split}
\end{equation}
where the second inequality is due to the fact that $\lambda-\varepsilon\nu/2-q_1(s)\leq \lambda <1$.
Therefore, since $\lambda+\kappa\geq 1$, we have $\lambda-\varepsilon\nu/2-q_1(s)+\kappa\geq \varepsilon\nu/2>0$, and Equations~\eqref{eq:K1-choice}, \eqref{eq:K2-choice}, \eqref{eq:delta0bdd}, and \eqref{eq:xilowerbdd} implies $\liminf_{t\to\infty}\xi(t) = \infty$, which is a contradiction with~\eqref{eq:contr1}.
This completes the proof of Claim~\ref{claim:q1case1}.
\end{proof}

\noindent
\textbf{Proof of statement (2).} First we will establish convergence of $q_1^N(\infty)$ as $N\to\infty$, followed by convergence of $q_2^N(\infty)$, $\delta_0^N(\infty)$, and $\delta_1^N(\infty)$.\\

\noindent
\textbf{Convergence of $q_1^N(\infty)$.} 
First we will show that 
for all $\varepsilon_2>0$, 
\begin{equation}\label{eq:q1limsup}
\limsup_{N\to\infty} P(q_1^N(\infty) < \kappa+\lambda-\varepsilon_2) = 0.
\end{equation}
Due to Lemma~\ref{lem:expo-bound}, note that any limit of stationary distributions is such that with probability~1, $q_1 \ge \varepsilon_1$ for some fixed $\varepsilon_1>0$.
Therefore, throughout the proof of Part (2) of Lemma~\ref{lem:steady-concentration}, it is enough to consider MFFSP so that $q_1(0)\geq \varepsilon_1$, and Proposition~\ref{prop:mf} and Claim~\ref{fact:mfl} can be used.
Thus, for~\eqref{eq:q1limsup}, it is enough to show that any MFFSP has the following property:
\begin{claim}\label{claim:q1lower}
Starting from any state $\qq(0)\in E_\kappa$ with $q_1(0)\in [\varepsilon_1, \kappa+\lambda)$, along any MFFSP we have 
\begin{equation}\label{eq:q1part2}
\liminf_{t\to\infty}q_1(t)\geq \kappa+\lambda.
\end{equation}
\end{claim}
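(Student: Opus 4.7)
The plan is to mirror the proof of Claim~\ref{claim:q1case1}, with $\kappa+\lambda$ taking the role that was played there by $1$.

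The first step is a monotonicity observation. Since $q_2(t)\ge\kappa$ along any MFFSP in $E_\kappa$, the inequality $q_1(t)\le\kappa+\lambda$ forces $y_1(t)=q_1(t)-q_2(t)\le\lambda$; the explicit formulas for $p_0(\qq,\dd,\lambda)$ in Proposition~\ref{prop:mf} then give $(d^+/dt)q_1(t)=\lambda-y_1(t)\ge 0$ when $u(t)>0$, and $(d^+/dt)q_1(t)=\min\{\lambda-y_1(t),\delta_1(t)\nu\}\ge 0$ when $u(t)=0$. Since $q_1(0)\ge\varepsilon_1>0$, this derivative identity is in force throughout the MFFSP. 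Thus $q_1(\cdot)$ is non-decreasing on the sub-level set $\{q_1\le\kappa+\lambda\}$; in particular, the boundary $q_1=\kappa+\lambda$ cannot be crossed from above.

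Consequently either (a) $q_1(t)\ge\kappa+\lambda$ for all sufficiently large $t$, whence~\eqref{eq:q1part2} is immediate, or (b) $q_1(t)<\kappa+\lambda$ for every $t\ge 0$, in which case $q_1$ is monotone non-decreasing and converges to some limit $L\le\kappa+\lambda$. I argue by contradiction in case (b): assume $L<\kappa+\lambda$ and set $\varepsilon:=(\kappa+\lambda-L)/\nu>0$, so that $q_1(t)\le\kappa+\lambda-\varepsilon\nu$ and hence $y_1(t)\le\lambda-\varepsilon\nu$ for all $t\ge 0$.

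With this bound in hand, I would transplant the chain of integral estimates from the proof of Claim~\ref{claim:q1case1} essentially verbatim, using the set $\cX_\kappa$ exactly as defined there. Equations~\eqref{eq:q1eqn}--\eqref{eq:K2-choice} go through unchanged, producing $\limsup_{t\to\infty}\xi(t)<\infty$ (as in~\eqref{eq:contr1}) together with the uniform bounds on $\int_0^t\indn{\cX_\kappa^c}(\qq(s),\dd(s))\dif s$, $\int_0^t\ind{u(s)>0}\dif s$, $\int_0^t\ind{\delta_1(s)>\varepsilon\nu/2}\dif s$ and $\int_0^t\ind{\delta_0(s)=0}\dif s$. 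The one place where the value $1$ (together with the hypothesis $\kappa\ge 1-\lambda$) was used in Case~1 is in the last line of~\eqref{eq:xilowerbdd}, where one needs $\lambda-\varepsilon\nu/2-q_1(s)+\kappa>0$; in the present regime the upper bound $q_1(s)\le\kappa+\lambda-\varepsilon\nu$ produces exactly this inequality with slack $\varepsilon\nu/2$, so~\eqref{eq:xilowerbdd} again forces $\liminf_{t\to\infty}\xi(t)=\infty$, in contradiction to the upper bound on $\xi$ and thereby establishing~\eqref{eq:q1part2}.

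The main obstacle is the first step: verifying that $q_1$ cannot dip back below $\kappa+\lambda$ once it has reached that level. Without this structural fact, the clean case~(a)/case~(b) dichotomy breaks down and one would be forced to cope with potential oscillations of $q_1$ around $\kappa+\lambda$ — typically by estimating the total Lebesgue measure of the low-$q_1$ set and running the Case~1 computation only on that set — which is considerably more delicate. Once the monotonicity is secured, the rest is a direct transcription of the Case~1 argument.
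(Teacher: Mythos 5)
Your proposal is correct and follows exactly the route the paper intends (the paper itself says ``Similar arguments as in the proof of Claim~\ref{claim:q1case1} can be used,'' and omits the details). Your transplanting of the integral estimates works: with $q_1(s)\le\kappa+\lambda-\varepsilon\nu$ and $q_2(s)\ge\kappa$, the bound $p_0\le\lambda^{-1}(\varepsilon\nu/2+q_1(s)-\kappa)$ on the relevant set, and hence the integrand lower bound $\lambda-\varepsilon\nu/2-q_1(s)+\kappa\ge\varepsilon\nu/2>0$ in the analogue of~\eqref{eq:xilowerbdd}, both go through verbatim, without needing $\kappa\ge1-\lambda$. The extra care you take with the monotone barrier at $q_1=\kappa+\lambda$ (so that you really are in the clean dichotomy ``$q_1$ eventually $\ge\kappa+\lambda$'' vs.\ ``$q_1<\kappa+\lambda$ for all $t$ and converges monotonically to $L$'') is exactly the detail that the paper's phrase glosses over, and your justification of it — $q_2\ge\kappa$ forces $y_1\le\lambda$ whenever $q_1\le\kappa+\lambda$, and then the explicit drift formulas give $(d^+/dt)q_1\ge0$ in both the $u>0$ and $u=0$ branches — is correct. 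In short, this is the paper's proof with the omitted step supplied, not a different argument.
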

\noindent
Similar arguments as in the proof of Claim~\ref{claim:q1case1} can be used to prove Claim~\ref{claim:q1lower}, for which we omit the details.
Claim~\ref{claim:q1lower} then implies~\eqref{eq:q1limsup}.

Further, observe that since we have assumed that the system is stable, we have 
\begin{equation}\label{eq:exptbdd}
\lim_{N\to\infty}\expt(q_1^N(\infty)) \leq \kappa+\lambda.
\end{equation}
Fix any $\varepsilon_2'>0$. 
Now for all fixed $M>0$, 
\begin{align*}
\expt(q_1^N(\infty))-(\kappa+\lambda) &\geq \varepsilon_2'\prob(q_1^N(\infty) > \kappa+\lambda+\varepsilon_2')-\frac{\varepsilon_2'}{M}\prob\big(\kappa+\lambda-\frac{\varepsilon_2'}{M}\leq q_1^N(\infty) \leq \kappa+\lambda+\varepsilon_2'\big) \\
&\hspace{4.8cm}- \prob\big(q_1^N(\infty) < \kappa+\lambda-\frac{\varepsilon_2'}{M} \big),\\
& \geq \varepsilon_2'\prob(q_1^N(\infty) > \kappa+\lambda+\varepsilon_2')-\frac{\varepsilon_2'}{M}- \prob\big(q_1^N(\infty) < \kappa+\lambda-\frac{\varepsilon_2'}{M} \big),
\end{align*}
and thus, from \eqref{eq:q1limsup} and \eqref{eq:exptbdd} above,
\[\limsup_{N\to\infty} \prob(q_1^N(\infty) > \kappa+\lambda+\varepsilon_2') \leq \frac{1}{M}\quad\mbox{for all }M>0\]
which in conjunction with~\eqref{eq:q1limsup} completes the proof of convergence of $q_1^N(\infty)$.\\

\noindent
\textbf{Convergence of $q_2^N(\infty)$.} 
Note that given the above convergence of $q_1^N(\infty)$ to $\kappa+\lambda$ as $N\to\infty$, the following property of the mean-field limit is sufficient to prove that the sequence of stationary distributions $q_2^N(\infty)$ concentrate at $q_2^\star = \kappa$ as $N\to\infty$:
\begin{claim}\label{claim:q2conv}
For any $\varepsilon_3>0$, there exists a fixed $T_0$ and $\varepsilon_4>0$, such that starting from any state $\qq(0)\in E_\kappa$ with $q_1(0)=\lambda+\kappa$ and $q_2(0)> \kappa+\varepsilon_3$, along any MFFSP we have $q_1(T_0) \geq  \kappa+\lambda + \varepsilon_4$.
\end{claim}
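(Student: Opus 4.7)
The plan is a proof by contradiction. Assume that $q_1(t) < \lambda + \kappa + \varepsilon_4$ for all $t \in [0, T_0]$, with $T_0$ and $\varepsilon_4$ to be chosen in terms of $\varepsilon_3, \lambda, \kappa, \nu$. First I would propagate the initial surplus of $q_2$ using Proposition~\ref{prop:mf}: since $q_3 \geq \kappa$, we have $(d/dt)(q_2 - \kappa) \geq -(q_2 - q_3) \geq -(q_2 - \kappa)$, hence $q_2(t) - \kappa \geq \varepsilon_3 e^{-t}$ on $[0, T_0]$. Combined with the contradiction hypothesis, this gives $y_1(t) = q_1(t) - q_2(t) \leq \lambda - \varepsilon$ on $[0, T_0]$, where $\varepsilon := \varepsilon_3 e^{-T_0} - \varepsilon_4$ (kept positive by our choice). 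Observe also that under these bounds, Proposition~\ref{prop:mf} gives $(d/dt)q_1 \geq 0$ throughout, so $q_1$ is non-decreasing on $[\lambda+\kappa, \lambda+\kappa+\varepsilon_4]$.

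The heart of the argument uses a Lyapunov function $V(t) := q_1(t) + C\delta_1(t)$ for $C = \min\{1/2, \varepsilon/(2\nu)\}$, together with a case analysis derived from Proposition~\ref{prop:mf} and Claim~\ref{fact:mfl}. When $p_0(t)=1$ (either $u>0$, or $u=0$ with $\delta_1\nu \geq \lambda - y_1$), we have $(d/dt)q_1 = \lambda - y_1 \geq \varepsilon$ and $(d/dt)\delta_1 = -\nu\delta_1$, so $(d/dt)V \geq \varepsilon - C\nu \geq \varepsilon/2$. When $p_0(t)<1$ and $\delta_0(t)>0$, direct substitution yields $(d/dt)q_1 = \nu\delta_1$ and $(d/dt)\delta_1 = (\lambda - y_1) - 2\nu\delta_1 \geq \varepsilon - 2\nu\delta_1$, so $(d/dt)V = \nu\delta_1(1-2C) + C(\lambda-y_1) \geq C\varepsilon$ provided $C \leq 1/2$. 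When $p_0(t)<1$ and $\delta_0(t)=0$, the identity $q_1 + \delta_1 = 1$ forces $\delta_1 \geq 1 - \lambda - \kappa - \varepsilon_4 =: \delta_{\min}$, giving $(d/dt)V = \nu\delta_1(1-C) \geq \nu\delta_{\min}(1-C)$. Thus $(d/dt)V \geq \alpha$ uniformly on $[0,T_0]$ for some $\alpha>0$ depending on $\varepsilon, \nu, \lambda, \kappa$, and integration gives $q_1(T_0) - q_1(0) \geq \alpha T_0 - C$.

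The contradiction then follows by choosing $T_0$ and $\varepsilon_4$ such that $\alpha T_0 - C > \varepsilon_4$ while $\varepsilon > 0$. The main technical obstacle lies in the middle regime ($p_0 < 1$, $\delta_0 > 0$), where the Lyapunov derivative is only $C\varepsilon = O(\varepsilon^2/\nu)$; naively, balancing $T_0 \gtrsim C/\alpha = O(1/\varepsilon)$ against the exponential decay $\varepsilon = \varepsilon_3 e^{-T_0} - \varepsilon_4$ fails for small $\varepsilon_3$. Resolving this requires a refined lower bound on $q_2$ that exploits the replenishment effect $(d/dt)q_2 \geq \lambda p_1 - (q_2-q_3)$ in the regime $p_0 < 1$: here $\lambda p_1 = (\lambda - \delta_1\nu - y_1)\,y_1/q_1$ is bounded below by a constant multiple of $\varepsilon_3$, yielding $q_2(t) - \kappa \geq \Omega(\varepsilon_3)$ \emph{uniformly in $t$} rather than only $\varepsilon_3 e^{-t}$. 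With this improved bound, $\varepsilon$ becomes a positive constant independent of $T_0$, and $T_0$ of order $1/\nu$ together with $\varepsilon_4$ of order $\varepsilon_3/\nu$ suffices to close the contradiction.
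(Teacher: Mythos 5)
Your strategy (proof by contradiction, exponential propagation of the $q_2$ surplus, and a Lyapunov function $V=q_1+C\delta_1$) is genuinely different from the paper's, which works directly on a time window $[0,T_0]$ with $T_0=\Theta(\varepsilon_3)$ small. In the paper, boundedness of the rates of change of $q_1$ and $q_2$ alone gives $y_1(t)\le\lambda-\varepsilon_3/2$ on $[0,T_0]$ with no contradiction hypothesis, which decouples $\varepsilon$ from $T_0$ entirely; the drift of $q_1$ on that window is then bounded below through a case split on whether $u$ and $\delta_1$ ever become simultaneously small. Your exponential bound $q_2(t)-\kappa\ge\varepsilon_3 e^{-t}$ is correct and elegant, and the per-case Lyapunov derivative computations are correct. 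But there are two genuine gaps.

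First, the balancing at the end does not close, and this is sharper than you acknowledge. You need $q_1(T_0)-q_1(0)\ge\varepsilon_4$, but the Lyapunov inequality only gives $q_1(T_0)-q_1(0)\ge\alpha T_0-C\big(\delta_1(T_0)-\delta_1(0)\big)\ge\alpha T_0-C$. In the worst case $\alpha=C\varepsilon$, so the condition becomes $\varepsilon T_0>1+\varepsilon_4/C$. But under your construction $\varepsilon\le\varepsilon_3 e^{-T_0}$, hence $\varepsilon T_0\le\varepsilon_3\, T_0 e^{-T_0}\le\varepsilon_3/e<1$, so $\alpha T_0-C<0$ for \emph{every} choice of $T_0,\varepsilon_4$: the conclusion you draw is vacuous, not merely weak for small $\varepsilon_3$. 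The culprit is the additive slack $-C\delta_1(T_0)$; since $C$ must be taken of order $\varepsilon/\nu$ to control Case~A, this slack is the same order as the best-case gain.

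Second, the proposed fix does not repair this. The claim that $\lambda p_1=(\lambda-\delta_1\nu-y_1)\,y_1/q_1$ is bounded below by a multiple of $\varepsilon_3$ is false: the product vanishes whenever $y_1=0$ and whenever $\delta_1\nu+y_1\ge\lambda$ (i.e.\ whenever $p_0=1$), and is small whenever $y_1$ is either close to $0$ or close to $\lambda-\delta_1\nu$. Moreover, even granting a uniform bound $\varepsilon=\Omega(\varepsilon_3)$, the obstruction above persists: you would need $\varepsilon T_0\gtrsim 1$, while $\varepsilon<1$ forces $T_0\gtrsim 1/\varepsilon_3$, for which $\varepsilon_3 e^{-T_0}$ collapses. (Note also that a uniform lower bound $q_2-\kappa\ge\varepsilon$ is already implied by $y_1\le\lambda-\varepsilon$ and $q_1\ge\lambda+\kappa$; the ``refinement'' adds nothing.) What is actually needed is not a better bound on $q_2$ but a Lyapunov argument that does not pay the $-C$ penalty uniformly. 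One route: keep your contradiction setup but take $T_0=\Theta(\varepsilon_3)$ small so that $\varepsilon\approx\varepsilon_3/2$ is a clean constant; drop the auxiliary $V$; and instead argue as the paper does — either $u>0$ or $\delta_1\ge c\varepsilon_3$ throughout $[0,T_0/2]$, in which case $q_1'\gtrsim\min\{\nu,1\}\varepsilon_3$ directly, or at some $t'$ both fail, in which case $\delta_0(t')>0$, $\delta_1$ rises to $\Omega(\varepsilon_3 T_0)$ and stays there on an interval of length $\Theta(T_0)$, giving a $q_1$ increase of $\Omega(\nu\varepsilon_3^3)$. That closes the argument with $\varepsilon_4=\Omega(\nu\varepsilon_3^3)$.
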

\noindent
Indeed, if the sequence of the stationary distributions were such that 
$$\limsup_{N\to\infty} \prob\big(q_2^N(\infty)>\kappa+\varepsilon_3\big)>0,$$ then Claim~\ref{claim:q2conv} would imply that $\limsup_{N\to\infty} \prob\big(q_1^N(\infty)>\kappa+\lambda+\varepsilon_4/2\big)>0$, which contradicts the convergence of $q_1^N(\infty)$.

\begin{proof}[Proof of Claim~\ref{claim:q2conv}]
We will prove by contradiction.
Note that since $q_1(0)=\lambda+\kappa$ and $q_2(0)>\kappa+\varepsilon_3$, and the rates of change are bounded, in a sufficiently small neighborhood $[0,T_0]$ (depending only on $\varepsilon_3$), we have for all $t\in [0,T_0]$, (i) $q_1(t)\leq \lambda+\kappa+\varepsilon_3/2$, (ii)~$q_2(t)\geq \kappa+\varepsilon_3/2$, and
\[\mathrm{(iii)}~y_1(t)=q_1(t) - q_2(t)\leq \lambda-\frac{\varepsilon_3}{2}.\]
Since due to Claim~\ref{fact:mfl}, $q_1(t)$ is nondecreasing in $[0,T_0]$,
it is enough to produce a subinterval of $[0,T_0]$, where the right-derivative of $q_1(t)$ is bounded away from 0.
Now we will consider two cases:\\

\noindent
{\bf Case 1:} There exists $t'\in [0,T_0/2]$, such that $u(t')=0$ and $\delta_1(t')\leq\varepsilon_3/2$.
In this case, $\delta_0(t')>0$, and in a sufficiently small time interval almost all points (with respect to Lebesgue measure) are regular for $\delta_1(t)$.
Also, due to Proposition~\ref{prop:mf}, since for $t\geq t'$,
\[\delta_1(t)=\delta_1(t')+\xi(t)-\xi(t')-\nu\int_{t'}^t\delta_1(s)\dif s,\]
with $(d^+/dt)\xi(t)=\lambda-y_1(t)\geq \varepsilon_3/2$ at $t=t'$, we have for sufficiently small $t_1<T_0/4$ (where choice of $t_1$ does not depend on~$t'$), $\delta_1(t'+t_1)\geq t_1\varepsilon_3/4$.
Also, since the rate of decrease of $\delta_1(t)$ is bounded, there exists $t_2<T_0/4$ (where choice of $t_2$ does not depend on~$t'$ as well), such that, 
\[\delta_1(t)\geq \frac{t_1\varepsilon_3}{8}\quad\mbox{for all}\quad t\in [t'+t_1, t'+t_1+t_2]\subseteq [0,T_0].\] 
Thus, due to Claim~\ref{fact:mfl} there exists $\alpha>0$, such that during the time interval $[t'+t_1, t'+t_1+t_2]$, $(d^+/dt)q_1(t)\geq \min\{\alpha, \nu t_1\varepsilon_3/8\}$. 
Consequently,
\begin{equation}\label{eq:localq1case1}
q_1(T_0)\geq q_1(t'+t_1+t_2)\geq \lambda+\kappa+\min\Big\{\alpha, \frac{\nu t_1\varepsilon_3}{8}\Big\} t_2.
\end{equation}
It is important to note that the choices of $t_1$ and $t_2$ depend only on $\varepsilon_3$ and not on $t'$.

\noindent
{\bf Case 2:} For all $t\in [0,T_0/2]$, either $u(t)>0$ or $\delta_1(t)>\varepsilon_3/2$.
In this case, due to Claim~\ref{fact:mfl} (i) and (ii), there exists $\alpha>0$, such that $(d^+/dt)q_1(t)>\alpha$ for all $t\in [0,T_0/2]$.
Also, since $q_2(t)$ is non-decreasing in $[0,T_0]$.
we obtain
\begin{equation}\label{eq:localq1case2}
q_1(T_0) \geq q_1\Big(\frac{T_0}{2}\Big)\geq \lambda+\kappa+\frac{\alpha T_0}{2}.
\end{equation}

\noindent
Combining the two cases above, and choosing 
\[\varepsilon_4 = \min\Big\{\min\Big\{\alpha, \frac{\nu t_1\varepsilon_3}{8}\Big\} t_2,  \frac{\alpha T_0}{2}\Big\}>0\]
completes the proof of Claim~\ref{claim:q2conv}.
\end{proof}

\noindent
\textbf{Convergence of $\delta_1^N(\infty)$ and $\delta_0^N(\infty)$.}
Given the convergence of $q_1^N(\infty)$ and $q_2^N(\infty)$, the convergence of $\delta_1^N(\infty)$ and $\delta_0^N(\infty)$ can be seen immediately by observing that the mean-field limit has the following property:
\begin{claim}\label{claim:delta0delta1}
Starting from any state $\qq(0)\in E_\kappa$ with $q_1(0)=\lambda+\kappa$ and $q_2(0)=\kappa$, along any MFFSP $\delta_1(t)\to 0$ and $\delta_0(t)\to 1-\lambda-\kappa$ as $t\to\infty$.
\end{claim}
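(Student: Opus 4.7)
The plan is to show that for any MFFSP $(\qq(\cdot),\dd(\cdot))$ starting from the stated initial condition, $q_1(t)\equiv \lambda+\kappa$ and $q_i(t)\equiv \kappa$ for all $i\ge 2$ and $t\ge 0$, and then to solve the resulting simple ODEs for $\delta_0$ and $\delta_1$ in closed form.

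First I would observe that the monotonicity $q_i(0)\ge q_{i+1}(0)$ combined with the lower bound $q_{i+1}(0)\ge \kappa$ from the definition of $E_\kappa$ and the hypothesis $q_2(0)=\kappa$ forces $q_i(0)=\kappa$ for every $i\ge 2$. Substituting the candidate trajectory $q_1(t)\equiv \lambda+\kappa$, $q_i(t)\equiv \kappa$ for $i\ge 2$, into the MFFSP equations: since $q_1-q_2\equiv \lambda$, the formula for $p_0(\qq,\dd,\lambda)$ in Proposition~\ref{prop:mf} evaluates to $1$ both when $u(t)>0$ (by definition) and when $u(t)=0$ (because $\delta_1\nu+q_1-q_2\ge \lambda$ in all cases). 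Hence $p_0\equiv 1$ and $p_i\equiv 0$ for $i\ge 1$, making each $q_i'(t)=0$, and the candidate satisfies all the integral equations. A standard Gronwall-type uniqueness argument for the MFFSP---applied to the Lipschitz right-hand sides, noting that $p_0$ is Lipschitz in $(\qq,\dd)$ due to its $\min$ structure---then implies that any MFFSP from this initial condition must coincide with the candidate.

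Given $p_0\equiv 1$, the cumulative setup-initiation function $\xi(t)=\int_0^t \lambda(1-p_0)\ind{\delta_0>0}\dif s$ vanishes identically. The ODE for $\delta_1$ therefore reduces to $\delta_1'(t)=-\nu\delta_1(t)$, giving the closed form $\delta_1(t)=\delta_1(0)\e^{-\nu t}\to 0$ as $t\to\infty$. For $\delta_0$, the identity $\delta_0(t)=\delta_0(0)+\mu\int_0^t u(s)\dif s$ shows that $\delta_0$ is nondecreasing and bounded above by $1$, so $\int_0^\infty u(s)\dif s<\infty$. Combined with Lipschitz continuity of $u(\cdot)=1-q_1(\cdot)-\delta_0(\cdot)-\delta_1(\cdot)$, this forces $u(t)\to 0$. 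Using $u=(1-\lambda-\kappa)-\delta_0-\delta_1$ together with $\delta_1\to 0$ and $u\to 0$, I conclude $\delta_0(t)\to 1-\lambda-\kappa$.

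The main obstacle is the uniqueness step---ruling out that $q_1-q_2$ transiently dips below $\lambda$ and causes $p_0$ to deviate from $1$, which could trigger a nonzero $\xi$ and non-trivial setup dynamics. I would address this by a Gronwall estimate on a suitably weighted discrepancy between an arbitrary MFFSP and the candidate trajectory, exploiting local Lipschitz continuity of the mean-field vector field near the candidate; alternatively, a closure argument on the maximal interval on which $q_1(s)=\lambda+\kappa$ and $q_i(s)=\kappa$ for $i\ge 2$ hold, combined with the right-derivative computation above, would give the same conclusion.
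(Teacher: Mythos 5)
Your high-level strategy matches what the paper (which omits the proof, calling it ``immediate'') clearly intends: argue the $q$-trajectory stays at the fixed point, so $p_0\equiv 1$, hence $\xi\equiv 0$, hence $\delta_1$ decays exponentially, $\delta_0$ is nondecreasing and bounded, so $u\to 0$, and the limits follow. The elementary computations (all $q_i(0)=\kappa$ for $i\ge 2$; $p_0=1$ at the candidate because $\delta_1\nu + q_1 - q_2 \ge \lambda$; the closed-form solution of the $\delta$-ODEs) are correct.

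However, your uniqueness step has a genuine gap. The assertion that $p_0$ is ``Lipschitz in $(\qq,\dd)$ due to its $\min$ structure'' is false: $p_0$ is discontinuous on the set $\{u=0,\ \delta_1\nu + q_1 - q_2 < \lambda\}$, where its value drops strictly below the value $1$ taken at any nearby state with $u>0$. The candidate trajectory sits exactly on the boundary $q_1-q_2=\lambda$ (and possibly $u=0$), so in every neighborhood there are states where $p_0<1$; the mean-field vector field is therefore \emph{not} locally Lipschitz there, and Gronwall does not apply off the shelf. Your fallback closure argument is likewise incomplete: establishing that the right-derivatives of $q_1$ and $q_i$ vanish at the endpoint of the maximal interval is not enough to rule out the trajectory leaving the candidate set, precisely because the vector field is not Lipschitz at that boundary point. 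A valid proof of the ``$q$ stays constant'' step needs to exploit the hard constraints $q_i\ge q_{i+1}\ge \kappa$ of $E_\kappa$ more carefully (e.g., showing $q_1\le\lambda+\kappa$ and $q_1\ge\lambda+\kappa$ separately using sign conditions on $q_1'$, together with $q_2\ge\kappa$ and $q_2(0)=\kappa$ forcing $q_2\equiv\kappa$), or else bypass uniqueness of the MFFSP altogether by observing that the accumulation point of stationary distributions is \emph{invariant} and supported on $\{q_1=\lambda+\kappa,\ q_2=\kappa\}$, from which the computation of the $\delta$-dynamics you carried out immediately identifies the invariant measure.
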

\noindent
The proof of Claim~\ref{claim:delta0delta1} is immediate from the description of the mean-field limit as in Proposition~\ref{prop:mf}, and hence is omitted.\\

\noindent
The proof of the statement in~\eqref{eq:foundbusy} follows by using the convergence of steady states and the PASTA property.
This completes the proof of Lemma~\ref{lem:steady-concentration}.

\section{Conclusion}\label{sec:con}
In this paper we studied the stability of systems under the TABS scheme and established large-scale asymptotics of the sequence of steady states.
Understanding stability of stochastic systems is of fundamental importance.
Systems under the TABS scheme, as it turned out, may be unstable for some $N$ even under a sub-critical load assumption.
As in many other cases, the lack of monotonicity makes the stability analysis much more challenging  from a methodological standpoint.
We developed a novel induction-based method and establish that the TABS scheme is stable for all large enough $N$.
The proof technique is of independent interest and potentially has a much broader applicability.
The key model-dependent part of our method is what can be called a weak monotonicity property, which ensures that for large enough $N$, with high probability, no matter where the system starts, in some fixed amount of time, there will be a certain fraction of busy servers.
Both traditional fluid limit (fixed $N$, initial state goes to infinity) and mean-field limit (for a sequence of processes with the number of queues $N \to \infty$) were used in an intricate manner to establish the results.

\section*{Acknowledgement}
DM was supported by The Netherlands Organization for Scientific Research (NWO) through Gravitation Networks grant 024.002.003, and TOP-GO grant 613.001.012.


\begin{thebibliography}{}

\bibitem[\protect\astroncite{Aghajani and Ramanan}{2017}]{AR17}
Aghajani, R. and Ramanan, K. (2017).
\newblock {The hydrodynamic limit of a randomized load balancing network}.
\newblock {\em arXiv:1707.02005}.

\bibitem[\protect\astroncite{Andrew et~al.}{2010}]{ALW10}
Andrew, L. L.~H., Lin, M., and Wierman, A. (2010).
\newblock {Optimality, fairness, and robustness in speed scaling designs}.
\newblock {\em ACM SIGMETRICS Perform. Eval. Rev.}, 38(1):37--48.

\bibitem[\protect\astroncite{Bramson et~al.}{2012}]{BLP12}
Bramson, M., Lu, Y., and Prabhakar, B. (2012).
\newblock {Asymptotic independence of queues under randomized load balancing}.
\newblock {\em Queueing Syst.}, 71(3):247--292.

\bibitem[\protect\astroncite{Bramson et~al.}{2013}]{BLP13}
Bramson, M., Lu, Y., and Prabhakar, B. (2013).
\newblock {Decay of tails at equilibrium for FIFO join the shortest queue
  networks}.
\newblock {\em Ann. Appl. Probab.}, 23(5).

\bibitem[\protect\astroncite{Dai}{1995}]{D99}
Dai, J.~G. (1995).
\newblock {On positive Harris recurrence of multiclass queueing networks: A
  unified approach via fluid limit models}.
\newblock {\em Ann. Appl. Probab.}, 5(1):49--77.

\bibitem[\protect\astroncite{Eschenfeldt and Gamarnik}{2016}]{EG16}
Eschenfeldt, P. and Gamarnik, D. (2016).
\newblock {Supermarket queueing system in the heavy traffic regime. Short queue
  dynamics}.
\newblock {\em arXiv: 1610.03522}.

\bibitem[\protect\astroncite{Foss and Stolyar}{2017}]{FS17}
Foss, S. and Stolyar, A. (2017).
\newblock {Large-scale Join-Idle-Queue system with general service times}.
\newblock {\em J. Appl. Probab.}, 54(4):995--1007.

\bibitem[\protect\astroncite{Gandhi et~al.}{2013}]{GDHS13}
Gandhi, A., Doroudi, S., Harchol-Balter, M., and Scheller-Wolf, A. (2013).
\newblock {Exact analysis of the M/M/k/setup class of Markov chains via
  recursive renewal reward}.
\newblock In {\em Proc. SIGMETRICS '13}, volume~41, page 153.

\bibitem[\protect\astroncite{Liggett}{1985}]{L85}
Liggett, T.~M. (1985).
\newblock {\em {Interacting Particle Systems}}.
\newblock Springer New York.

\bibitem[\protect\astroncite{Lin et~al.}{2012}]{LLWA12}
Lin, M., Liu, Z., Wierman, A., and Andrew, L. L.~H. (2012).
\newblock {Online algorithms for geographical load balancing}.
\newblock In {\em Proc. IGCC'12}, pages 1--10, Washington, DC.

\bibitem[\protect\astroncite{Lin et~al.}{2013}]{LWAT13}
Lin, M., Wierman, A., Andrew, L. L.~H., and Thereska, E. (2013).
\newblock {Dynamic right-sizing for power-proportional data centers}.
\newblock {\em IEEE/ACM Trans. Netw.}, 21(5):1378--1391.

\bibitem[\protect\astroncite{Liu et~al.}{2012}]{LCBWGWMH12}
Liu, Z., Chen, Y., Bash, C., Wierman, A., Gmach, D., Wang, Z., Marwah, M., and
  Hyser, C. (2012).
\newblock {Renewable and cooling aware workload management for sustainable data
  centers}.
\newblock {\em ACM SIGMETRICS Perform. Eval. Rev.}, 40(1):175--186.

\bibitem[\protect\astroncite{Liu et~al.}{2011a}]{LLWLA11b}
Liu, Z., Lin, M., Wierman, A., Low, S.~H., and Andrew, L. L.~H. (2011a).
\newblock {Geographical load balancing with renewables}.
\newblock {\em ACM SIGMETRICS Perform. Eval. Rev.}, 39(3):62--66.

\bibitem[\protect\astroncite{Liu et~al.}{2011b}]{LLWLA11a}
Liu, Z., Lin, M., Wierman, A., Low, S.~H., and Andrew, L. L.~H. (2011b).
\newblock {Greening geographical load balancing}.
\newblock In {\em Proc. SIGMETRICS '11}, pages 233--244.

\bibitem[\protect\astroncite{Lu et~al.}{2011}]{LXKGLG11}
Lu, Y., Xie, Q., Kliot, G., Geller, A., Larus, J.~R., and Greenberg, A. (2011).
\newblock {Join-idle-queue: a novel load balancing algorithm for dynamically
  scalable web services}.
\newblock In {\em Perform. Eval.}, volume~68, pages 1056--1071.

\bibitem[\protect\astroncite{Meyn and Tweedie}{1993}]{MT93}
Meyn, S.~P. and Tweedie, R.~L. (1993).
\newblock {\em {Markov Chains and Stochastic Stability}}.
\newblock Springer London.

\bibitem[\protect\astroncite{Mitzenmacher}{2001}]{Mitzenmacher01}
Mitzenmacher, M. (2001).
\newblock {The power of two choices in randomized load balancing}.
\newblock {\em IEEE Trans. Parallel Distrib. Syst.}, 12(10):1094--1104.

\bibitem[\protect\astroncite{Mukherjee et~al.}{2016a}]{MBLW16-1}
Mukherjee, D., Borst, S.~C., van Leeuwaarden, J. S.~H., and Whiting, P.~A.
  (2016a).
\newblock {Universality of load balancing schemes on the diffusion scale}.
\newblock {\em J. Appl. Probab.}, 53(4).

\bibitem[\protect\astroncite{Mukherjee et~al.}{2016b}]{MBLW16-3}
Mukherjee, D., Borst, S.~C., van Leeuwaarden, J. S.~H., and Whiting, P.~A.
  (2016b).
\newblock {Universality of power-of-d load balancing in many-server systems}.
\newblock {\em arXiv:1612.00723}.

\bibitem[\protect\astroncite{Mukherjee et~al.}{2017}]{MDBL17}
Mukherjee, D., Dhara, S., Borst, S.~C., and van Leeuwaarden, J.~S. (2017).
\newblock {Optimal service elasticity in large-scale distributed systems}.
\newblock {\em Proc. ACM Meas. Anal. Comput. Syst.}, 1(1):25.

\bibitem[\protect\astroncite{Nguyen and Stolyar}{2016}]{NS16}
Nguyen, L.~M. and Stolyar, A.~L. (2016).
\newblock {A service system with randomly behaving on-demand Aagents}.
\newblock {\em ACM SIGMETRICS Perform. Eval. Rev.}, 44(1):365--366.

\bibitem[\protect\astroncite{Pang and Stolyar}{2016}]{PS16}
Pang, G. and Stolyar, A.~L. (2016).
\newblock {A service system with on-demand agent invitations}.
\newblock {\em Queueing Syst.}, 82(3-4):259--283.

\bibitem[\protect\astroncite{Pender and Phung-Duc}{2016}]{PP16}
Pender, J. and Phung-Duc, T. (2016).
\newblock {A law of large numbers for M/M/c/delayoff-setup queues with
  nonstationary arrivals}.
\newblock {\em Proc. ASMTA '16}, pages 253--268.

\bibitem[\protect\astroncite{Rybko and Stolyar}{1992}]{RS92}
Rybko, A.~N. and Stolyar, A.~L. (1992).
\newblock {Ergodicity of stochastic processes describing the operation of open
  queueing networks}.
\newblock {\em Problems of Information Transmission}, 28:199--200.

\bibitem[\protect\astroncite{Shneer and Stolyar}{2017}]{SS17}
Shneer, S. and Stolyar, A. (2017).
\newblock {Stability conditions for a discrete-time decentralised medium access
  algorithm}.
\newblock {\em arXiv: 1707.01548}.

\bibitem[\protect\astroncite{Stolyar}{1995}]{S95}
Stolyar, A.~L. (1995).
\newblock {On the stability of multiclass queueing networks: a relaxed
  sufficient condition via limiting fluid processes}.
\newblock {\em Markov Processes Relat.}, 1(4):491--512.

\bibitem[\protect\astroncite{Stolyar}{2015}]{Stolyar15}
Stolyar, A.~L. (2015).
\newblock {Pull-based load distribution in large-scale heterogeneous service
  systems}.
\newblock {\em Queueing Syst.}, 80(4):341--361.

\bibitem[\protect\astroncite{Stolyar}{2017}]{Stolyar17}
Stolyar, A.~L. (2017).
\newblock {Pull-based load distribution among heterogeneous parallel servers:
  the case of multiple routers}.
\newblock {\em Queueing Syst.}, 85(1):31--65.

\bibitem[\protect\astroncite{Urgaonkar et~al.}{2010}]{UKIN10}
Urgaonkar, R., Kozat, U.~C., Igarashi, K., and Neely, M.~J. (2010).
\newblock {Dynamic resource allocation and power management in virtualized data
  centers}.
\newblock In {\em Proc. IEEE/NOMS 2010}, pages 479--486.

\bibitem[\protect\astroncite{van~der Boor et~al.}{2018}]{BBLM18}
van~der Boor, M., Borst, S.~C., van Leeuwaarden, J. S.~H., and Mukherjee, D.
  (2018).
\newblock {Scalable load balancing in networked systems: Universality
  properties and stochastic coupling methods}.
\newblock In {\em Proc. ICM '18}.

\bibitem[\protect\astroncite{Vvedenskaya et~al.}{1996}]{VDK96}
Vvedenskaya, N.~D., Dobrushin, R.~L., and Karpelevich, F.~I. (1996).
\newblock {Queueing system with selection of the shortest of two queues: An
  asymptotic approach}.
\newblock {\em Problemy Peredachi Informatsii}, 32(1):20--34.

\bibitem[\protect\astroncite{Wierman et~al.}{2012}]{WLT12}
Wierman, A., Andrew, L. L.~H., and Tang, A. (2012).
\newblock {Power-aware speed scaling in processor sharing systems: optimality
  and robustness}.
\newblock {\em Perform. Eval.}, 69(12):601--622.

\bibitem[\protect\astroncite{Ying}{2017}]{Ying17}
Ying, L. (2017).
\newblock {Stein's method for mean field approximations in light and heavy
  traffic regimes}.
\newblock {\em Proc. ACM Meas. Anal. Comput. Syst.}, 1(1):12.

\end{thebibliography}

\end{document}